\definecolor{ForestGreen}{rgb}{0.1,0.6,0.05}
\definecolor{EgyptBlue}{rgb}{0.063,0.1,0.6}
\definecolor{RipeOlive}{HTML}{556B2F}
\renewcommand*{\backref}[1]{}
\renewcommand*{\backrefalt}[4]
{%
	\ifcase #1 (Not cited.)%
	\or        (Cited on page~#2)
	\else      (Cited on pages~#2)
	\fi
}
\newtheorem{theorem}{Theorem}
\newtheorem{proposition}[theorem]{Proposition}
\newtheorem{lemma}[theorem]{Lemma}
\newtheorem{remark}[theorem]{Remark}
\numberwithin{equation}{section}
\numberwithin{theorem}{section}
\DeclareMathOperator*{\esssup}{ess\,sup}
\newcommand{\qed}{{\unskip\nobreak\hfil%
		\penalty50\hskip .001pt\hbox{}\nobreak\hfil
		\vrule height 1.7ex width .9ex depth .2ex
		\parfillskip=0pt\finalhyphendemerits=0\medbreak}\rm}
\newenvironment{proof}{\begin{trivlist}\item[\hskip%
		\labelsep{{\em Proof.}\ }]\rm}%
	{\hfill\qed\rm\end{trivlist}}
\newenvironment{proof*}[1]{\begin{trivlist}\item[\hskip%
		\labelsep{{\bf Proof of \/{\rm\bf #1.}}\quad}]\rm}%
	{\hfill\qed\rm\end{trivlist}}
\newcommand{\W}{W_0^{1,p}}
\newcommand{\intO}{\int_\Omega}
\newcommand{\C}{C^1_0(\overline{\Omega})}
\newcommand{\E}{E_{\alpha,\beta}}
\title{On sign-changing solutions for $(p,q)$-Laplace equations\\ with two parameters
	\footnote{2010 Mathematics Subject Classification: 35J62, 35J20, 35P30}}
\author{ 
	\normalsize Vladimir Bobkov\\ 
	{\small  Institute of Mathematics, Ufa Scientific Center, Russian Academy of Sciences}\\
	\small{Chernyshevsky str. 112, Ufa 450008, Russia} \\
	{\small  Department of Mathematics and NTIS, Faculty of Applied Sciences, University of West Bohemia}\\ 
	{\small Univerzitn\'i 8, Plze\v{n} 306 14, Czech Republic}\\
	{\small e-mail: bobkov@kma.zcu.cz}\\[0.5em] 
	\normalsize Mieko Tanaka\\
	{\small Department of  Mathematics, 
		Tokyo University of Science}\\
	{\small Kagurazaka 1-3, Shinjyuku-ku, Tokyo 162-8601, Japan}\\
	{\small e-mail: miekotanaka@rs.tus.ac.jp}
}
\date{}
\begin{document}
	\maketitle 
	
\begin{abstract}
	We investigate the existence of nodal (sign-changing) solutions to the Dirichlet problem for a two-parametric family of partially homogeneous $(p,q)$-Laplace equations 
	$-\Delta_p u -\Delta_q u=\alpha |u|^{p-2}u+\beta |u|^{q-2}u$
	where $p \neq q$. By virtue of the Nehari manifolds, the linking theorem, and descending flow, we explicitly characterize subsets of the $(\alpha,\beta)$-plane which correspond to the existence of nodal solutions. In each subset the obtained solutions have prescribed signs of energy and, in some cases, exactly two nodal domains. The nonexistence of nodal solutions is also studied. 
	Additionally, we explore several relations between eigenvalues and eigenfunctions of the $p$- and $q$-Laplacians in one dimension.
	
	\par
	\smallskip
	\noindent {\bf  Keywords}:\  $(p,q)$-Laplacian,\
	$p$-Laplacian,\
	eigenvalue problem,\ 
	first eigenvalue,\ 
	second eigenvalue,\ 
	nodal solutions,\ 
	sign-changing solutions,\ 
	Nehari manifold,\ linking theorem,\ descending flow. 
\end{abstract}

\section{Introduction}

In this article we study the existence and nonexistence of sign-changing solutions for the problem
\begin{equation*}
\tag{$GEV;\alpha,\beta$}
\left\{
\begin{aligned}
-&\Delta_p u  -  \Delta_q u = \alpha |u|^{p-2} u + \beta |u|^{q-2} u 
&&\text{in } \Omega, \\[0.4em]
&u = 0  &&\text{on } \partial \Omega,
\end{aligned}
\right.
\end{equation*}	
where $\Omega \subset \mathbb{R}^N$, $N \geq 1$, is a bounded domain with a sufficiently smooth boundary $\partial \Omega$, and  $\alpha,\beta \in \mathbb{R}$ are parameters.
The operator $\Delta_r u := \text{div}(|\nabla u|^{r-2} \nabla u)$ is the classical $r$-Laplacian, $r = \{q, p\} > 1$, and without loss of generality we assume that $q < p$. 

Boundary value problems with a combination of several differential operators of different nature (in particular, as in $(GEV;\alpha,\beta)$) arise mainly as mathematical models of physical processes and phenomena, and have been extensively studied in the last two decades, see, e.g., \cite{CherIl, quorin, he, perera} and the references below. Among the historically first examples one can mention the Cahn--Hilliard equation \cite{cahnhill} describing the process of separation of binary alloys, and the Zakharov equation \cite[(1.8)]{zakharov} which describes the behavior of plasma oscillations.
Elliptic equations with the $(2,6)$- and $(2,p)$-Laplacians were considered  explicitly in \cite{BenciSoliton,BenciDerrick} with the aim of obtaining soliton-type solutions (in particular, as a model for elementary particles). 

The considered problem $(GEV;\alpha,\beta)$ attracts special attention due to its symmetric and partially homogeneous structure, cf. \cite{T-2014,T-Uniq,MT,KTT,ZK,BobkovTanaka2015,barileeig}. 
By developing the results of \cite{T-2014,MT,KTT}, the authors of the present article obtained in \cite{BobkovTanaka2015} a reasonably complete description of the subsets of the $(\alpha, \beta)$-plane which correspond to the existence/nonexistence of \textit{positive} solutions to the problem  $(GEV;\alpha,\beta)$.
At the same time, to the best of our knowledge, analogous results for \textit{sign-changing} solutions have not been obtained circumstantially so far, although a particular information on the existence can be extracted from \cite{marano2013, T-Uniq, papa2}. 
The main reason for this is a crucial dependence of the structure of the solution set to the problem $(GEV;\alpha,\beta)$ on parameters $\alpha$ and $\beta$. As a consequence, the existence can not be treated by a unique approach, and various tools have to be used for different parts of the $(\alpha, \beta)$-plane.

The aim of the present article is to allocate and characterize the sets of parameters $\alpha$ and $\beta$ for which the problem $(GEV;\alpha,\beta)$ possesses or does not possess sign-changing solutions (see Figure~\ref{fig1}). 
In this sense, this work can be seen as the second part of the article \cite{BobkovTanaka2015}.

\subsection{Notations and preliminaries}\label{subsec:notations} 
Before formulating the main results, we introduce several notations. 
In what follows, $L^r(\Omega)$ with $r \in (1,+\infty)$ and $L^\infty(\Omega)$ stand for the Lebesgue spaces with the norms 
$$
\|u\|_r := \left( \int_\Omega |u|^r \, dx \right)^{1/r} \quad \text{and} \quad 
\|u\|_\infty:= \esssup\limits_{x\in\Omega}|u(x)|,
$$ 
respectively, and  $W_0^{1,r}:=W_0^{1,r}(\Omega)$ denotes the Sobolev space with the norm $\|\nabla u \|_r$. For $u \in W_0^{1,r}$ we define 
$u^\pm := \max\{\pm u,0\}$. Note that $u^\pm \in W_0^{1,r}$ and $u=u^+-u^-$.

By a (weak) solution of $(GEV;\alpha,\beta)$ we mean function $u \in \W$ which satisfies
\begin{equation}\label{weaksolution}
\intO |\nabla u|^{p-2}\nabla u\nabla\varphi \,dx 
+\intO |\nabla u|^{q-2}\nabla u\nabla\varphi \,dx 
= \alpha \intO |u|^{p-2}u \, \varphi \, dx + \beta \intO |u|^{q-2}u \, \varphi\,dx
\end{equation}
for all $\varphi\in \W$. 
If $u$ is a solution of $(GEV;\alpha, \beta)$ and $u^\pm \not\equiv 0$ (a.e.\ in $\Omega$), then $u$ is called \textit{nodal} or \textit{sign-changing} solution. 
It is not hard to see that \textit{any}  solution of $(GEV;\alpha, \beta)$  is a critical point of the energy functional $E_{\alpha,\beta} \in C^1(\W, \mathbb{R})$ defined by
\begin{equation*}
E_{\alpha, \beta}(u) := \frac{1}{p} H_\alpha(u) + \frac{1}{q} G_\beta(u), 
\end{equation*}
where 
\begin{equation*}
H_\alpha(u) := \int_\Omega |\nabla u|^p \, dx - \alpha \int_\Omega |u|^p \,dx 
\quad \text{ and } \quad 
G_\beta(u) := \int_\Omega |\nabla u|^q \, dx - \beta \int_\Omega |u|^q \,dx. 
\end{equation*}
Notice that the supports of $u^+$ and $u^-$ are disjoint for any $u \in \W$. This fact, together with evenness  of the functionals $H_\alpha$ and $G_\beta$, easily implies that 
\begin{equation*}
H_\alpha(u^+) + H_\alpha(u^-) = H_\alpha(u)
\quad \text{ and } \quad 
G_\beta(u^+)+G_\beta(u^-)=G_\beta(u).
\end{equation*}

\begin{remark} 
	Any solution $u \in \W$ of the problem $(GEV;\alpha, \beta)$ belongs to $C^{1,\gamma}_0(\overline{\Omega})$ for some $\gamma \in (0,1)$. 
	In fact, $u\in L^\infty(\Omega)$ by the Moser iteration process, cf. \cite[Appendix~A]{MMT}. Furthermore, the regularity up to the boundary in \cite[Theorem~1]{Lieberman} and \cite[p.~320]{L} provides $u \in
	C^{1,\gamma}_0(\overline{\Omega})$, $\gamma \in (0,1)$.
\end{remark}

Next, we recall several facts related to the eigenvalue problem for the Dirichlet $r$-Laplacian, $r>1$. 
We say that $\lambda$ is an \textit{eigenvalue} of $-\Delta_r$, if the problem
\begin{equation*}
\tag{$EV;r,\lambda$}
\left\{
\begin{aligned}
-&\Delta_r u =\lambda |u|^{r-2}u
&& \text{in } \Omega, \\
&u=0 && \text{on } \partial \Omega 
\end{aligned}
\right.
\end{equation*}
has a nontrivial (weak) solution. 
Analogously to the linear case, the set of all eigenvalues of $(EV;r,\lambda)$ will be denoted as  $\sigma(-\Delta_r)$. 
It is well known that the lowest positive eigenvalue $\lambda_1(r)$ can be obtained through the nonlinear Rayleigh quotient as (cf. \cite{anane1987})
\begin{equation}\label{charact-1st-ev}
\lambda_1(r)
:=\inf\left\{\frac{\intO |\nabla u|^r\,dx}{\intO |u|^r\,dx}:~ u \in W_0^{1,r},~ u \not\equiv 0\right\}.
\end{equation}
The eigenvalue $\lambda_1(r)$ is simple and isolated, and the corresponding eigenfunction $\varphi_r \in \W$ (defined up to an arbitrary multiplier) is strictly positive (or strictly negative) in $\Omega$. 
Moreover, $\lambda_1(r)$ is the unique eigenvalue with a corresponding  sign-constant eigenfunction \cite{anane1987}. Note also that any eigenfunction $\varphi$ of $-\Delta_r$ belongs to $C^{1,\gamma}_0(\overline{\Omega})$ for some $\gamma \in (0,1)$.

The following lemma directly follows from the definition of $\lambda_1(r)$ and its simplicity.
\begin{lemma}\label{lem:1}
Assume that $u \in \W\setminus\{0\}$. Then we have the following results:
\begin{itemize}
\item[{\rm (i)}] 
Let $\alpha \leq \lambda_1(p)$. Then $H_\alpha(u) \geq 0$, and $H_\alpha(u) = 0$ if and only if $\alpha = \lambda_1(p)$ and $u = t\varphi_p$ 
for some $t\in\mathbb{R}\setminus\{0\}$.
\item[{\rm (ii)}] 
Let $\beta \leq \lambda_1(q)$. Then $G_\beta(u) \geq 0$, and $G_\beta(u) = 0$ if and only if $\beta = \lambda_1(q)$ and $u = t\varphi_q$ 
for some $t\in\mathbb{R}\setminus\{0\}$.
\end{itemize}
\end{lemma}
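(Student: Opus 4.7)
The statement follows quickly from the Rayleigh characterization \eqref{charact-1st-ev} together with the simplicity of the first eigenvalue stated just above the lemma, so I would prove (i) directly and then remark that (ii) is obtained by replacing $p$ with $q$ throughout.

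For part (i), my plan is to start from \eqref{charact-1st-ev} applied with $r=p$, which yields
\begin{equation*}
\intO |\nabla u|^p\,dx \;\geq\; \lambda_1(p)\intO |u|^p\,dx \quad \text{for every } u\in\W.
\end{equation*}
Subtracting $\alpha\intO |u|^p\,dx$ from both sides and using $\alpha\leq \lambda_1(p)$ gives
\begin{equation*}
H_\alpha(u) \;\geq\; \bigl(\lambda_1(p)-\alpha\bigr)\intO |u|^p\,dx \;\geq\; 0,
\end{equation*}
which establishes the first claim of (i).

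Next I would analyze the equality case. If $H_\alpha(u)=0$ for some $u\not\equiv 0$, the chain of inequalities above forces $(\lambda_1(p)-\alpha)\intO|u|^p\,dx=0$; since $u\not\equiv 0$ implies $\intO|u|^p\,dx>0$, we must have $\alpha=\lambda_1(p)$. Plugging this back in gives $\intO|\nabla u|^p\,dx=\lambda_1(p)\intO|u|^p\,dx$, i.e.\ $u$ is a minimizer of the Rayleigh quotient in \eqref{charact-1st-ev}, hence an eigenfunction associated with $\lambda_1(p)$. Simplicity of $\lambda_1(p)$, quoted just before the lemma, forces $u=t\varphi_p$ for some $t\in\mathbb{R}\setminus\{0\}$. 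The converse implication (that $\alpha=\lambda_1(p)$ together with $u=t\varphi_p$ gives $H_\alpha(u)=0$) is a direct substitution using $-\Delta_p\varphi_p=\lambda_1(p)|\varphi_p|^{p-2}\varphi_p$ tested against $\varphi_p$.

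No step is a genuine obstacle here: the whole argument is essentially a restatement of the definition of $\lambda_1(p)$ combined with its simplicity, and the only care needed is to separate the two sub-cases $\alpha<\lambda_1(p)$ and $\alpha=\lambda_1(p)$ when discussing when equality can hold. Part (ii) is then verbatim the same argument with $(p,\alpha,\varphi_p,H_\alpha)$ replaced by $(q,\beta,\varphi_q,G_\beta)$, so I would simply state it by analogy rather than rewrite it.
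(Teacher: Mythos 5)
Your argument is correct and is exactly the route the paper intends: the paper gives no written proof, stating only that the lemma ``directly follows from the definition of $\lambda_1(r)$ and its simplicity,'' which is precisely the Rayleigh-quotient inequality plus the equality analysis via simplicity that you carry out. Nothing to add.
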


Although the structure of $\sigma(-\Delta_r)$ is not completely known except for the case $r=2$ or $N=1$ (see, e.g., \cite[Theorem~3.1]{drabman}), several unbounded sequences of eigenvalues can be introduced by virtue of minimax variational principles. In what follows, by
$\{\lambda_k(r)\}_{k \in \mathbb{N}}$ we denote a sequence of eigenvalues for $(EV; r, \lambda)$ introduced in \cite{DR}. It can be described variationally as 
\begin{equation}\label{lambda_n}
\lambda_k(r) := \inf_{h\in\mathscr{F}_k(r)} \max_{z\in S^{k-1}} 
\|\nabla h(z)\|_r^r, 
\end{equation}
where $S^{k-1}$ is the unit sphere in $\mathbb{R}^k$ and
\begin{align} 
\label{F_n} 
\mathscr{F}_k(r)
&:=\{ h\in C(S^{k-1},S(r)):~ h \text{ is odd}\},
\\
\notag
S(r) 
&:=\{u\in W_0^{1,r}:~ \|u\|_r =1\}.
\end{align} 
It is known \cite{DR} that $\lambda_k(r) \to +\infty$ as $k \to +\infty$. 
Moreover, $\lambda_2(r)$ coincides with the second eigenvalue of 
$-\Delta_r$, i.e.,
$$
\lambda_2(r) = \inf\{\lambda \in \sigma(-\Delta_r):~ \lambda > \lambda_1(r)\},
$$
and it can be alternatively characterized as in \cite{cuesta}: 
\begin{gather}
\label{second:mp}
\lambda_2(r) =\inf_{\gamma\in\Gamma}\max_{s \in [0,1]} 
\|\nabla \gamma(s)\|_r^r,
\\ 
\notag
\Gamma:=\{\gamma\in C([0,1], S(r)):~ \gamma(0) = \varphi_r,\ 
\gamma(1) = -\varphi_r \},
\end{gather}
where the first eigenfunction  $\varphi_r$ is normalized such that $\varphi_r \in S(r)$. 
We denote any eigenfunction corresponding to $\lambda_2(r)$ as $\varphi_{2,r}$. 
Notice that $\lambda_2(r) > \lambda_1(r)$. 
Furthermore, in the one-dimensional case the sequence \eqref{lambda_n} describes the whole $\sigma(-\Delta_r)$ (cf. \cite[Theorem~4.1]{drabman}, where this result is proved for the Krasnosel'skii-type eigenvalues).

Finally, we introduce the notation for the eigenspace of $-\Delta_r$ at $\lambda \in \mathbb{R}$:
\begin{equation}\label{def:eigenspace}
ES(r;\lambda)
:=\{v\in W_0^{1,r}:~ v \text{ is a  solution of } (EV;r,\lambda)\,\}.
\end{equation}
It is clear that $ES(r;\lambda) \neq \{0\}$ if and only if 
$\lambda \in \sigma(-\Delta_r)$.

\subsection{Main results}\label{subsec:main}
Let us state the main results of this article. 
We begin with the nonexistence of nodal solutions for $(GEV; \alpha, \beta)$. 
\begin{theorem}\label{thm:nonexist} 
Assume that 
$$
(\alpha,\beta)\in (-\infty,\lambda_2(p)]\times(-\infty,\lambda_1(q)]
\cup (-\infty,\lambda_1(p)]\times(-\infty,\lambda_2(q)]. 
$$
Then $(GEV;\alpha,\beta)$ has no nodal solutions. 
\end{theorem}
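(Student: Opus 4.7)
I would argue by contradiction, aiming to violate the minimax characterization~\eqref{second:mp}. Suppose $u$ is a nodal solution with $(\alpha,\beta)$ in the stated region; testing~\eqref{weaksolution} with $\varphi=u^+$ and with $\varphi=u^-$ and using the disjointness of the supports of $u^\pm$ gives
\begin{equation*}
H_\alpha(u^+)+G_\beta(u^+) \,=\, 0 \quad\text{and}\quad H_\alpha(u^-)+G_\beta(u^-) \,=\, 0.
\end{equation*}
I will treat the sub-region $\alpha\le\lambda_2(p)$, $\beta\le\lambda_1(q)$ in detail; the other is verbatim after the interchanges $p\leftrightarrow q$, $\alpha\leftrightarrow\beta$ (using Lemma~\ref{lem:1}(i) in place of (ii), and working in $S(q)$ instead of $S(p)$).

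By Lemma~\ref{lem:1}(ii), $G_\beta(u^\pm)\ge 0$. Equality would force $\beta=\lambda_1(q)$ and $u^\pm$ to be a non-zero multiple of $\varphi_q$; since $\varphi_q>0$ in $\Omega$, this contradicts $u^\mp\not\equiv 0$. Hence $G_\beta(u^\pm)>0$, and the identities above yield the strict Rayleigh bounds
\begin{equation*}
\|\nabla u^\pm\|_p^p \,=\, \alpha\,\|u^\pm\|_p^p - G_\beta(u^\pm) \,<\, \alpha\,\|u^\pm\|_p^p \,\le\, \lambda_2(p)\,\|u^\pm\|_p^p.
\end{equation*}

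The main step is then to convert these two strict bounds into a competitor in~\eqref{second:mp} that beats the infimum. Normalizing $\|\varphi_p\|_p=1$ and setting $w^\pm := u^\pm/\|u^\pm\|_p$, I would build a continuous path $\gamma:[0,1]\to S(p)$ from $\varphi_p$ to $-\varphi_p$ by concatenating three arcs. The middle arc $\eta(t):=(1-t)^{1/p}w^+ - t^{1/p}w^-$, $t\in[0,1]$, runs from $w^+$ to $-w^-$; disjoint supports give at once $\|\eta(t)\|_p=1$ and $\|\nabla\eta(t)\|_p^p = (1-t)\|\nabla w^+\|_p^p + t\|\nabla w^-\|_p^p$, which is strictly below $\lambda_2(p)$ by the previous inequality. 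The outer arcs
\begin{equation*}
v_s \,:=\, \bigl((1-s)\varphi_p^p + s(w^+)^p\bigr)^{1/p} \quad \text{from } \varphi_p \text{ to } w^+,
\end{equation*}
and its mirror image from $-w^-$ to $-\varphi_p$, have $\|v_s\|_p=1$ automatically, and the hidden-convexity inequality for the $p$-Dirichlet integral yields $\|\nabla v_s\|_p^p \le (1-s)\lambda_1(p) + s\,\|\nabla w^+\|_p^p < \lambda_2(p)$. Concatenation produces a path in the admissible class of~\eqref{second:mp} whose maximum $p$-energy is strictly below $\lambda_2(p)$, the desired contradiction.

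The step I expect to be most delicate is the hidden-convexity bound used on the outer arcs, namely $\int_\Omega |\nabla\sigma|^p\,dx \le (1-s)\int_\Omega |\nabla u_0|^p\,dx + s\int_\Omega |\nabla u_1|^p\,dx$ for nonnegative $u_0,u_1\in W_0^{1,p}(\Omega)$ with $\sigma := ((1-s)u_0^p+su_1^p)^{1/p}$. The pointwise estimate is a routine chain-rule-plus-H\"older calculation on the explicit expression for $\nabla\sigma$, but applying it with $u_1=w^+$, which vanishes on the positive-measure set where $u^-$ is supported, requires a short approximation argument (e.g.\ replacing $w^+$ by $w^+ + \varepsilon\varphi_p$ and passing to the limit as $\varepsilon\to 0^+$). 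Once this is in place, the rest is elementary bookkeeping.
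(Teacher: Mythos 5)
Your argument is correct, and up to its last step it is the same as the paper's. The paper likewise reduces to the two Nehari identities $H_\alpha(u^\pm)+G_\beta(u^\pm)=0$, uses Lemma~\ref{lem:1} together with the strict positivity of the first eigenfunction to upgrade $G_\beta(u^\pm)\ge 0$ to $G_\beta(u^\pm)>0$, and concludes $H_\alpha(u^\pm)<0$, i.e.\ that both Rayleigh quotients of $u^+$ and $u^-$ lie strictly below $\alpha\le\lambda_2(p)$; this is packaged as Lemmas~\ref{lem:easy-fact}, \ref{lem:easy-fact2} and \ref{lemma:emptinessM}. Where you diverge is in extracting the contradiction: the paper invokes the characterization \eqref{eq:char-2nd-ev} of $\lambda_2(r)$ as the infimum of $\max$ of the two Rayleigh quotients over sign-changing functions, quoted from \cite[Proposition~4.2]{BobkovEJQT2014}, while you re-prove precisely that fact for the function at hand by exhibiting an admissible competitor in the mountain-pass characterization \eqref{second:mp} of \cite{cuesta}. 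Your three-arc path (with the middle arc exploiting disjoint supports and the outer arcs controlled by hidden convexity of the $p$-Dirichlet integral along $s\mapsto((1-s)u_0^p+su_1^p)^{1/p}$) is essentially how \cite[Proposition~4.2]{BobkovEJQT2014} is itself proved, so what you buy is self-containedness at the cost of importing the hidden-convexity inequality, which is a genuine external input of comparable weight (it holds for all nonnegative $u_0,u_1\in \W$, so your $\varepsilon\varphi_p$ regularization on the outer arcs is a harmless extra precaution rather than a necessity). Both routes are sound; the paper's is shorter only because it cites the nodal characterization of $\lambda_2$ ready-made.
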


In the one-dimensional case  Theorem~\ref{thm:nonexist} can be refined as follows.
\begin{theorem}\label{nonexist:N1} 
Let $N=1$. If $(\alpha,\beta)\in (-\infty,\lambda_2(p)]\times 
(-\infty,\lambda_2(q)]$, 
then $(GEV;\alpha,\beta)$ has no nodal solutions. 
\end{theorem}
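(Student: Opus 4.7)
The plan is to assume a nodal solution $u$ of $(GEV;\alpha,\beta)$ exists under the stated hypothesis and derive a contradiction by isolating the equation on each nodal subinterval and invoking the one-dimensional scaling $\lambda_k(r)=k^r\lambda_1(r)$.

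First, since $u\in C^1_0(\overline{\Omega})$ by the regularity remark in Section~\ref{subsec:notations}, in one dimension its nodal set decomposes $\Omega$ into $k\geq 2$ pairwise disjoint open subintervals $I_1,\dots,I_k$, with $u$ vanishing on each $\partial I_j$ and $\sum_j|I_j|=L:=|\Omega|$. For each $j$, continuity of $u$ together with $u=0$ on $\partial I_j$ implies $u\chi_{I_j}\in\W$ (a step specific to $N=1$: in higher dimensions the product $u\chi_{I_j}$ would not be weakly differentiable). Testing \eqref{weaksolution} with $\varphi=u\chi_{I_j}$ yields
\begin{equation*}
\int_{I_j}|u'|^p\,dx + \int_{I_j}|u'|^q\,dx = \alpha\int_{I_j}|u|^p\,dx + \beta\int_{I_j}|u|^q\,dx.
\end{equation*}
Combining this with the Rayleigh bound $\int_{I_j}|u'|^r\,dx\geq \lambda_1(r,I_j)\int_{I_j}|u|^r\,dx$ (valid since $u|_{I_j}\in W_0^{1,r}(I_j)$ for $r\in\{p,q\}$) gives
\begin{equation*}
\bigl(\alpha-\lambda_1(p,I_j)\bigr)\|u\|_{p,I_j}^p + \bigl(\beta-\lambda_1(q,I_j)\bigr)\|u\|_{q,I_j}^q \geq 0.
\end{equation*}

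Next, I would invoke the one-dimensional formula $\lambda_1(r,(0,\ell))=\pi_r^{\,r}/\ell^{\,r}$, equivalently $\lambda_k(r,\Omega)=k^r\lambda_1(r,\Omega)$, so that $|I_j|\leq L/2$ is equivalent to $\lambda_1(r,I_j)\geq\lambda_2(r,\Omega)$ for both $r=p,q$ \emph{simultaneously}, with equality iff $|I_j|=L/2$. Because $k\geq 2$, there exists some $j^*$ with $|I_{j^*}|\leq L/2$. For that $j^*$, the hypotheses $\alpha\leq\lambda_2(p)$ and $\beta\leq\lambda_2(q)$ make both coefficients above nonpositive, so both terms must actually vanish. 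Since $u|_{I_{j^*}}\not\equiv 0$, this forces $|I_{j^*}|=L/2$, $\alpha=\lambda_2(p)$, $\beta=\lambda_2(q)$, and the Rayleigh quotient of $u|_{I_{j^*}}$ attains its infimum $\lambda_1(r,I_{j^*})$ simultaneously for $r=p$ and $r=q$. In particular $u|_{I_{j^*}}$ is, up to scaling, both the first $p$-Laplace and the first $q$-Laplace eigenfunction on $I_{j^*}$.

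The main obstacle is closing this residual case: ruling out that on one and the same interval the first eigenfunctions of $-\Delta_p$ and $-\Delta_q$ are proportional when $p\neq q$. This non-proportionality is a separate fact about the one-dimensional $r$-sine functions and fits naturally into the paper's announced \emph{relations between eigenvalues and eigenfunctions of the $p$- and $q$-Laplacians in one dimension}; invoking such a statement (or proving it directly by comparing the ODEs the two putative eigenfunctions would have to satisfy simultaneously) yields the required contradiction and completes the proof.
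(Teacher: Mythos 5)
Your proof is correct and follows essentially the same route as the paper's: isolate a nodal interval of length at most $|\Omega|/2$, test the weak formulation with the restriction of $u$ to it, use the one-dimensional scaling of $\lambda_1(r,\cdot)$ to force equality in both Rayleigh quotients simultaneously, and conclude via the linear independence of $\varphi_p$ and $\varphi_q$, which is exactly Lemma~\ref{IV} of Appendix~A. (The only inaccuracy is your aside that $u\chi_{I_j}$ would fail to be weakly differentiable for $N\geq 2$ --- it is in fact in $W_0^{1,p}$ of a nodal domain in any dimension, cf.\ \cite[Lemma~5.6]{cuesta}; what is genuinely one-dimensional is the simultaneous bound $\lambda_1(r,I_{j^*})\geq\lambda_2(r,\Omega)$ for both exponents.)
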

In the case of general dimensions an additional information on hypothetical nodal solutions to $(GEV; \alpha, \beta)$ for $\alpha \in (\lambda_1(p), \lambda_2(p)]$ and $\beta \in (\lambda_1(q), \lambda_2(q)]$ is given in Lemma~\ref{lem:nonexist} below.

\begin{figure}[!h]
	\begin{center}
	\includegraphics[width=0.7\linewidth]{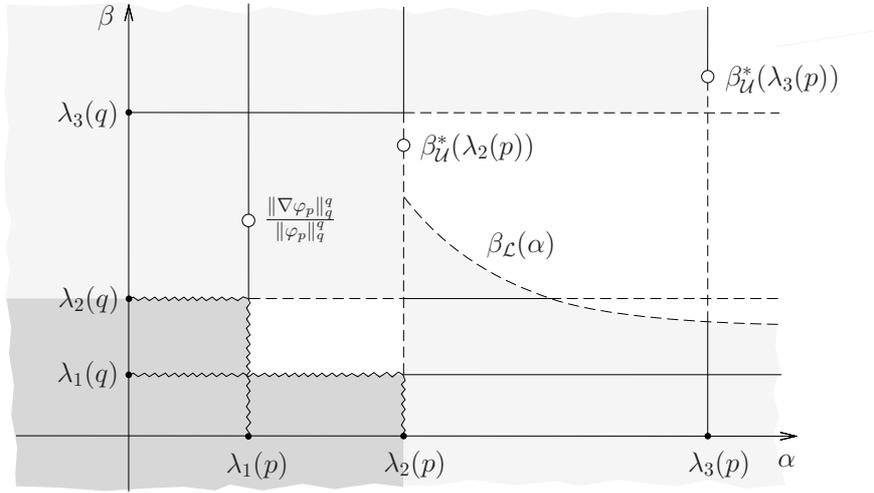}
	\caption{The case $\lambda_2(q) < \lambda_3(q)$, $\lambda_2(p) < \lambda_3(p)$, and $(\lambda_2(p), \lambda_3(p)) \cap \sigma(-\Delta_p) = \emptyset$. Existence (light gray, solid lines), nonexistence (dark gray, zigzag lines), unknown (white, dashed lines)}
	\label{fig1}
	\end{center}
\end{figure}

Now we formulate the existence result for nodal solutions with a positive energy. 
Let us define the following ``lower'' critical value depending on $\alpha \in \mathbb{R}$:
\begin{equation}\label{bK}
\beta_{\mathcal{L}}(\alpha) 
:= 
\inf \left\{\,
\min \left\{ \frac{\intO |\nabla u^+|^q \, dx}{\intO |u^+|^q \, dx}, 
\frac{\intO |\nabla u^-|^q \, dx}{\intO |u^-|^q \, dx} \right\}:~ u \in \mathcal{B}_{\mathcal{L}}(\alpha) \right\},
\end{equation}
where
\begin{equation}\label{BK}
\mathcal{B}_\mathcal{L}(\alpha) := 
\left\{u \in \W:~ u^\pm \not\equiv 0,~ \max 
\left\{\frac{\intO |\nabla u^+|^p \, dx}{\intO |u^+|^p \, dx}, \frac{\intO |\nabla u^-|^p \, dx}{\intO |u^-|^p \, dx} 
\right\} 
\leq \alpha
\right\},
\end{equation}
and put $\beta_\mathcal{L}(\alpha) = +\infty$ whenever the admissible set 
$\mathcal{B}_{\mathcal{L}}(\alpha)$ is empty. 
\begin{theorem}\label{thm:positive} 
	Let $\alpha > \lambda_2(p)$. Then for all
	$\beta < \beta_{\mathcal{L}}(\alpha)$ the problem $(GEV;\alpha,\beta)$ has a nodal solution $u$ with $\E(u)>0$ and  precisely two nodal domains. 
\end{theorem}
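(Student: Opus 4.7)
The plan is to construct $u$ as a minimizer of $\E$ on the \emph{nodal Nehari set}
\begin{equation*}
\mathcal{M} := \{w \in \W:\ w^\pm \not\equiv 0,\ H_\alpha(w^\pm) + G_\beta(w^\pm) = 0\}.
\end{equation*}
Every sign-changing critical point of $\E$ belongs to $\mathcal{M}$, since testing \eqref{weaksolution} against $w^\pm$ yields exactly these two identities; substituting them into $\E$ gives the useful representation
\begin{equation*}
\E(w) = \left(\tfrac{1}{q} - \tfrac{1}{p}\right)\bigl(G_\beta(w^+) + G_\beta(w^-)\bigr) \quad \text{for } w \in \mathcal{M}.
\end{equation*}
In particular, the sign of $\E$ on $\mathcal{M}$ is governed by the signs of $G_\beta(w^\pm)$.

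First I would verify that $\mathcal{M}$ is nonempty by a fiber projection. The second eigenfunction $\varphi_{2,p}$ of $-\Delta_p$ satisfies $\intO |\nabla \varphi_{2,p}^\pm|^p\,dx = \lambda_2(p)\intO |\varphi_{2,p}^\pm|^p\,dx$, so $\varphi_{2,p} \in \mathcal{B}_\mathcal{L}(\alpha)$ with $H_\alpha(\varphi_{2,p}^\pm) < 0$; the hypothesis $\beta < \beta_\mathcal{L}(\alpha)$ then forces $G_\beta(\varphi_{2,p}^\pm) > 0$. Since $q < p$, each of the scalar maps $t \mapsto \E(t\varphi_{2,p}^\pm)$ attains a unique global maximum at some $t_\pm > 0$, and $u_0 := t_+\varphi_{2,p}^+ - t_-\varphi_{2,p}^-$ belongs to $\mathcal{M}$ with $\E(u_0) > 0$. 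An analogous projection is available for any $w \in \W$ with $w^\pm \not\equiv 0$, $H_\alpha(w^\pm) < 0$, and $G_\beta(w^\pm) > 0$.

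Next I would set $c := \inf \E$ over the open subset $\mathcal{M}_{++} := \{w \in \mathcal{M}: H_\alpha(w^\pm) < 0\}$ and show that $c > 0$ is attained. A positive lower bound on $c$ follows by combining the Nehari identities with a Sobolev-type estimate and Lemma~\ref{lem:1} (applicable since $\alpha > \lambda_2(p) > \lambda_1(p)$), which rules out the degeneracy $H_\alpha(w^\pm), G_\beta(w^\pm) \to 0$. For a minimizing sequence $\{w_n\} \subset \mathcal{M}_{++}$, the Nehari constraints give $\W$-boundedness; passing to a weak limit $w_n \rightharpoonup u$, the compact embedding $\W \hookrightarrow L^p \cap L^q$ and weak lower semicontinuity yield $H_\alpha(u^\pm) + G_\beta(u^\pm) \leq 0$. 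If either inequality were strict, the downward fiber projection of $u$ would produce an element of $\mathcal{M}_{++}$ with energy below $c$, contradicting minimality; hence $u \in \mathcal{M}_{++}$, $\E(u) = c$, and the convergence is strong. A standard deformation argument, using smooth $\epsilon$-dependence of the two projection parameters attached to $u + \epsilon\varphi$ (via the implicit function theorem near $\epsilon = 0$), upgrades $u$ to a critical point of $\E$ on all of $\W$.

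Finally, for the exactly-two-nodal-domains claim, suppose $u$ has nodal domains $\Omega_1, \Omega_2, \Omega_3, \dots$ with $k \geq 3$, and set $u_i := u\,\chi_{\Omega_i} \in \W$. Testing \eqref{weaksolution} against $u_i$ gives $H_\alpha(u_i) + G_\beta(u_i) = 0$ for each $i$. Since $G_\beta(u^+)$ and $G_\beta(u^-)$ are both positive and decompose into at least three summands $G_\beta(u_i)$, one may select indices $i_+$ and $i_-$ of opposite sign so that $\sum_{j \neq i_\pm} G_\beta(u_j) > 0$; then $w := u_{i_+} + u_{i_-}$ lies in $\mathcal{M}_{++}$ and satisfies $\E(w) < \E(u) = c$, a contradiction. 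The main obstacle is the compactness step in the third paragraph: securing the uniform positive lower bound on $G_\beta(w^\pm)$ along minimizing sequences and preventing $w_n^+$ or $w_n^-$ from degenerating in the weak limit. This is precisely where the threshold $\beta < \beta_\mathcal{L}(\alpha)$ plays its essential role, separating Theorem~\ref{thm:positive} from the nonexistence result of Theorem~\ref{thm:nonexist}.
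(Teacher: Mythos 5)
Your overall strategy coincides with the paper's (minimize $\E$ over $\mathcal{M}^1_{\alpha,\beta}$, which is your $\mathcal{M}_{++}$, then upgrade to a critical point and count nodal domains), and your nonemptiness argument via $\varphi_{2,p}$ is sound. However, there are two genuine gaps. The first is the compactness step, which you yourself flag as ``the main obstacle'' but do not resolve. Your assertion that ``the Nehari constraints give $\W$-boundedness'' is false: on the Nehari set one has $\E(w^\pm)=\frac{p-q}{pq}G_\beta(w^\pm)=-\frac{p-q}{pq}H_\alpha(w^\pm)$, so bounded energy only bounds $H_\alpha(w_n^\pm)$ and $G_\beta(w_n^\pm)$; since $\alpha>\lambda_2(p)$ the quadratic-type form $H_\alpha$ is indefinite and its boundedness does not control $\|\nabla w_n^\pm\|_p$. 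The paper's Lemma~\ref{bdd-minimizing} shows that if $\|\nabla w_n^+\|_p\to+\infty$ then the normalized weak limit lands in the set $\mathcal{K}_{\alpha,\beta}$ of \eqref{def:K}, and Lemma~\ref{lem:bK}\,(iv) shows $\mathcal{K}_{\alpha,\beta}=\emptyset$ exactly when $\beta<\beta_{\mathcal{L}}(\alpha)$; the same emptiness is needed to prevent $\|\nabla w_n^\pm\|_p\to 0$ and to get $G_\beta(u^\pm)>0$ for the weak limit (without which your ``downward fiber projection'' does not exist). This mechanism is the entire content of the hypothesis $\beta<\beta_{\mathcal{L}}(\alpha)$ and must be supplied, not merely pointed at.

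The second gap is in the two-nodal-domains argument. Writing $u=u_1+u_2+u_3$ with $u^+=u_1+u_2$, $u^-=-u_3$, you need an index $i_+\in\{1,2\}$ with $G_\beta(u_{i_+})>0$ (so that $u_{i_+}+u_3$ has both $H_\alpha$-values negative, i.e.\ lies in $\mathcal{M}_{++}$) \emph{and} $G_\beta(u_j)>0$ for the discarded index $j$ (so that the energy strictly drops, since $\E(u_j)=\frac{p-q}{pq}G_\beta(u_j)$). But $G_\beta(u_1)+G_\beta(u_2)>0$ is compatible with, say, $G_\beta(u_2)\le 0<G_\beta(u_1)$, in which case no admissible selection exists: discarding $u_2$ gains no energy, and $u_2+u_3\notin\mathcal{M}_{++}$. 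These are precisely the cases (ii)--(iv) of the paper's Lemma~\ref{lem:twonodal}, which require genuinely different arguments --- producing an element of the (empty) set $\mathcal{K}_{\alpha,\beta}$ from a combination $u_1-t_0u_3$ in case (iii), and a strong-maximum-principle/unique-continuation argument showing that $u_1$ would be a positive solution on all of $\Omega$ in case (iv). Your proposal covers only case (i).
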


Several main properties of the function $\beta_\mathcal{L}(\alpha)$ are collected in Lemma~\ref{lem:bK} below. Let us remark that the parametrization by $\alpha$ in \eqref{bK} is different from the parametrization by $s$ of the form $(\alpha, \beta) = (\lambda+s, \lambda)$ which was used in \cite{BobkovTanaka2015} in order to construct a critical curve for the existence of positive solutions. 
In the context of the present article, the parametrization by $\alpha$ makes the problem $(GEV; \alpha, \beta)$ easier to analyze. 
We also note that \eqref{bK} is conceptually similar to the characterization of the first nontrivial curve of the Fu\v{c}\'ik spectrum given in \cite{mollepas}. In Section~\ref{sec:nehari} below, we introduce and study several other critical points besides \eqref{bK}, which although are not directly used in the proofs of the main results, increase the understanding of the construction of $(\alpha, \beta)$-plane, and could be employed in further investigations.

Next, we state the existence of negative energy nodal solutions for $(GEV;\alpha, \beta)$. 
Consider the ``upper'' critical value 
\begin{equation}\label{b_*2}
\beta^{*}_{\mathcal{U}}(\alpha) 
:= 
\sup \left\{\,
\frac{\intO |\nabla \varphi|^q \, dx}{\intO |\varphi|^q \, dx}:~ 
\varphi \in ES(p; \alpha)\setminus \{0\} \right\},
\end{equation}
where $\alpha \in \mathbb{R}$, and set $\beta^{*}_{\mathcal{U}}(\alpha) = -\infty$ provided $\alpha \not\in \sigma(-\Delta_p)$.
Several lower and upper bounds for $\beta^*_\mathcal{U}(\alpha)$ are given in Lemmas~\ref{lem:bU:bound1} and \ref{lem:bU:bound2} below.
Define $k_\alpha := \min\{k \in \mathbb{N}:~ \alpha < \lambda_{k+1}(p)\}$ and notice that $\lambda_{k_\alpha+1}(q) \geq \lambda_2(q)$ for all $\alpha \in \mathbb{R}$.
\begin{theorem}\label{thm:neg1}
	Let $\alpha \in \overline{\mathbb{R}\setminus\sigma(-\Delta_p)}$. 
	Then for all $\beta > \max\left\{\beta^{*}_{\mathcal{U}}(\alpha), \lambda_{k_\alpha+1}(q)\right\}$ the problem $(GEV;\alpha,\beta)$ has a nodal solution $u$ satisfying $\E(u) < 0$. 
\end{theorem}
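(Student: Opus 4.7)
The plan is a $\mathbb{Z}_2$-equivariant Krasnoselskii-genus minimax combined with a descending-flow argument that forces the minimax class to stay outside the cone of sign-constant functions. First I would construct an odd continuous sphere in $\W$ on which $\E$ is uniformly negative: by $\beta>\lambda_{k_\alpha+1}(q)$ and \eqref{lambda_n}--\eqref{F_n}, pick an odd continuous map $h_0\in\mathscr{F}_{k_\alpha+1}(q)$ satisfying $\max_{z\in S^{k_\alpha}}\|\nabla h_0(z)\|_q^q<\beta$. The density of $C_c^\infty(\Omega)\subset\W$ in $W_0^{1,q}(\Omega)$ allows, after a small perturbation, to assume $h_0(S^{k_\alpha})\subset S(q)\cap\W$ without destroying the bound, so that $G_\beta(h_0(z))\leq-\delta<0$ uniformly in $z$. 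When $\alpha\in\sigma(-\Delta_p)$, the condition $\beta>\beta^{*}_{\mathcal{U}}(\alpha)$ from \eqref{b_*2} similarly yields $G_\beta<0$ on $ES(p;\alpha)\setminus\{0\}$, compensating for the vanishing of $H_\alpha$ on that eigenspace. Using $p>q$ and the boundedness of $H_\alpha\circ h_0$, for every sufficiently small $r>0$ the rescaled set $\Sigma_r:=\{rh_0(z):z\in S^{k_\alpha}\}$ satisfies $\sup_{u\in\Sigma_r}\E(u)<0$.

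\textbf{Minimax level and Cerami condition.} Set $c:=\inf_{K\in\Gamma_{k_\alpha+1}}\sup_{u\in K}\E(u)$, where $\Gamma_m$ denotes the class of compact symmetric subsets of $\W\setminus\{0\}$ of Krasnoselskii genus $\geq m$; since $\Sigma_r\in\Gamma_{k_\alpha+1}$, we obtain $c<0$. For an unbounded $(C)_c$-sequence $(u_n)$, normalize $v_n:=u_n/\|\nabla u_n\|_p$ and pass to a weak limit $v$; since the $q$-order terms drop in the limit, the leading $p$-part of $\E'(u_n)\to 0$ forces $-\Delta_p v=\alpha|v|^{p-2}v$, so $v\in ES(p;\alpha)$. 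When $\alpha\notin\sigma(-\Delta_p)$ this forces $v=0$, and the $(S_+)$ property of $-\Delta_p$ then gives $v_n\to 0$ strongly, contradicting $\|\nabla v_n\|_p=1$. When $\alpha\in\sigma(-\Delta_p)$, $\beta>\beta^{*}_{\mathcal{U}}(\alpha)$ yields $G_\beta(v)<0$; substituting the Cerami relation into $\E(u_n)$ gives $\E(u_n)\sim(\tfrac{1}{q}-\tfrac{1}{p})\|\nabla u_n\|_p^q\,G_\beta(v_n)\to-\infty$, contradicting $\E(u_n)\to c$. Hence $(u_n)$ is bounded, and the $(S_+)$ property of $-\Delta_p-\Delta_q$ gives a strongly convergent subsequence; the standard equivariant deformation lemma then certifies $c$ as a critical value.

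\textbf{Sign-change and main obstacle.} To ensure the critical point obtained above is nodal, modify the scheme using a descending-flow / invariant-cone argument in the spirit of Bartsch-Liu-Weth: construct an odd, locally Lipschitz pseudo-gradient field on $\W$ that leaves invariant a small neighborhood $\mathcal{D}_+\cup\mathcal{D}_-$ of the cones of sign-constant functions and strictly decreases $\E$ off its critical set. Since the Krasnoselskii genus of $\mathcal{D}_+\cup\mathcal{D}_-$ is at most $1$ while every $K\in\Gamma_{k_\alpha+1}$ has genus $\geq k_\alpha+1\geq 2$, no such $K$ is entirely contained in $\mathcal{D}_+\cup\mathcal{D}_-$; the descending flow therefore forces the minimax value to be attained outside this neighborhood, at a sign-changing critical point. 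The main obstacle is the construction of this cone-preserving pseudo-gradient in the non-Hilbert setting, tailored to the $(p,q)$-Laplace structure of $\E$, together with the careful verification of the $(C)_c$ condition in the degenerate subcase $\alpha\in\sigma(-\Delta_p)$.
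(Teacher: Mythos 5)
Your overall architecture (an odd ``negative sphere'' of dimension $k_\alpha$ built from $\beta>\lambda_{k_\alpha+1}(q)$, compactness recovered from $\beta>\beta^{*}_{\mathcal{U}}(\alpha)$ via the blow-up analysis of Lemma~\ref{lem:bdd-PS}, and a cone-invariant flow for nodality) is close in spirit to the paper, and your direct verification of compactness at $\alpha\in\sigma(-\Delta_p)$ --- combining $G_\beta(v_0)=0$ for the rescaled limit with Lemma~\ref{lem:G-negative} --- is a legitimate shortcut past the paper's approximation $\alpha_n\to\alpha$. The first genuine gap is in the minimax step. You set $c:=\inf_{K\in\Gamma_{k_\alpha+1}}\sup_K\E$ over \emph{all} compact symmetric sets of Krasnosel'skii genus $\geq k_\alpha+1$ and never show $c>-\infty$. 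The only region on which $\E$ is known to be bounded below is $Y(\lambda)$ with $\lambda>\alpha$ (Lemma~\ref{lem:bdd-below}), so you would need every admissible $K$ to meet $Y(\lambda_{k_\alpha+1}(p))$. That intersection property is governed by the \emph{genus} eigenvalues of $-\Delta_p$, whereas $\lambda_{k_\alpha+1}(p)$ --- and hence $k_\alpha$ itself --- is defined in \eqref{lambda_n} through odd maps of spheres (Dr\'abek--Robinson). Since the genus eigenvalues satisfy $\lambda^{gen}_{k+1}(p)\le\lambda_{k+1}(p)$ and the two sequences are not known to coincide for $k\ge 2$, one cannot exclude $\lambda^{gen}_{k_\alpha+1}(p)<\alpha$; in that case $\{u:H_\alpha(u)<0\}$ contains a compact symmetric set $A$ of genus $k_\alpha+1$ with $\sup_A H_\alpha<0$, whence $\sup_{tA}\E\to-\infty$ as $t\to+\infty$ (because $p>q$) and $c=-\infty$. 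This is precisely what the hemisphere linking of Lemma~\ref{lem:link} is designed to avoid: gluing $h$ on $S^{k}_+$ with $-h(-\cdot)$ on $S^{k}_-$ produces an element of $\mathscr{F}_{k+1}(p)$, so $h(S^k_+)\cap Y(\lambda_{k+1}(p))\neq\emptyset$ for every continuous $h$ that is odd on the equator --- a property tied to the \emph{same} eigenvalue sequence that defines $k_\alpha$. Your argument is only safe when $k_\alpha=1$.

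The second gap is the nodality step. Knowing that no admissible $K$ is contained in $\mathcal{D}_+\cup\mathcal{D}_-$ does not show that the critical point at level $c$ changes sign: the level could still be attained at a constant-sign function, and extracting a sign-changing critical point requires the full invariant-cone machinery (a modified minimax of the type $\inf_K\sup_{K\setminus(\mathcal{D}_+\cup\mathcal{D}_-)}\E$, an odd cone-preserving pseudo-gradient adapted to the non-Hilbert $(p,q)$-structure, and the regularity needed to work in $\C$, where $P$ actually has interior). You correctly flag this as the main obstacle, but it is the part that carries the proof. The paper takes a lighter route: Theorem~\ref{thm:general} produces only an \emph{abstract} nontrivial negative-energy solution; if it is sign-constant it lies in ${\rm int\,}P$ by the strong maximum principle and is then used as a super-solution to truncate the nonlinearity, after which the three-critical-points result (Theorem~\ref{thm:1} via Proposition~\ref{prop:1} --- where the descending-flow construction is carried out, but for a \emph{coercive} truncated functional) delivers the nodal solution. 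To repair your proposal, either run the minimax over the hemisphere/linking classes throughout, or restate the hypotheses in terms of genus eigenvalues; and replace the sketched Bartsch--Liu--Weth step by a complete construction or by the paper's truncation argument.
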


Evidently, if $\sigma(-\Delta_p)$ is a discreet set (as it is for $p=2$ or $N = 1$), then $\overline{\mathbb{R}\setminus\sigma(-\Delta_p)} = \mathbb{R}$. Moreover,  $\lambda_1(p)$ and $\lambda_2(p)$ belong to $\overline{\mathbb{R}\setminus\sigma(-\Delta_p)}$ for all $p>1$ and $N \geq 1$, since $\lambda_1(p)$ is isolated and there are no eigenvalues between $\lambda_1(p)$ and $\lambda_2(p)$ (see Subsection~\ref{subsec:notations}).

One of the main ingredients for the proof of Theorem~\ref{thm:neg1} is the  result on the existence of three nontrivial solutions (positive, negative and sign-changing) to the problem with the $(p,q)$-Laplacian and a nonlinearity in the general form given by Theorem~\ref{thm:1} below. 
This result is of independent interest. 

Theorem~\ref{thm:neg1} can be refined as follows.
\begin{theorem}\label{thm:neg2}
Assume that
$$
(\alpha,\beta) \in (-\infty,\lambda_2(p))\times (\lambda_2(q),+\infty) 
\setminus
\{
(\lambda_1(p),\|\nabla \varphi_p\|_q^q/\|\varphi_p\|_q^q)
\}.
$$
Then $(GEV;\alpha,\beta)$ has a nodal solution $u$ 
satisfying $\E(u)<0$. 
\end{theorem}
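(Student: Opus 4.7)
The strategy is to reduce the theorem to Theorem~\ref{thm:neg1} where its hypotheses are met, and to handle the exceptional slice $\{\alpha=\lambda_1(p)\}$ by an approximation argument. Put $\beta_p:=\|\nabla\varphi_p\|_q^q/\|\varphi_p\|_q^q$. For every $\alpha<\lambda_2(p)$ we have $k_\alpha=1$ and hence $\lambda_{k_\alpha+1}(q)=\lambda_2(q)$. Moreover, since $-\Delta_p$ admits no non-positive eigenvalues, $\lambda_1(p)$ is isolated, and no eigenvalue of $-\Delta_p$ lies in $(\lambda_1(p),\lambda_2(p))$, we obtain $\sigma(-\Delta_p)\cap(-\infty,\lambda_2(p))=\{\lambda_1(p)\}$ and $(-\infty,\lambda_2(p))\subset\overline{\mathbb R\setminus\sigma(-\Delta_p)}$. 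For $\alpha\in(-\infty,\lambda_2(p))\setminus\{\lambda_1(p)\}$ this gives $\beta^{*}_{\mathcal U}(\alpha)=-\infty$, so Theorem~\ref{thm:neg1} applies under the single condition $\beta>\lambda_2(q)$, which is exactly our hypothesis. For $\alpha=\lambda_1(p)$, simplicity of the first eigenvalue yields $ES(p;\lambda_1(p))=\mathbb R\varphi_p$, hence $\beta^{*}_{\mathcal U}(\lambda_1(p))=\beta_p$, and Theorem~\ref{thm:neg1} covers $\beta>\max\{\beta_p,\lambda_2(q)\}$. The only uncovered case is therefore $\alpha=\lambda_1(p)$ together with $\lambda_2(q)<\beta<\beta_p$, which is non-empty only when $\beta_p>\lambda_2(q)$.

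For this residual slice, I would pick a sequence $\alpha_n\uparrow\lambda_1(p)$ with $\alpha_n<\lambda_1(p)$, so that each $\alpha_n\notin\sigma(-\Delta_p)$. By the first part of the argument, each problem $(GEV;\alpha_n,\beta)$ admits a nodal solution $u_n$ with $E_{\alpha_n,\beta}(u_n)<0$. Lemma~\ref{lem:1}(i) gives $H_{\alpha_n}(u_n)\ge 0$, and the Euler identity $H_{\alpha_n}(u_n)+G_\beta(u_n)=0$ implies $H_{\alpha_n}(u_n)=-G_\beta(u_n)>0$ and $E_{\alpha_n,\beta}(u_n)=\tfrac{p-q}{pq}G_\beta(u_n)$. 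To prove $W_0^{1,p}$-boundedness of $\{u_n\}$, I would argue by contradiction: if $\|\nabla u_n\|_p\to+\infty$, set $v_n:=u_n/\|\nabla u_n\|_p$ and divide the equation by $\|\nabla u_n\|_p^{p-1}$; the $q$-terms become negligible in the limit (since $p>q$), and the $(S_+)$-property of $-\Delta_p-\Delta_q$ produces a strong limit $v_0\in ES(p;\lambda_1(p))=\mathbb R\varphi_p$; a $C^1_0(\overline{\Omega})$-bootstrap via \cite{Lieberman} then transfers the sign of $\varphi_p$ to $v_n$ for large $n$, contradicting nodality. Once boundedness is secured, a further application of $(S_+)$ extracts $u_n\to u$ strongly in $W_0^{1,p}$, and $u$ solves $(GEV;\lambda_1(p),\beta)$ with $E_{\lambda_1(p),\beta}(u)\le 0$.

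The main obstacle is excluding the degenerate limits $u=\pm t\varphi_p$ and $u\equiv 0$. The sign-constant case is immediate from $\beta<\beta_p$: a direct computation yields
$E_{\lambda_1(p),\beta}(\pm t\varphi_p)=\tfrac{|t|^q}{q}(\|\nabla\varphi_p\|_q^q-\beta\|\varphi_p\|_q^q)>0$,
which is incompatible with $E_{\lambda_1(p),\beta}(u)\le 0$. The trivial limit $u\equiv 0$ is the more delicate case, which I would treat by a second rescaling in the regime $\|\nabla u_n\|_p\to 0$: setting $w_n:=u_n/\|\nabla u_n\|_p$ and dividing the equation by $\|\nabla u_n\|_p^{q-1}$ so that now the $q$-terms dominate while the $p$-terms become negligible, the $(S_+)$-property yields a limit $w_0\in ES(q;\beta)\cup\{0\}$; combined with the rescaled Euler identity $G_\beta(w_n)=-\|\nabla u_n\|_p^{p-q}H_{\alpha_n}(w_n)\to 0$ and $C^1_0$-convergence, one derives a contradiction from the nodal structure and the strict inequality $\beta>\lambda_2(q)\neq\lambda_1(q)$. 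Once nodality of $u$ is secured and $u\neq\pm t\varphi_p$, Lemma~\ref{lem:1}(i) gives $H_{\lambda_1(p)}(u)>0$, whence $E_{\lambda_1(p),\beta}(u)=-\tfrac{p-q}{pq}H_{\lambda_1(p)}(u)<0$, completing the argument.
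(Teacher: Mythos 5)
Your treatment of the non-exceptional part of the region coincides with the paper's: for $\alpha\in(-\infty,\lambda_2(p))\setminus\{\lambda_1(p)\}$ one has $\alpha\notin\sigma(-\Delta_p)$, hence $\beta^{*}_{\mathcal U}(\alpha)=-\infty$ and $\lambda_{k_\alpha+1}(q)=\lambda_2(q)$, so Theorem~\ref{thm:neg1} applies; and for $\alpha=\lambda_1(p)$ with $\beta>\max\{\beta_p,\lambda_2(q)\}$, where $\beta_p:=\|\nabla\varphi_p\|_q^q/\|\varphi_p\|_q^q$, the same theorem applies. The divergence is in the residual slice $\alpha=\lambda_1(p)$, $\lambda_2(q)<\beta<\beta_p$. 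The paper handles it via Remark~\ref{rem:negarive}: the approximation $\alpha_n\to\lambda_1(p)$ is carried out at the level of the \emph{minimax} critical points of case (ii) of Theorem~\ref{thm:general}, whose values $c_n$ satisfy $\delta_n-1\le c_n\le\rho_n\le\rho+o(1)$ with $\rho<0$ independent of $n$; the only obstruction to compactness is $G_\beta(v_0)=0$ for some $v_0\in ES(p;\lambda_1(p))\setminus\{0\}$, which $\beta\ne\beta_p$ excludes, so one obtains an abstract nontrivial solution with $\E(u)\le\rho<0$ at $\alpha=\lambda_1(p)$ itself, and nodality is produced a posteriori by Proposition~\ref{prop:1} if that solution is sign-constant.

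You instead approximate by the \emph{nodal} solutions $u_n$ of $(GEV;\alpha_n,\beta)$ furnished by Theorem~\ref{thm:neg1}, and this is where a genuine gap appears: you cannot exclude the trivial limit $u=0$. Those $u_n$ may be produced by Proposition~\ref{prop:1} (descending flow on a truncated functional), for which the only energy information is $E_{\alpha_n,\beta}(u_n)<0$; the upper bound $\max_{s}I(\gamma_0(s))<0$ depends on the truncating super-solution and on the small parameter $t$ in the path $\gamma_0=t\tilde\gamma$, both of which may degenerate as $n\to+\infty$, so no uniform negative upper bound on $E_{\alpha_n,\beta}(u_n)$ is available and $\|\nabla u_n\|_p\to0$ is not ruled out. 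Your second rescaling does not close this: if $\beta\in\sigma(-\Delta_q)$ with $\beta>\lambda_2(q)$, the limit $w_0$ may legitimately be a \emph{sign-changing} eigenfunction in $ES(q;\beta)$, in which case $G_\beta(w_0)=0$ and the nodal structure yield no contradiction; and if $\beta\notin\sigma(-\Delta_q)$ one must still rule out $w_0=0$, for which the normalization $\|\nabla w_n\|_p=1$ gives no lower bound on $\|\nabla w_n\|_q$ or $\|w_n\|_q$ (the nonlinearity $\beta|u|^{q-2}u$ is sublinear relative to $-\Delta_p$, so the usual Nehari-type exclusion of small solutions is unavailable). The remaining ingredients of your limit argument are sound: the blow-up alternative is excluded by $C^1$-convergence to a sign-constant first eigenfunction, and $u=\pm t\varphi_p$ is excluded since $\E(\pm t\varphi_p)=\frac{|t|^q}{q}G_\beta(\varphi_p)>0$ for $\beta<\beta_p$ while $\E(u)\le0$. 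To repair the proof you need either a uniform negative upper bound on the approximating energies --- which is exactly what the paper secures by approximating the minimax levels rather than the nodal solutions, postponing nodality to the final step via Proposition~\ref{prop:1} --- or an independent a priori lower bound on $\|\nabla u_n\|_p$.
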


\begin{remark} 
In the one-dimensional case we have 
$\|\varphi_p'\|_q^q/\|\varphi_p\|_q^q<\lambda_2(q)$ (see Lemma~\ref{lem:appendixA} in Appendix~A), and hence the assertion of Theorem~\ref{thm:neg2} holds for all 
$(\alpha,\beta) \in (-\infty,\lambda_2(p))\times (\lambda_2(q),+\infty)$. 
\end{remark}

Let us note that unlike the case of positive solutions, the structure of the set of nodal solutions for the problem $(GEV;\alpha, \beta)$ is more complicated, and we are not aware of the maximality of the regions obtained in Theorems~\ref{thm:positive} and \ref{thm:neg1}. 

The article is organized as follows. In Section~\ref{sec:nehari}, we apply the method of the Nehari manifold in order to prove Theorem~\ref{thm:positive}. 
In Section~\ref{sec:minimax}, by means of linking arguments and the descending flow method, we provide two general existence results which yield, in particular, Theorems~\ref{thm:neg1} and \ref{thm:neg2}. 
For the convenience of the reader we collect the proofs of the main theorems in Section~\ref{sec:proofs}. 
In Appendix~A, we prove several additional facts on the relation between eigenvalues and eigenfunctions of the $p$- and $q$-Laplacians in the one-dimensional case. Finally, in Appendix~B, we give a sketch of the proof of Theorem~\ref{thm:1}.

\section{Nodal solutions with positive energy}\label{sec:nehari}
The classical Nehari manifold for the problem
$(GEV;\alpha,\beta)$ is defined by 
$$
\mathcal{N}_{\alpha, \beta} := 
\left\{ 
u \in \W \setminus \{0\}:~
\left<E_{\alpha, \beta}'(u), u \right> = H_\alpha(u) + G_\beta(u) = 0
\right\}.
$$
It can be readily seen that $\mathcal{N}_{\alpha, \beta}$ 
contains \textit{all} nontrivial solutions of $(GEV;\alpha,\beta)$. 
On the other hand, if $u \in W_0^{1,p}$ is a sign-changing solution 
of $(GEV;\alpha,\beta)$, then
\begin{align*}
0=\left< E_{\alpha,\beta}'(u), u^+ \right> 
&= 
\left< E_{\alpha,\beta}'(u^+), u^+ \right> =  H_\alpha(u^+) + G_\beta(u^+), \\
0=-\left< E_{\alpha,\beta}'(u), u^- \right> 
&=
\left< E_{\alpha,\beta}'(u^-), u^- \right> = 
H_\alpha(u^-) + G_\beta(u^-).
\end{align*}
These equalities bring us to the definition of the so-called \textit{nodal} Nehari set for $(GEV;\alpha,\beta)$:
\begin{equation}\label{def:M}
\mathcal{M}_{\alpha, \beta} := \left\{u \in \W:~ u^\pm \not\equiv 0,~  H_\alpha(u^\pm) + G_\beta(u^\pm) = 0 \right\} 
= 
\left\{u \in W_0^{1,p}:~ u^\pm \in \mathcal{N}_{\alpha, \beta} \right\}.
\end{equation}
By construction, $\mathcal{M}_{\alpha, \beta}$ contains \textit{all} sign-changing solutions of $(GEV;\alpha,\beta)$, and hence $\mathcal{M}_{\alpha, \beta} \subset \mathcal{N}_{\alpha, \beta}$. 

Let us divide $\mathcal{M}_{\alpha, \beta}$ into the following three subsets: 
\begin{align*}
\mathcal{M}_{\alpha, \beta}^1 &:= \left\{u \in \mathcal{M}_{\alpha, \beta}:~ H_\alpha(u^+) < 0,~ H_\alpha(u^-)<0 \right\}, \\
\mathcal{M}_{\alpha, \beta}^2 &:= \left\{u \in \mathcal{M}_{\alpha, \beta}:~ H_\alpha(u^+) > 0,~ H_\alpha(u^-)>0 \right\}, \\
\mathcal{M}_{\alpha, \beta}^3 &:= \left\{u \in \mathcal{M}_{\alpha, \beta}:~ H_\alpha(u^+) \cdot H_\alpha(u^-) \leq 0 \right\}. 
\end{align*}
Evidently, $\mathcal{M}_{\alpha, \beta} = \mathcal{M}_{\alpha, \beta}^1 \cup \mathcal{M}_{\alpha, \beta}^2 \cup \mathcal{M}_{\alpha, \beta}^3$ 
and all $\mathcal{M}_{\alpha, \beta}^i$ are mutually disjoint. 
The main aim of this section is to prove the existence of nodal solutions for $(GEV;\alpha,\beta)$ through minimization of $E_{\alpha,\beta}$ over $\mathcal{M}_{\alpha, \beta}^1$ in an appropriate subset of the $(\alpha,\beta)$-plane.

\subsection{Preliminary analysis}
In this subsection, we mainly study the properties of the sets 
$\mathcal{M}_{\alpha, \beta}^1$, $\mathcal{M}_{\alpha, \beta}^2$, and $\mathcal{M}_{\alpha, \beta}^3$. First of all, we give the following auxiliary lemma, which is in fact analogous to \cite[Proposition~6]{BobkovTanaka2015} and can be proved in the same manner.
\begin{lemma}
	\label{lemm:E>0}
	Let $u \in W_0^{1,p}$. If $H_\alpha(u) \cdot G_\beta(u) < 0$, then there exists a unique critical point $t(u) > 0$ of $E_{\alpha, \beta}(t u)$ with respect to $t > 0$, and $t(u)u \in \mathcal{N}_{\alpha, \beta}$.
	In particular, if
	\begin{equation*}
	H_\alpha(u)< 0 <G_\beta(u),
	\end{equation*}
	then $t(u)$ is the unique maximum point of  $E_{\alpha, \beta}(t u)$ with respect to $t > 0$, and $E_{\alpha, \beta}(t(u) u) > 0$.
\end{lemma}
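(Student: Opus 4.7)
The plan is to reduce everything to a one-variable analysis of the fibering map $\phi(t) := E_{\alpha,\beta}(tu)$ for $t > 0$. Using the $p$- and $q$-homogeneity of $H_\alpha$ and $G_\beta$ respectively, this function takes the explicit form
$$
\phi(t) = \frac{t^p}{p}\, H_\alpha(u) + \frac{t^q}{q}\, G_\beta(u),
\qquad
\phi'(t) = t^{q-1}\!\left( t^{p-q} H_\alpha(u) + G_\beta(u) \right).
$$
First I would note that, since $p > q$, the equation $\phi'(t) = 0$ for $t > 0$ is equivalent to $t^{p-q} = -G_\beta(u)/H_\alpha(u)$, which has a (unique) positive solution if and only if the right-hand side is strictly positive, i.e.\ exactly when $H_\alpha(u)\cdot G_\beta(u) < 0$. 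This yields the unique critical point
$$
t(u) := \left( -\frac{G_\beta(u)}{H_\alpha(u)} \right)^{1/(p-q)}.
$$
Membership $t(u)u \in \mathcal{N}_{\alpha,\beta}$ follows immediately by multiplying the defining identity $t(u)^{p-q} H_\alpha(u) + G_\beta(u) = 0$ by $t(u)^q$, which gives $H_\alpha(t(u)u) + G_\beta(t(u)u) = 0$.

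For the second assertion, assuming $H_\alpha(u) < 0 < G_\beta(u)$, I would analyze the sign of $\phi'$ by looking at the factor $g(t) := t^{p-q} H_\alpha(u) + G_\beta(u)$. Since $H_\alpha(u) < 0$ and $p > q$, the function $g$ is strictly decreasing on $(0,\infty)$, with $g(t) \to G_\beta(u) > 0$ as $t \to 0^+$ and $g(t) \to -\infty$ as $t \to +\infty$. Therefore $\phi' > 0$ on $(0,t(u))$ and $\phi' < 0$ on $(t(u),+\infty)$, making $t(u)$ the unique (global) maximizer of $\phi$ on $(0,\infty)$. Finally, using the critical relation $t(u)^p H_\alpha(u) = -t(u)^q G_\beta(u)$ in the expression for $\phi(t(u))$, I would obtain
$$
E_{\alpha,\beta}(t(u)u) = t(u)^q\, G_\beta(u) \left( \frac{1}{q} - \frac{1}{p} \right) > 0,
$$
since $G_\beta(u) > 0$ and $q < p$.

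There is essentially no hard step here: the argument is entirely elementary calculus on the explicit function $\phi$, with the sign hypothesis $H_\alpha(u)\cdot G_\beta(u) < 0$ being used precisely to guarantee that the equation $t^{p-q} = -G_\beta(u)/H_\alpha(u)$ has a (unique) positive solution. The only point that warrants care is making sure one invokes $p > q$ both for the uniqueness of the positive root and for the strict monotonicity of $g$, and for the positivity of the coefficient $1/q - 1/p$ in the final energy computation.
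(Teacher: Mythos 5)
Your proof is correct and is exactly the standard fibering-map computation that the paper has in mind: the paper gives no proof of this lemma, deferring to the analogous Proposition~6 of \cite{BobkovTanaka2015}, which is established by the same elementary one-variable analysis of $t \mapsto \tfrac{t^p}{p}H_\alpha(u)+\tfrac{t^q}{q}G_\beta(u)$. All steps, including the use of $p>q$ for uniqueness of the critical point and for the sign of the final energy, are sound.
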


We start our consideration of the sets $\mathcal{M}_{\alpha, \beta}^i$ with several simple facts. 
\begin{lemma}\label{lem:easy-fact} 
	Let $\alpha, \beta \in \mathbb{R}$. The following hold:
\begin{itemize} 
\item[{\rm (i)}] If $\beta\leq \lambda_1(q)$, then 
$\mathcal{M}_{\alpha, \beta}^1 = \mathcal{M}_{\alpha, \beta}$ and, consequently,  $\mathcal{M}_{\alpha, \beta}^2$,  $\mathcal{M}_{\alpha, \beta}^3=\emptyset$.
\item[{\rm (ii)}] If $\alpha\leq \lambda_1(p)$, then 
$\mathcal{M}_{\alpha, \beta}^2 = \mathcal{M}_{\alpha, \beta}$ and, consequently,  $\mathcal{M}_{\alpha, \beta}^1$, $\mathcal{M}_{\alpha, \beta}^3=\emptyset$.
\end{itemize}
\end{lemma}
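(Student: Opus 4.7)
The plan is to read off both parts directly from Lemma~\ref{lem:1} together with the defining equality $H_\alpha(u^\pm)+G_\beta(u^\pm)=0$ that characterises elements of $\mathcal{M}_{\alpha,\beta}$. Since the two assertions are symmetric under the interchange $(\alpha,p)\leftrightarrow(\beta,q)$ and the roles of $H$ and $G$, I would only write out part~(i) in detail and note that (ii) follows by exactly the same argument.

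For (i), fix $u\in\mathcal{M}_{\alpha,\beta}$ and observe that, by definition, $u^\pm\not\equiv 0$ and
\[
H_\alpha(u^\pm)=-G_\beta(u^\pm).
\]
Since $\beta\leq\lambda_1(q)$ and $u^\pm\in W_0^{1,q}\setminus\{0\}$ (this uses that $u^\pm\in W_0^{1,p}\subset W_0^{1,q}$ for $q<p$), Lemma~\ref{lem:1}(ii) gives $G_\beta(u^\pm)\geq 0$, hence $H_\alpha(u^\pm)\leq 0$. It remains to upgrade these non-strict inequalities to strict ones, which will place $u$ in $\mathcal{M}_{\alpha,\beta}^1$ and show that $\mathcal{M}_{\alpha,\beta}^2$ and $\mathcal{M}_{\alpha,\beta}^3$ are empty.

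The only genuine point is ruling out the equality case, and I would do it as follows. Suppose for contradiction that $H_\alpha(u^+)=0$; then $G_\beta(u^+)=0$, and the ``only if'' part of Lemma~\ref{lem:1}(ii) yields $\beta=\lambda_1(q)$ together with $u^+=t\varphi_q$ for some $t\in\mathbb{R}\setminus\{0\}$. Since $u^+\geq 0$ and $\varphi_q$ has constant sign in $\Omega$, we may take $t>0$ and $\varphi_q>0$, so that $u^+(x)>0$ for almost every $x\in\Omega$. But the supports of $u^+$ and $u^-$ are disjoint (as recalled in Subsection~\ref{subsec:notations}), which forces $u^-\equiv 0$, contradicting $u\in\mathcal{M}_{\alpha,\beta}$. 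The same reasoning applied to $u^-$ excludes $H_\alpha(u^-)=0$, and thus $H_\alpha(u^\pm)<0$, i.e.\ $u\in\mathcal{M}_{\alpha,\beta}^1$.

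Part~(ii) is completely analogous, with Lemma~\ref{lem:1}(i) applied to $H_\alpha$ in place of Lemma~\ref{lem:1}(ii) applied to $G_\beta$: from $\alpha\leq\lambda_1(p)$ one concludes $H_\alpha(u^\pm)\geq 0$, and the strictness relies on the same ``first eigenfunction is of constant sign'' step, now invoking that $\varphi_p>0$ in $\Omega$ cannot coexist with $u^-\not\equiv 0$ given the disjointness of supports. The main obstacle in the whole argument is precisely this strict-inequality step; everything else is a direct bookkeeping consequence of the definitions of $\mathcal{M}_{\alpha,\beta}^{1,2,3}$.
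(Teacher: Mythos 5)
Your proposal is correct and follows essentially the same route as the paper's proof: apply Lemma~\ref{lem:1} to get $G_\beta(u^\pm)\geq 0$, then exclude the equality case via the strict positivity of the first eigenfunction (which is incompatible with $u$ being sign-changing), and conclude $H_\alpha(u^\pm)<0$ from the Nehari constraint. The paper merely states the equality-exclusion step more tersely than you do; there is no substantive difference.
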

\begin{proof} 
Let us first prove the assertion (i). Assume that $\beta\leq \lambda_1(q)$ and $w\in \mathcal{M}_{\alpha, \beta}$. 
Then Lemma~\ref{lem:1} implies that $G_\beta(w^\pm) \geq 0$ and in fact $G_\beta(w^\pm) > 0$, since otherwise $w^\pm = \varphi_q$, which is impossible in view of the strict positivity of $\varphi_q$ in $\Omega$.
Thus, the Nehari constraints $H_\alpha(w^\pm) + G_\beta(w^\pm) = 0$ yield $H_\alpha(w^\pm) < 0$,  whence $w \in \mathcal{M}_{\alpha, \beta}^1$. 
The assertion (ii) can be shown by the same arguments. 
\end{proof}

Let us introduce the following sets:
\begin{align}
\label{eq:B1}
\mathcal{B}_1(\alpha) 
&:= 
\left\{u \in \W:~  H_\alpha(u^+)<0,~ H_\alpha(u^-)<0\,\right\}, \\
\label{eq:B2}
\mathcal{B}_2(\alpha) 
&:= 
\left\{u \in \W:~  H_\alpha(u^+)>0,~ H_\alpha(u^-)>0\,\right\}.
\end{align}
Obviously, $\mathcal{M}_{\alpha, \beta}^1 \subset \mathcal{B}_1(\alpha)$ and $\mathcal{M}_{\alpha, \beta}^2 \subset \mathcal{B}_2(\alpha)$. Moreover, we have the following result.
\begin{lemma}\label{lem:easy-fact2}
	Let $\alpha, \beta \in \mathbb{R}$. The following hold:
	\begin{itemize} 
		\item[{\rm (i)}] If $\alpha \leq \lambda_2(p)$, then $\mathcal{B}_1(\alpha) = \emptyset$ and, consequently, 
		$\mathcal{M}_{\alpha, \beta}^1 = \emptyset$.
		\item[{\rm (ii)}] If $\beta  \leq \lambda_2(q)$, then $\mathcal{B}_2(\alpha) = \emptyset$ and, consequently, 
		$\mathcal{M}_{\alpha, \beta}^2 = \emptyset$.
	\end{itemize}
\end{lemma}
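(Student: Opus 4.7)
Both assertions are really one assertion applied in two different Sobolev spaces. The plan is to use the Krasnosel'skii-type variational characterization of $\lambda_2(r)$ recalled in \eqref{lambda_n}, namely
\[
\lambda_2(r) = \inf_{h \in \mathscr{F}_2(r)} \max_{z \in S^1} \|\nabla h(z)\|_r^r,
\]
and produce, from a hypothetical $u \in \mathcal{B}_1(\alpha)$ (resp.\ $u \in \mathcal{M}^2_{\alpha,\beta}$), an explicit competitor $h$ whose maximal Rayleigh quotient is strictly below $\alpha$ (resp.\ $\beta$).

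\textbf{Construction for (i).} Assume $u \in \mathcal{B}_1(\alpha)$, so $R^\pm := \|\nabla u^\pm\|_p^p / \|u^\pm\|_p^p < \alpha$. Set $\hat u^\pm := u^\pm/\|u^\pm\|_p$ and define, for $(a,b) \in S^1 \subset \mathbb{R}^2$,
\[
h(a,b) \,:=\, \frac{a\,\hat u^+ \,-\, b\,\hat u^-}{\left\|a\,\hat u^+ - b\,\hat u^-\right\|_p}.
\]
Because $u^+$ and $u^-$ have disjoint supports (and similarly $\nabla u^+, \nabla u^-$ vanish a.e.\ where the other is nonzero), the denominator simplifies to $(|a|^p + |b|^p)^{1/p}$, which is bounded below by a positive constant on $S^1$. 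Hence $h$ is well-defined, continuous, takes values in $S(p)$, and satisfies $h(-a,-b) = -h(a,b)$, so $h \in \mathscr{F}_2(p)$. The same disjointness yields
\[
\|\nabla h(a,b)\|_p^p \,=\, \frac{|a|^p R^+ + |b|^p R^-}{|a|^p + |b|^p} \,\leq\, \max\{R^+, R^-\} \,<\, \alpha.
\]
Taking the supremum over $(a,b) \in S^1$ and invoking \eqref{lambda_n}, we get $\lambda_2(p) < \alpha$, contradicting $\alpha \leq \lambda_2(p)$. Thus $\mathcal{B}_1(\alpha) = \emptyset$, and since $\mathcal{M}^1_{\alpha,\beta} \subset \mathcal{B}_1(\alpha)$ this also forces $\mathcal{M}^1_{\alpha,\beta} = \emptyset$.

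\textbf{Construction for (ii).} Here the hypothesis is on $\beta$ and $\lambda_2(q)$, so one works in $W_0^{1,q}$. Any $u \in \mathcal{M}^2_{\alpha,\beta}$ has $H_\alpha(u^\pm) > 0$, and the Nehari relation $H_\alpha(u^\pm) + G_\beta(u^\pm) = 0$ then gives $G_\beta(u^\pm) < 0$, i.e., $\|\nabla u^\pm\|_q^q / \|u^\pm\|_q^q < \beta$. Applying exactly the same odd-map construction as in (i), but with $p$ replaced by $q$ and the normalization taken in the $L^q$-norm, produces $\tilde h \in \mathscr{F}_2(q)$ with $\max_{S^1} \|\nabla \tilde h\|_q^q < \beta \leq \lambda_2(q)$, again contradicting \eqref{lambda_n}. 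The analogous emptiness statement for the $\mathcal{B}_2$-type set follows symmetrically.

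\textbf{Anticipated obstacle.} The essential content is the odd-path construction together with the disjoint-support identities; no minimax existence machinery is needed. The only genuinely non-formal point is verifying continuity of $h$ as a map $S^1 \to W_0^{1,r}$ and checking that the denominator stays bounded away from $0$, but both are immediate from the formula $(|a|^p+|b|^p)^{1/p} \geq 2^{-1/p}$ on $S^1$. I do not expect any deeper difficulty.
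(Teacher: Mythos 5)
Your proof is correct. Where the paper disposes of (i) in one line by quoting the characterization \eqref{eq:char-2nd-ev} of $\lambda_2(r)$ from \cite{BobkovEJQT2014}, you instead prove the only half of that characterization that is needed, namely $\lambda_2(r)\le\max\{R^+,R^-\}$ for a sign-changing $u$, directly from the minimax definition \eqref{lambda_n} via the odd ``great-circle'' map $h(a,b)=(a\hat u^+-b\hat u^-)/\|a\hat u^+-b\hat u^-\|_r$. The disjoint-support identities, the oddness and continuity of $h$, and the estimate $\|\nabla h(a,b)\|_r^r\le\max\{R^+,R^-\}<\alpha$ are all right, so your argument is self-contained where the paper's is a citation; the only blemish is the claimed lower bound $2^{-1/p}$ for the denominator, which fails for large $p$, but any positive lower bound (e.g.\ $(|a|^p+|b|^p)^{1/p}\ge\max\{|a|,|b|\}\ge 2^{-1/2}$) does the job. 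For (ii) you sensibly target $\mathcal{M}^2_{\alpha,\beta}$ directly, extracting $G_\beta(u^\pm)<0$ from the Nehari constraint and rerunning the construction in $W_0^{1,q}$. This is in fact the right reading of the statement: the literal claim ``$\mathcal{B}_2(\alpha)=\emptyset$'' cannot hold as written, since $\mathcal{B}_2(\alpha)$ does not depend on $\beta$ and is nonempty for every $\alpha$ (as the paper itself notes in the proof of Lemma~\ref{lem:b2}\,(i)); the set your argument shows to be empty is $\{u:\ G_\beta(u^+)<0,\ G_\beta(u^-)<0\}$, and the consequence $\mathcal{M}^2_{\alpha,\beta}=\emptyset$ is exactly what is used later, so nothing downstream is affected.
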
	
\begin{proof}
	We give the proof of the assertion (i). The second part can be proved analogously. Suppose, by contradiction, that $\alpha \leq \lambda_2(p)$ and there exists $w \in \mathcal{B}_1(\alpha)$. These assumptions read as 
	\begin{equation}\label{eq:2nd-ev-1} 
	\max \left\{ \frac{\intO |\nabla w^+|^p \, dx}{\intO |w^+|^p \, dx}, 
	\frac{\intO |\nabla w^-|^p \, dx}{\intO |w^-|^p \, dx} \right\}< \alpha \le \lambda_2(p). 
	\end{equation}
	On the other hand, it is shown in \cite[Proposition~4.2]{BobkovEJQT2014} 
	that the second eigenvalue $\lambda_2(r)$, $r > 1$, can be characterized as follows: 
	\begin{equation}\label{eq:char-2nd-ev}
	\lambda_2(r)=\inf\left\{
	\max \left\{\frac{\intO |\nabla u^+|^r \, dx}{\intO |u^+|^r \, dx}, 
	\frac{\intO |\nabla u^-|^r \, dx}{\intO |u^-|^r \, dx} \right\}:~ u\in W^{1,r}_{0},\ 
	u^\pm \not\equiv 0 
	\right\}.
	\end{equation}
	Comparing \eqref{eq:2nd-ev-1} and \eqref{eq:char-2nd-ev} (with $r=p$), we obtain a contradiction. 
\end{proof}

Lemmas~\ref{lem:easy-fact} and \ref{lem:easy-fact2} readily entail the following information about the emptiness of $\mathcal{M}_{\alpha, \beta}$ and, consequently, the nonexistence of nodal solutions for $(GEV;\alpha,\beta)$. 
\begin{lemma}\label{lemma:emptinessM}
If $\alpha \leq \lambda_2(p)$ and $\beta \leq \lambda_1(q)$, or 
$\alpha \leq \lambda_1(p)$ and $\beta \leq \lambda_2(q)$, 
then $\mathcal{M}_{\alpha, \beta} = \emptyset$. 
\end{lemma}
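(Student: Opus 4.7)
The plan is to derive the lemma as an immediate corollary of Lemmas~\ref{lem:easy-fact} and \ref{lem:easy-fact2} by considering the two cases separately. Since $\mathcal{M}_{\alpha,\beta} = \mathcal{M}_{\alpha,\beta}^1 \cup \mathcal{M}_{\alpha,\beta}^2 \cup \mathcal{M}_{\alpha,\beta}^3$, it suffices to show that each of the three subsets is empty under the stated assumptions on $(\alpha,\beta)$.

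In the first case $\alpha \leq \lambda_2(p)$, $\beta \leq \lambda_1(q)$, I would first apply Lemma~\ref{lem:easy-fact}(i), which uses $\beta \leq \lambda_1(q)$ to conclude $\mathcal{M}_{\alpha,\beta}^2 = \mathcal{M}_{\alpha,\beta}^3 = \emptyset$ and $\mathcal{M}_{\alpha,\beta} = \mathcal{M}_{\alpha,\beta}^1$. Then Lemma~\ref{lem:easy-fact2}(i), using $\alpha \leq \lambda_2(p)$, gives $\mathcal{M}_{\alpha,\beta}^1 \subset \mathcal{B}_1(\alpha) = \emptyset$. Combining the two yields $\mathcal{M}_{\alpha,\beta} = \emptyset$.

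In the second case $\alpha \leq \lambda_1(p)$, $\beta \leq \lambda_2(q)$, the argument is symmetric: Lemma~\ref{lem:easy-fact}(ii) with $\alpha \leq \lambda_1(p)$ reduces $\mathcal{M}_{\alpha,\beta}$ to $\mathcal{M}_{\alpha,\beta}^2$, and Lemma~\ref{lem:easy-fact2}(ii) with $\beta \leq \lambda_2(q)$ forces $\mathcal{M}_{\alpha,\beta}^2 \subset \mathcal{B}_2(\alpha) = \emptyset$.

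There is no genuine obstacle here; the content of the lemma lies entirely in the two preceding lemmas, which in turn rest on the simplicity of $\lambda_1(r)$ and the characterization \eqref{eq:char-2nd-ev} of $\lambda_2(r)$ from \cite{BobkovEJQT2014}. The role of this statement is just to package those results into a clean nonexistence criterion that will feed into the proof of Theorem~\ref{thm:nonexist}.
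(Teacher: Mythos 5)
Your proposal is correct and is exactly the argument the paper intends: the lemma is stated as an immediate consequence of Lemmas~\ref{lem:easy-fact} and \ref{lem:easy-fact2}, and your case analysis (reduce $\mathcal{M}_{\alpha,\beta}$ to $\mathcal{M}_{\alpha,\beta}^1$ or $\mathcal{M}_{\alpha,\beta}^2$ via Lemma~\ref{lem:easy-fact}, then kill that piece via Lemma~\ref{lem:easy-fact2}) is the same route. Nothing is missing.
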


\begin{lemma}\label{lem:nonexist} 
Let $\alpha \leq \lambda_2(p)$ and $\beta \leq \lambda_2(q)$. 
If $u$ is a nodal solution of $(GEV;\alpha,\beta)$, then 
$\alpha > \lambda_1(p)$, $\beta > \lambda_1(q)$ and
$u\in\mathcal{M}_{\alpha,\beta}^3$. 
\end{lemma}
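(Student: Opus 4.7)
The plan is to exploit the disjoint decomposition $\mathcal{M}_{\alpha,\beta} = \mathcal{M}_{\alpha,\beta}^1 \cup \mathcal{M}_{\alpha,\beta}^2 \cup \mathcal{M}_{\alpha,\beta}^3$ together with the vanishing criteria supplied by Lemmas~\ref{lem:easy-fact} and \ref{lem:easy-fact2}. First I would observe that, since $u$ is a nodal solution of $(GEV;\alpha,\beta)$, the positive and negative parts $u^\pm$ are nontrivial and testing the weak formulation \eqref{weaksolution} against $\varphi = u^+$ and $\varphi = -u^-$ gives $H_\alpha(u^\pm) + G_\beta(u^\pm) = 0$; hence $u \in \mathcal{M}_{\alpha,\beta}$. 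Thus it suffices to locate $u$ within the stratification $\mathcal{M}_{\alpha,\beta}^1,\mathcal{M}_{\alpha,\beta}^2,\mathcal{M}_{\alpha,\beta}^3$ and to exclude the borderline values for $\alpha$ and $\beta$.

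Next, I would apply Lemma~\ref{lem:easy-fact2}: the hypothesis $\alpha \le \lambda_2(p)$ yields $\mathcal{B}_1(\alpha) = \emptyset$, and hence $\mathcal{M}_{\alpha,\beta}^1 = \emptyset$; symmetrically, $\beta \le \lambda_2(q)$ yields $\mathcal{M}_{\alpha,\beta}^2 = \emptyset$. Since the three strata are mutually disjoint and exhaust $\mathcal{M}_{\alpha,\beta}$, the inclusion $u \in \mathcal{M}_{\alpha,\beta}$ forces $u \in \mathcal{M}_{\alpha,\beta}^3$. This already establishes the last assertion of the lemma.

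To obtain the strict inequalities $\alpha > \lambda_1(p)$ and $\beta > \lambda_1(q)$, I would argue by contradiction using Lemma~\ref{lem:easy-fact}. If $\alpha \le \lambda_1(p)$, part (ii) of that lemma implies $\mathcal{M}_{\alpha,\beta} = \mathcal{M}_{\alpha,\beta}^2$; but we just showed $\mathcal{M}_{\alpha,\beta}^2 = \emptyset$, so no nodal solution could exist. Analogously, $\beta \le \lambda_1(q)$ combined with part (i) of Lemma~\ref{lem:easy-fact} would give $\mathcal{M}_{\alpha,\beta} = \mathcal{M}_{\alpha,\beta}^1 = \emptyset$, again contradicting the existence of $u$. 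Since every step reduces to previously established auxiliary results, there is no genuine obstacle in this lemma; the main content has already been absorbed into Lemmas~\ref{lem:easy-fact} and \ref{lem:easy-fact2}, and the proof here is essentially a bookkeeping argument that records their combined consequences.
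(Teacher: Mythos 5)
Your proof is correct and follows exactly the route the paper intends: the paper gives no separate argument for this lemma but derives it, as you do, from membership of nodal solutions in $\mathcal{M}_{\alpha,\beta}$ together with Lemmas~\ref{lem:easy-fact} and \ref{lem:easy-fact2}. Nothing to add.
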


Let us now subsequently treat the emptiness and nonemptiness of $\mathcal{M}_{\alpha, \beta}^1$ and $\mathcal{M}_{\alpha, \beta}^2$. 
First we consider $\mathcal{M}_{\alpha,\beta}^1$. 
Introduce the critical value 
\begin{align*}
\beta_1(\alpha) := 
\sup \left\{
\min \left\{ \frac{\intO |\nabla u^+|^q \, dx}{\intO |u^+|^q \, dx}, 
\frac{\intO |\nabla u^-|^q \, dx}{\intO |u^-|^q \, dx} \right\}
:~ u\in \mathcal{B}_1(\alpha)\, 
\right\}
\end{align*}
for each $\alpha \in \mathbb{R}$,
where the admissible set $\mathcal{B}_1(\alpha)$ is defined by \eqref{eq:B1}, or, equivalently,
\begin{align*}
\mathcal{B}_1(\alpha) = 
\left\{u \in \W:~ u^\pm \not\equiv 0,~ \max 
\left\{\frac{\intO |\nabla u^+|^p \, dx}{\intO |u^+|^p \, dx}, \frac{\intO |\nabla u^-|^p \, dx}{\intO |u^-|^p \, dx} 
\right\} 
< \alpha
\right\}.
\end{align*}
We assume that $\beta_1(\alpha) = -\infty$ whenever $\mathcal{B}_1(\alpha)$ is empty. Consider also  
\begin{equation}\label{def:beta_1}
\beta_1^*:=\sup\left\{
\min
\left\{ \frac{\intO |\nabla \varphi^+|^q \, dx}{\intO |\varphi^+|^q \, dx}, 
\frac{\intO |\nabla \varphi^-|^q \, dx}{\intO |\varphi^-|^q \, dx} \right\}:~ 
\varphi\in ES(p,\lambda_2(p))\setminus\{0\}\right\}, 
\end{equation}
where $ES(p,\lambda_2(p))$ is the eigenspace of the second eigenvalue $\lambda_2(p)$ defined by \eqref{def:eigenspace}. 

The main properties of $\beta_1(\alpha)$ are collected in the following lemma.
\begin{lemma}\label{lem:b1}
	The following assertions hold:
\begin{enumerate}
	\item[{\rm (i)}] $\beta_1(\alpha) = -\infty$ for any $\alpha \leq \lambda_2(p)$, and $\beta_1(\alpha) \in [\beta_1^*, +\infty)$ for all $\alpha > \lambda_2(p)$; 
	\item[{\rm (ii)}] $\beta_1(\alpha)$ is nondecreasing for $\alpha \in (\lambda_2(p), +\infty)$;
	\item[{\rm (iii)}] $\beta_1(\alpha)$ is left-continuous for $\alpha \in (\lambda_2(p), +\infty)$;
	\item[{\rm (iv)}] $\beta_1(\alpha) \to +\infty$ as $\alpha \to +\infty$; 
	\item[{\rm (v)}] $\mathcal{M}_{\alpha, \beta}^1 \neq \emptyset$ if and only if $\alpha > \lambda_2(p)$ and $\beta < \beta_1(\alpha)$.
\end{enumerate}
\end{lemma}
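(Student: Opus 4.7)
The plan is to work through the five assertions in the order stated, since (i)--(iv) are basic properties of the function $\beta_1$ that feed into the structural characterization (v). Assertion (i) splits naturally: emptiness of $\mathcal{B}_1(\alpha)$ for $\alpha\le\lambda_2(p)$ is exactly Lemma~\ref{lem:easy-fact2}(i); the lower bound $\beta_1(\alpha)\ge\beta_1^*$ for $\alpha>\lambda_2(p)$ follows by using any $\varphi\in ES(p,\lambda_2(p))\setminus\{0\}$ as a test function, since testing $(EV;p,\lambda_2(p))$ against $\varphi^\pm$ yields $\|\nabla\varphi^\pm\|_p^p=\lambda_2(p)\|\varphi^\pm\|_p^p<\alpha\|\varphi^\pm\|_p^p$, placing $\varphi\in\mathcal{B}_1(\alpha)$. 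Finiteness $\beta_1(\alpha)<+\infty$ is the main technical obstacle, and I would handle it by Sobolev--H\"older interpolation: after normalizing $\|u^\pm\|_p=1$, the constraint gives $\|\nabla u^\pm\|_p<\alpha^{1/p}$, and combining the Sobolev embedding $\W\hookrightarrow L^{p^*}$ (or the appropriate variant when $p\ge N$) with the interpolation $\|u^\pm\|_p\le\|u^\pm\|_q^{\theta}\|u^\pm\|_{p^*}^{1-\theta}$ yields a strictly positive lower bound on $\|u^\pm\|_q$, while H\"older gives $\|\nabla u^\pm\|_q\le|\Omega|^{(p-q)/(pq)}\|\nabla u^\pm\|_p$; dividing, the $q$-Rayleigh quotient of $u^\pm$ is bounded in terms of $\alpha$, $|\Omega|$, $p$, $q$, $N$ alone.

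Monotonicity (ii) is immediate from the inclusion $\mathcal{B}_1(\alpha_1)\subset\mathcal{B}_1(\alpha_2)$ whenever $\alpha_1<\alpha_2$. For left-continuity (iii), one direction follows from (ii), while for the other I would pick $u\in\mathcal{B}_1(\alpha)$ whose min of $q$-Rayleigh quotients is within $\varepsilon$ of $\beta_1(\alpha)$; since the $p$-Rayleigh quotients of $u^\pm$ lie strictly below $\alpha$, the same $u$ remains in $\mathcal{B}_1(\alpha')$ for every $\alpha'<\alpha$ sufficiently close to $\alpha$, giving $\beta_1(\alpha')\ge\beta_1(\alpha)-\varepsilon$. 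For (iv), a concentration test suffices: with two disjoint balls $B_r(x_1),B_r(x_2)\subset\Omega$ and tent functions $u^\pm(x)=\max(0,1-|x-x_i|/r)$ supported in them respectively, the $p$- and $q$-Rayleigh quotients scale like $r^{-p}$ and $r^{-q}$, so choosing $r\sim\alpha^{-1/p}$ keeps $u\in\mathcal{B}_1(\alpha)$ while forcing the minimum of the $q$-quotients to grow like $\alpha^{q/p}\to+\infty$.

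Finally, (v) is the structural bridge between $\beta_1$ and $\mathcal{M}_{\alpha,\beta}^1$. The forward direction is essentially tautological: if $u\in\mathcal{M}_{\alpha,\beta}^1$ then $u\in\mathcal{B}_1(\alpha)$ by construction, so (i) forces $\alpha>\lambda_2(p)$, while the Nehari constraints $H_\alpha(u^\pm)+G_\beta(u^\pm)=0$ combined with $H_\alpha(u^\pm)<0$ give $G_\beta(u^\pm)>0$, so the $q$-Rayleigh quotients of $u^\pm$ strictly exceed $\beta$ and therefore $\beta<\beta_1(\alpha)$. For the reverse direction, given $\alpha>\lambda_2(p)$ and $\beta<\beta_1(\alpha)$, a near-maximizer $u\in\mathcal{B}_1(\alpha)$ of the supremum satisfies $H_\alpha(u^\pm)<0<G_\beta(u^\pm)$, whence Lemma~\ref{lemm:E>0} yields unique $t^\pm>0$ with $t^\pm u^\pm\in\mathcal{N}_{\alpha,\beta}$; the function $v:=t^+u^+-t^-u^-$ then lies in $\mathcal{M}_{\alpha,\beta}^1$ since disjoint supports give $v^\pm=t^\pm u^\pm$ and positive scaling preserves the sign of $H_\alpha$.
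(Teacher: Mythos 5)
Your proof is correct, and for assertions (ii), (iii), and (v) it coincides with the paper's argument: monotonicity from the inclusion $\mathcal{B}_1(\alpha_1)\subset\mathcal{B}_1(\alpha_2)$, left-continuity by observing that a near-extremal $u_\varepsilon$ with $H_{\alpha_0}(u_\varepsilon^\pm)<0$ survives a small leftward perturbation of $\alpha$, and the characterization (v) via the Nehari constraints together with Lemma~\ref{lemm:E>0}. The two places where you genuinely diverge are the technical halves of (i) and (iv). For the finiteness of $\beta_1(\alpha)$, the paper invokes \cite[Lemma~9]{T-2014}, which asserts $\|\nabla v\|_p\le C(\alpha)\|v\|_q$ on the set $X(\alpha)=\{v:\ \|\nabla v\|_p^p\le\alpha\|v\|_p^p\}$, and then combines this with H\"older; your Sobolev--interpolation argument (normalize $\|u^\pm\|_p=1$, bound $\|u^\pm\|_{p^*}$ by the Sobolev constant times $\alpha^{1/p}$, and extract a positive lower bound on $\|u^\pm\|_q$ from $\|u^\pm\|_p\le\|u^\pm\|_q^{\theta}\|u^\pm\|_{p^*}^{1-\theta}$) is in effect a self-contained proof of that cited lemma, at the cost of a case distinction when $p\ge N$. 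For (iv), the paper takes a $q$-eigenfunction $\varphi$ associated with $\lambda_{k_L}(q)>L$, uses its $C^{1,\gamma}_0(\overline{\Omega})$ regularity to place it in $\mathcal{B}_1(\alpha)$ for $\alpha$ large, and reads off $\min\{\|\nabla\varphi^\pm\|_q^q/\|\varphi^\pm\|_q^q\}=\lambda_{k_L}(q)$; your two-bump tent-function construction with $r\sim\alpha^{-1/p}$ is more elementary and even gives a quantitative rate $\beta_1(\alpha)\gtrsim\alpha^{q/p}$, whereas the eigenfunction route avoids any scaling computation but relies on the unboundedness of the variational spectrum $\{\lambda_k(q)\}$ and the regularity of its eigenfunctions. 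Both substitutions are sound.
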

\begin{proof}
(i) If $\alpha \leq \lambda_2(p)$, then $\mathcal{B}_1(\alpha) = \emptyset$ in view of Lemma~\ref{lem:easy-fact2}, and hence $\beta_1(\alpha) = -\infty$. 
On the other hand, if $\alpha > \lambda_2(p)$, then any second eigenfunction $\varphi_{2,p}$ satisfies $H_\alpha(\varphi_{2,p}^\pm) < 0$ and, in consequence, it belongs to $\mathcal{B}_1(\alpha)$. This implies that  $ES(p,\lambda_2(p))\setminus\{0\}\subset \mathcal{B}_1(\alpha)$ and $\beta_1(\alpha) \geq \beta_1^*$.

Consider the set
\begin{equation}\label{def:X}
X(\alpha) := \{
v \in W_0^{1,p}:~ \|\nabla v\|_p^p \leq \alpha \|v\|_p^p \, \}.
\end{equation}
It is known that for any $\alpha \in \mathbb{R}$ there exists $C(\alpha) > 0$ such that $\|\nabla v\|_p \leq C(\alpha) \|v\|_q$ for all $v \in X(\alpha)$, see \cite[Lemma~9]{T-2014}.
Therefore, since $u^\pm \in X(\alpha)$ for any $u \in \mathcal{B}_1(\alpha)$,  the H\"older inequality yields the existence of a constant $C_1 > 0$ 
such that 
$$
C_1 \|\nabla u^\pm\|_q \leq \|\nabla u^\pm\|_p \leq C(\alpha) \|u^\pm\|_q
~\text{ for all }~
u \in \mathcal{B}_1(\alpha), 
$$
which gives the boundedness of $\beta_1(\alpha)$ from above. 

(ii) If $\lambda_2(p) < \alpha_1 \leq \alpha_2$, then $\mathcal{B}_1(\alpha_1) \subset \mathcal{B}_1(\alpha_2)$, which implies the desired monotonicity.

(iii) Let us fix an arbitrary  $\alpha_0>\lambda_2(p)$. 
Since the assertion (ii) readily leads to $\lim\limits_{\alpha \to \alpha_0-0}\beta_1(\alpha) \leq \beta_1(\alpha_0)$, 
it is enough to show that  $\lim\limits_{\alpha \to \alpha_0-0}\beta_1(\alpha) \geq \beta_1(\alpha_0)$.
By the definition of $\beta_1(\alpha_0)$, for any $\varepsilon > 0$  
there exists $u_\varepsilon \in \mathcal{B}_1(\alpha_0)$ such that 
\begin{equation}\label{eq:b1-1}
\beta_1(\alpha_0)-\varepsilon \leq \min \left\{ \frac{\intO |\nabla u_\varepsilon^+|^q \, dx}{\intO |u_\varepsilon^+|^q \, dx}, 
\frac{\intO |\nabla u_\varepsilon^-|^q \, dx}{\intO |u_\varepsilon^-|^q \, dx} \right\}. 
\end{equation}
Recalling that  $H_{\alpha_0}(u_\varepsilon^\pm)<0$, we can find $\delta = \delta(\varepsilon)>0$ such that $H_\alpha(u_\varepsilon^\pm) < 0$ for any $\alpha \in (\alpha_0-\delta, \alpha_0]$. Therefore, $u_\varepsilon \in \mathcal{B}_1(\alpha)$, and for all $\alpha \in (\alpha_0-\delta, \alpha_0]$ 
the definition of $\beta_1(\alpha)$ leads to 
\begin{equation}\label{eq:b1-2}
\min \left\{ \frac{\intO |\nabla u_\varepsilon^+|^q \, dx}{\intO |u_\varepsilon^+|^q \, dx}, 
\frac{\intO |\nabla u_\varepsilon^-|^q \, dx}{\intO |u_\varepsilon^-|^q \, dx} \right\}
\le \beta_1(\alpha).
\end{equation}
Combining \eqref{eq:b1-1} and \eqref{eq:b1-2}, we obtain the inequality 
$\lim\limits_{\alpha \to \alpha_0-0}\beta_1(\alpha) \geq \beta_1(\alpha_0)$, since $\varepsilon>0$ is arbitrary. 

(iv) Let $L > \lambda_1(q)$ be an arbitrary positive constant. Recalling that for the variational eigenvalues $\lambda_k(q)$ there holds $\lambda_k(q) \to +\infty$ as $k \to +\infty$, we can find $k_L \geq 2$ such that 
$\lambda_{k_L}(q) > L$. Take an eigenfunction $\varphi$ corresponding to $\lambda_{k_L}(q)$. Since $\varphi\in C^{1,\gamma}_0(\overline{\Omega})$ and $\varphi$ changes its sign in $\Omega$ (see Subsection~\ref{subsec:notations}), there exists $\alpha_L$ satisfying 
$$
\max\left\{ \frac{\intO |\nabla \varphi^+|^p \, dx}
{\intO |\varphi^+|^p \, dx}, 
\frac{\intO |\nabla \varphi^-|^p \, dx}{\intO |\varphi^-|^p \, dx}
\right\}<\alpha_L. 
$$
Therefore $\varphi\in \mathcal{B}_1(\alpha_L)$, and 
from the definition of $\beta_1(\alpha_L)$ and 
its monotonicity it follows that 
\begin{equation*}
\beta_1(\alpha) \geq
\beta_1(\alpha_L) \geq \min\left\{ \frac{\intO |\nabla \varphi^+|^q \, dx}
{\intO |\varphi^+|^q \, dx}, 
\frac{\intO |\nabla \varphi^-|^q \, dx}{\intO |\varphi^-|^q \, dx}
\right\} =\lambda_{k_L}(q) > L 
\end{equation*}
provided $\alpha \geq \alpha_L$. 
Since $L$ can be chosen arbitrary large, we conclude that $\lim\limits_{\alpha \to +\infty}\beta_1(\alpha) = +\infty$. 

(v) If $\alpha > \lambda_2(p)$ and $\beta < \beta_1(\alpha)$, then,  
by the definition of $\beta_1(\alpha)$, 
there exists $u \in \mathcal{B}_1(\alpha)$ such that
\begin{equation}\label{eq:b<frac<b1}
\beta < \min \left\{ \frac{\intO |\nabla u^+|^q \, dx}{\intO |u^+|^q \, dx}, \frac{\intO |\nabla u^-|^q \, dx}{\intO |u^-|^q \, dx} \right\} \leq \beta_1(\alpha).
\end{equation}
This means that $H_\alpha(u^\pm) < 0$ and $G_\beta(u^\pm) > 0$. 
Hence, by Lemma~\ref{lemm:E>0} we obtain $t^\pm > 0$ such that 
$t^\pm u^\pm\in\mathcal{N}_{\alpha,\beta}$, whence $t^+ u^+ - t^- u^- \in \mathcal{M}_{\alpha,\beta}^1$. 

Suppose now that there exists $u \in \mathcal{M}_{\alpha,\beta}^1$ for some $\alpha, \beta \in \mathbb{R}$. 
Lemma~\ref{lem:easy-fact2} implies that  $\alpha > \lambda_2(p)$. On the other hand, $u \in \mathcal{M}_{\alpha,\beta}^1 \subset \mathcal{B}_1(\alpha)$. Hence, from the Nehari constraints it follows that $G_\beta(u^\pm) > 0$, and we arrive to \eqref{eq:b<frac<b1}. 
\end{proof}

\smallskip
Consider now the set $\mathcal{M}_{\alpha,\beta}^2$. 
The corresponding critical value, parametrized again by $\alpha \in \mathbb{R}$, appears to be the following:
\begin{align*}
\beta_2(\alpha) 
:= 
\inf\left\{
\max \left\{ \frac{\intO |\nabla u^+|^q \, dx}{\intO |u^+|^q \, dx}, 
\frac{\intO |\nabla u^-|^q \, dx}{\intO |u^-|^q \, dx} \right\}:~ 
u\in \mathcal{B}_2(\alpha)\right\},
\end{align*}
where the admissible set $\mathcal{B}_2(\alpha)$ is defined by \eqref{eq:B2}. 

The main properties of $\beta_2(\alpha)$ are similar to those for $\beta_1(\alpha)$ and collected in the following lemma.
\begin{lemma}\label{lem:b2}
	The following assertions hold:
	\begin{enumerate}
		\item[{\rm (i)}]  $\beta_2(\alpha) \in [\lambda_2(q), +\infty)$ for any $\alpha \in \mathbb{R}$;
		\item[{\rm (ii)}] $\beta_2(\alpha)$ is nondecreasing for $\alpha \in \mathbb{R}$, and $\beta_2(\alpha)=\beta_2(\lambda_1(p))=\lambda_2(q)$ for $\alpha\leq \lambda_1(p)$;
		\item[{\rm (iii)}]
		$\beta_2(\alpha)$ is  right-continuous for $\alpha \in \mathbb{R}$;
		\item[{\rm (iv)}] $\mathcal{M}_{\alpha, \beta}^2 \neq \emptyset$ if and only if $\alpha \in \mathbb{R}$ and $\beta > \beta_2(\alpha)$.
	\end{enumerate}
\end{lemma}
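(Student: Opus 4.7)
The plan is to mirror the proof of Lemma~\ref{lem:b1} almost verbatim, with the dualities sup~$\leftrightarrow$~inf and ``$H_\alpha(u^\pm)<0$''~$\leftrightarrow$~``$H_\alpha(u^\pm)>0$'' swapped throughout. The monotonicity direction is still nondecreasing because the admissible set now shrinks as $\alpha$ grows (larger $\alpha$ makes $H_\alpha(u^\pm)>0$ harder to satisfy), and one takes infimum over the smaller set. For the same reason, the openness of the condition ``$H_\alpha(u^\pm)>0$'' in $\alpha$ will give right-continuity rather than left-continuity.

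For assertions (i) and (ii), the nontrivial point is to establish that $\mathcal{B}_2(\alpha)\neq\emptyset$ for \emph{every} $\alpha\in\mathbb{R}$, including arbitrarily large $\alpha$. I would fix two disjoint balls $B_1,B_2\subset\Omega$, pick $\phi\in C_c^\infty(B_1)$ with $\phi\geq 0$ and $\psi\in C_c^\infty(B_2)$ with $\psi\leq 0$, and rescale them to shrink their supports; since the $p$-Rayleigh quotient of such a bump scales like $\varepsilon^{-p}$, the sign-changing test function can be driven into $\mathcal{B}_2(\alpha)$ for any $\alpha$. The lower bound $\beta_2(\alpha)\geq\lambda_2(q)$ is immediate from the variational characterization \eqref{eq:char-2nd-ev} applied with $r=q$ to any $u\in\mathcal{B}_2(\alpha)$. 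Monotonicity follows from $\mathcal{B}_2(\alpha_2)\subset\mathcal{B}_2(\alpha_1)$ when $\alpha_1\leq\alpha_2$. For $\alpha\leq\lambda_1(p)$, Lemma~\ref{lem:1}(i) forces $H_\alpha(u^\pm)>0$ for every sign-changing $u\in\W$ (the equality case would require $u^\pm=t\varphi_p$, which is precluded by strict positivity of $\varphi_p$), so $\mathcal{B}_2(\alpha)$ equals the set of all sign-changing functions in $\W$, and then \eqref{eq:char-2nd-ev} with $r=q$ gives $\beta_2(\alpha)=\lambda_2(q)$.

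For assertion (iii), fix $\alpha_0\in\mathbb{R}$. Monotonicity yields $\lim_{\alpha\to\alpha_0+0}\beta_2(\alpha)\geq\beta_2(\alpha_0)$, so the task is the reverse bound. Given $\varepsilon>0$, I pick $u_\varepsilon\in\mathcal{B}_2(\alpha_0)$ with $\max\bigl\{\|\nabla u_\varepsilon^+\|_q^q/\|u_\varepsilon^+\|_q^q,\ \|\nabla u_\varepsilon^-\|_q^q/\|u_\varepsilon^-\|_q^q\bigr\}\leq\beta_2(\alpha_0)+\varepsilon$. The identity $H_\alpha(u)=H_{\alpha_0}(u)+(\alpha_0-\alpha)\|u\|_p^p$ together with the strict inequality $H_{\alpha_0}(u_\varepsilon^\pm)>0$ shows that $u_\varepsilon\in\mathcal{B}_2(\alpha)$ for all $\alpha$ in some right-neighborhood $[\alpha_0,\alpha_0+\delta)$, yielding $\beta_2(\alpha)\leq\beta_2(\alpha_0)+\varepsilon$ on that neighborhood.

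For assertion (iv) I would argue exactly as in Lemma~\ref{lem:b1}(v). If $\beta>\beta_2(\alpha)$, the definition of $\beta_2(\alpha)$ provides $u\in\mathcal{B}_2(\alpha)$ satisfying the strict inequalities $H_\alpha(u^\pm)>0$ and $G_\beta(u^\pm)<0$; Lemma~\ref{lemm:E>0} then supplies $t^\pm>0$ with $t^\pm u^\pm\in\mathcal{N}_{\alpha,\beta}$, so $t^+u^+-t^-u^-\in\mathcal{M}_{\alpha,\beta}^2$. Conversely, any $u\in\mathcal{M}_{\alpha,\beta}^2$ lies in $\mathcal{B}_2(\alpha)$ by definition, and the Nehari constraint $H_\alpha(u^\pm)+G_\beta(u^\pm)=0$ forces $G_\beta(u^\pm)<0$, giving $\beta_2(\alpha)\leq\max\{\ldots\}<\beta$. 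The principal obstacle I anticipate is the unconditional nonemptiness of $\mathcal{B}_2(\alpha)$ for large $\alpha$; the rescaling/concentration construction above resolves it cleanly, after which the remaining assertions reduce to routine modifications of the arguments already recorded for $\beta_1$.
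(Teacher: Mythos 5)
Your proposal is correct and follows the paper's route: the paper itself proves only (i) and (ii) explicitly and dispatches (iii) and (iv) with ``can be proved in much the same way as in Lemma~\ref{lem:b1}'', which is precisely the sup/inf--swapped transcription you carry out. Two points of comparison are worth recording. First, for the nonemptiness of $\mathcal{B}_2(\alpha)$ at arbitrarily large $\alpha$ the paper does not rescale bump functions; it simply observes that any eigenfunction of $-\Delta_p$ associated with an eigenvalue $\lambda>\max\{\alpha,\lambda_1(p)\}$ lies in $\mathcal{B}_2(\alpha)$ (such $\lambda$ exist because $\lambda_k(p)\to+\infty$). Your concentration construction is equally valid and arguably more elementary, since it avoids invoking the higher eigenvalues; either way the conclusion $\beta_2(\alpha)<+\infty$ then needs no further work because the test function itself provides a finite value for the infimum.

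The one place where your argument is too quick is the equality $\beta_2(\alpha)=\lambda_2(q)$ for $\alpha\leq\lambda_1(p)$. Having identified $\mathcal{B}_2(\alpha)$ with $\{u\in\W:\ u^\pm\not\equiv 0\}$, you conclude directly from \eqref{eq:char-2nd-ev} with $r=q$; but that characterization takes the infimum over sign-changing $u\in W_0^{1,q}$, whereas $\beta_2$ is an infimum over the strictly smaller space $W_0^{1,p}$ (recall $q<p$), so \eqref{eq:char-2nd-ev} only yields $\beta_2(\alpha)\geq\lambda_2(q)$ --- which is assertion (i) again, not the equality. The paper closes this by testing with a second eigenfunction $\varphi_{2,q}$ of $-\Delta_q$, which belongs to $C_0^{1,\gamma}(\overline{\Omega})\subset\W$ by elliptic regularity and realizes the value $\lambda_2(q)$ in the defining infimum of $\beta_2(\lambda_1(p))$. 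This is a genuine (if easily repaired) omission: without the regularity of $\varphi_{2,q}$, or some density argument transporting the $W_0^{1,q}$-infimum into $\W$, the upper bound $\beta_2(\lambda_1(p))\leq\lambda_2(q)$ does not follow. The remainder of your proof --- monotonicity via nesting of the admissible sets, right-continuity via the openness of the strict constraint $H_{\alpha_0}(u_\varepsilon^\pm)>0$ in $\alpha$, and both implications of (iv) via Lemma~\ref{lemm:E>0} and the $p$-homogeneity of $H_\alpha$ --- is sound and matches the intended argument.
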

\begin{proof}
(i) It is easy to see that for any $\alpha \in \mathbb{R}$ the admissible set $\mathcal{B}_2(\alpha)$ is nonempty. For example, any eigenfunction corresponding to $\lambda \in \sigma(-\Delta_p)$ belongs to $\mathcal{B}_2(\alpha)$ provided $\lambda> \max\{\alpha,\lambda_1(p)\}$. 
Hence, $\beta_2(\alpha) < +\infty$. 
On the other hand, the definition of $\beta_2(\alpha)$ and characterization  \eqref{eq:char-2nd-ev} with $r=q$ directly imply that $\beta_2(\alpha) \geq \lambda_2(q)$ for any $\alpha \in \mathbb{R}$, since $\mathcal{B}_2(\alpha) \subset W_0^{1,p}\subset W_0^{1,q}$.

(ii) If $\alpha_1 \le \alpha_2$, then $\mathcal{B}_2(\alpha_2) \subset \mathcal{B}_2(\alpha_1)$, which leads to the desired monotonicity. 
Since \textit{any} sign-changing function $w \in \W$ satisfies $H_{\lambda_1(p)}(w^\pm) > 0$ (see Lemma~\ref{lem:1}), we get
$\mathcal{B}_2(\alpha)= 
\mathcal{B}_2(\lambda_1(p))=\{u\in W^{1,p}_0:~ u^\pm \not\equiv 0\}$ 
for all $\alpha\leq \lambda_1(p)$, and hence 
$\beta_2(\alpha)=\beta_2(\lambda_1(p))$ for all $\alpha\leq \lambda_1(p)$. 
In order to show that  $\beta_2(\lambda_1(p))=\lambda_2(q)$, 
let us recall that any eigenfunction $\varphi_{2,q}$ corresponding to $\lambda_2(q)$ belongs to $C_0^{1, \gamma}(\overline{\Omega})$ (see Subsection~\ref{subsec:notations}). Hence,  $\varphi_{2,q}\in \mathcal{B}_2(\lambda_1(p))$ and, consequently, 
$$
\lambda_2(q)
=\max \left\{ \frac{\intO |\nabla \varphi_{2,q}^+|^q \, dx}{\intO |\varphi_{2,q}^+|^q \, dx}, \frac{\intO |\nabla \varphi_{2,q}^-|^q \, dx}{\intO |\varphi_{2,q}^-|^q \, dx} \right\}
\ge \beta_2(\lambda_1(p)) \geq \lambda_2(q), 
$$
where the equality follows from 
\eqref{eq:char-2nd-ev} with $r=q$, and the last inequality is given by the assertion (i).

The assertions (iii) and (iv) can be proved in much the same way as in Lemma~\ref{lem:b1}.
\end{proof}

For the further proof of the existence of nodal solutions to $(GEV;\alpha,\beta)$ in $\mathcal{M}_{\alpha,\beta}^1$, 
let us study the properties of the
 critical value \eqref{bK} defined as
\begin{equation*}
\beta_{\mathcal{L}}(\alpha) := 
\inf \left\{\,
\min \left\{ \frac{\intO |\nabla u^+|^q \, dx}{\intO |u^+|^q \, dx}, 
\frac{\intO |\nabla u^-|^q \, dx}{\intO |u^-|^q \, dx} \right\} 
\,:\,u\in \mathcal{B}_{\mathcal{L}}(\alpha) \right\},
\end{equation*}
where the admissible set $\mathcal{B}_{\mathcal{L}}(\alpha)$ is given by \eqref{BK}, or, equivalently,
\begin{equation*}
\mathcal{B}_{\mathcal{L}}(\alpha) =
\left\{
u \in W_0^{1,p}:~
u^\pm \not\equiv 0,~
H_\alpha(u^+) \leq 0,~ 
H_\alpha(u^-) \leq 0\,
\right\}.
\end{equation*}
We put $\beta_\mathcal{L}(\alpha) = +\infty$ whenever $\mathcal{B}_{\mathcal{L}}(\alpha) = \emptyset$. 
Arguing as in the proof of Lemma~\ref{lem:easy-fact2}, it can be shown that $\mathcal{B}_{\mathcal{L}}(\alpha) = \emptyset$ if and only if $\alpha < \lambda_2(p)$. 
Note that $\mathcal{M}_{\alpha,\beta}^1 \subset \mathcal{B}_{1}(\alpha) \subset \mathcal{B}_{\mathcal{L}}(\alpha)$.

First we give two auxiliary results.
\begin{lemma}\label{lem:normalized1} 
	Let $\alpha>0$, $\beta \in \mathbb{R}$, and $\{u_n\}_{n \in \mathbb{N}}$ be an arbitrary sequence in $\mathcal{B}_{\mathcal{L}}(\alpha)$ (or in $\mathcal{M}_{\alpha,\beta}^1$). Denote by $\{v_n\}_{n \in \mathbb{N}}$ a sequence normalized as follows:
	\begin{equation}\label{normlizedseq}
	v_n:=\frac{u_n^+}{\|\nabla u_n^+\|_p}-\frac{u_n^-}{\|\nabla u_n^-\|_p}, \quad n \in \mathbb{N}.
	\end{equation}
	Then the following assertions hold:
	\begin{itemize}
		\item[{\rm (i)}] $v_n \in \mathcal{B}_{\mathcal{L}}(\alpha)$ (or $v_n \in \mathcal{M}_{\alpha,\beta}^1$) for all $n \in \mathbb{N}$;
		\item[{\rm (ii)}] $v_n$ converges, up to a subsequence, to some $v_0 \in \W$ weakly in $\W$ and strongly in $L^p(\Omega)$;
		\item[{\rm (iii)}] $v_0^\pm \not\equiv 0$ and $H_\alpha(v_0^\pm) \leq 0$, that is, $v_0 \in \mathcal{B}_{\mathcal{L}}(\alpha)$. 
	\end{itemize}
\end{lemma}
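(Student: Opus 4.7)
The plan is to combine scale invariance of the defining Rayleigh-quotient conditions with weak compactness and lower semicontinuity in $W_0^{1,p}$. The normalization is designed precisely so that $\|\nabla v_n^+\|_p = \|\nabla v_n^-\|_p = 1$, fixing the gradient $p$-norms without disturbing any of the sign or ratio conditions that appear in $\mathcal{B}_\mathcal{L}(\alpha)$ or $\mathcal{M}_{\alpha,\beta}^1$.

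First I would verify (i): the ratios $\intO|\nabla u^\pm|^p\,dx / \intO|u^\pm|^p\,dx$ are homogeneous of degree zero under independent positive rescaling of $u^+$ and $u^-$, so the inequality $\max\{\cdot,\cdot\}\leq\alpha$ passes directly from $u_n$ to $v_n$, yielding $v_n \in \mathcal{B}_\mathcal{L}(\alpha)$. For the $\mathcal{M}_{\alpha,\beta}^1$ version one additionally checks preservation of the strict sign condition $H_\alpha(v_n^\pm)<0$ and of the Nehari identities under the same componentwise rescaling.

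Next, (ii) is routine: the disjoint supports of $u_n^+$ and $u_n^-$ give $\|\nabla v_n\|_p^p = 2$, so $\{v_n\}$ is bounded in $W_0^{1,p}$; reflexivity extracts $v_n\rightharpoonup v_0$ along a subsequence, and the compact Sobolev embedding $W_0^{1,p}\hookrightarrow L^p(\Omega)$ promotes this to strong $L^p$-convergence.

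Finally, for (iii) I would pass to a further subsequence converging a.e.\ in $\Omega$, so that $v_n^\pm\to v_0^\pm$ both a.e.\ and in $L^p$, and (using the uniform bound $\|\nabla v_n^\pm\|_p=1$) also weakly in $W_0^{1,p}$. The constraint $H_\alpha(v_n^\pm)\leq 0$ reads $1\leq\alpha\|v_n^\pm\|_p^p$, and combining this with the strong $L^p$-convergence yields $\|v_0^\pm\|_p^p\geq 1/\alpha$; here the hypothesis $\alpha>0$ is essential and is the one place it is used, forcing $v_0^\pm\not\equiv 0$. To conclude, weak lower semicontinuity of $u\mapsto\|\nabla u\|_p^p$ gives $\|\nabla v_0^\pm\|_p^p\leq 1$, so $H_\alpha(v_0^\pm)\leq 1-\alpha\|v_0^\pm\|_p^p=\lim_n H_\alpha(v_n^\pm)\leq 0$. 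The only step requiring real care is the identification of the weak $W_0^{1,p}$-limit of $v_n^\pm$ as genuinely $v_0^\pm$, which follows from the a.e.\ identification along the subsequence together with continuity of the maps $u\mapsto u^\pm$ in $L^p(\Omega)$.
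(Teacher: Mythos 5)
Your argument follows the paper's proof essentially verbatim: (i) by zero-degree homogeneity of the Rayleigh quotients under componentwise rescaling, (ii) by boundedness, reflexivity, and the compact embedding, and (iii) by combining $H_\alpha(v_n^\pm)\leq 0 \Leftrightarrow \|v_n^\pm\|_p^p\geq 1/\alpha$ with strong $L^p$-convergence and weak lower semicontinuity of the gradient norm. Your extra remark on identifying the weak $W_0^{1,p}$-limit of $v_n^\pm$ with $v_0^\pm$ via a.e.\ convergence and the continuity of $u\mapsto u^\pm$ is a point the paper leaves implicit, and it is handled correctly.

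One sub-claim in your treatment of (i) would not survive an actual check: you assert that the Nehari identities are preserved "under the same componentwise rescaling." They are not. If $H_\alpha(u_n^\pm)+G_\beta(u_n^\pm)=0$ and $t^\pm:=\|\nabla u_n^\pm\|_p$, then
$H_\alpha(v_n^\pm)+G_\beta(v_n^\pm)=(t^\pm)^{-p}H_\alpha(u_n^\pm)+(t^\pm)^{-q}G_\beta(u_n^\pm)=G_\beta(u_n^\pm)\left((t^\pm)^{-q}-(t^\pm)^{-p}\right)$,
which vanishes only when $t^\pm=1$ (or $G_\beta(u_n^\pm)=0$, excluded on $\mathcal{M}_{\alpha,\beta}^1$). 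So $v_n\notin\mathcal{M}_{\alpha,\beta}^1$ in general; $H_\alpha$ and $G_\beta$ have different homogeneities precisely because $p\neq q$. To be fair, the paper's own proof is equally cavalier here (it invokes only the $p$-homogeneity of $H_\alpha$ and never addresses $G_\beta$), and the defect is harmless: what is preserved, and what all subsequent applications (Lemma~\ref{bdd-minimizing}, Theorem~\ref{thm:minimizer}) actually use, is $v_n\in\mathcal{B}_1(\alpha)\subset\mathcal{B}_\mathcal{L}(\alpha)$ together with the scaling relations $H_\alpha(v_n^\pm)=H_\alpha(u_n^\pm)/(t^\pm)^p$ and $G_\beta(v_n^\pm)=G_\beta(u_n^\pm)/(t^\pm)^q$. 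You should either weaken the parenthetical conclusion accordingly or note explicitly that only the sign conditions, not the Nehari constraints, pass to $v_n$.
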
 
\begin{proof} 
	Obviously, $v_n^\pm = u_n^\pm/\|\nabla u_n^\pm\|_p$, and hence the assertion (i) follows from the $p$-homogeneity of $H_\alpha$.
	The assertion (ii) is a consequence of the boundedness of $\{v_n\}_{n \in \mathbb{N}}$ in $\W$. 
	Since $H_\alpha (v_n^\pm) \leq 0$ for all $n \in \mathbb{N}$, we get $\|v_n^\pm\|_p^p \geq 1/\alpha$, whence $v_0^\pm \not\equiv 0$ a.e.\ in $\Omega$, due to the strong convergence of $v_n$ in $L^p(\Omega)$.
	Moreover, using the weak lower semicontinuity of the $\W$-norm, 
	we conclude that $H_\alpha(v_0^\pm) \leq \liminf\limits_{n \to +\infty} 
	H_\alpha(v_n^\pm) \leq 0$. This is the assertion (iii). 
\end{proof}

\begin{proposition}\label{prop:exist-minimizer}
	For any $\alpha \geq \lambda_2(p)$ there exists a minimizer 
$u_\alpha \in \mathcal{B}_{\mathcal{L}}(\alpha)$ of 
$\beta_\mathcal{L}(\alpha)$. 
\end{proposition}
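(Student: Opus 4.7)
The plan is a direct minimization: take a minimizing sequence for $\beta_{\mathcal{L}}(\alpha)$, normalize it via Lemma~\ref{lem:normalized1}, extract a weak limit in $W_0^{1,p}$, and verify by lower semicontinuity that this limit attains the infimum. Before starting, I would record two bookkeeping facts that justify the construction: $\mathcal{B}_{\mathcal{L}}(\alpha)\neq\emptyset$ for every $\alpha\geq\lambda_2(p)$, since any second eigenfunction $\varphi_{2,p}$ of $-\Delta_p$ satisfies $H_{\lambda_2(p)}(\varphi_{2,p}^{\pm})=0$ by testing the eigenvalue equation against $\varphi_{2,p}^{\pm}$, and $\mathcal{B}_{\mathcal{L}}$ is monotone in $\alpha$; and the infimum is bounded below by $\lambda_1(q)>0$ thanks to the variational characterization~\eqref{charact-1st-ev} applied to $u^+$ and $u^-$.

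First I would pick a minimizing sequence $\{u_n\}\subset\mathcal{B}_{\mathcal{L}}(\alpha)$ and pass to the normalized sequence $v_n:=u_n^+/\|\nabla u_n^+\|_p-u_n^-/\|\nabla u_n^-\|_p$. The crucial observation is that the Rayleigh quotients $\|\nabla u^\pm\|_q^q/\|u^\pm\|_q^q$ are invariant under rescaling $u^+$ and $u^-$ separately, while $H_\alpha$ is $p$-homogeneous; thus by Lemma~\ref{lem:normalized1}(i) the sequence $\{v_n\}$ still lies in $\mathcal{B}_{\mathcal{L}}(\alpha)$ and still realizes the infimum. Next I would invoke Lemma~\ref{lem:normalized1}(ii)--(iii) to extract a subsequence, not relabeled, along which $v_n\rightharpoonup v_0$ in $W_0^{1,p}$, $v_n\to v_0$ strongly in $L^p(\Omega)$, and $v_0\in\mathcal{B}_{\mathcal{L}}(\alpha)$. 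Passing, if necessary, to a further subsequence I may assume $v_n\to v_0$ almost everywhere in $\Omega$, hence $v_n^\pm\to v_0^\pm$ a.e.; the identity $\|\nabla v_n^\pm\|_p=1$ then makes $\{v_n^\pm\}$ bounded in $W_0^{1,p}$, so $v_n^\pm\rightharpoonup v_0^\pm$ in $W_0^{1,p}$, and the compact embedding $W_0^{1,p}(\Omega)\hookrightarrow L^q(\Omega)$ (valid on a bounded domain for $q<p$) yields $v_n^\pm\to v_0^\pm$ strongly in $L^q(\Omega)$, with $\|v_0^\pm\|_q>0$ by Lemma~\ref{lem:normalized1}(iii).

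The delicate step is the final passage to the limit: the lower semicontinuity of $\min\{a_n,b_n\}$ does not in general produce $\min\{a_0,b_0\}\leq\liminf\min\{a_n,b_n\}$, and this is the main technical obstacle. To circumvent it I would extract one further subsequence on which the minimum is consistently attained by the same sign, say $+$, so that
\[
\frac{\|\nabla v_n^+\|_q^q}{\|v_n^+\|_q^q}=\min\!\left\{\frac{\|\nabla v_n^+\|_q^q}{\|v_n^+\|_q^q},\frac{\|\nabla v_n^-\|_q^q}{\|v_n^-\|_q^q}\right\}\longrightarrow\beta_{\mathcal{L}}(\alpha).
\]
Combining weak lower semicontinuity of $\|\nabla\,\cdot\,\|_q$ in the numerator with strong $L^q$-convergence in the denominator then gives
\[
\frac{\|\nabla v_0^+\|_q^q}{\|v_0^+\|_q^q}\leq\liminf_{n\to\infty}\frac{\|\nabla v_n^+\|_q^q}{\|v_n^+\|_q^q}=\beta_{\mathcal{L}}(\alpha),
\]
so a fortiori the minimum of the two quotients for $v_0$ is at most $\beta_{\mathcal{L}}(\alpha)$. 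The reverse inequality is immediate from $v_0\in\mathcal{B}_{\mathcal{L}}(\alpha)$ and the definition of the infimum, which identifies $u_\alpha:=v_0$ as the sought minimizer.
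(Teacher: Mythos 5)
Your proposal is correct and follows essentially the same route as the paper: a minimizing sequence normalized via Lemma~\ref{lem:normalized1}, whose weak limit $v_0$ lies in $\mathcal{B}_{\mathcal{L}}(\alpha)$ and attains the infimum by lower semicontinuity. The only difference is that you spell out the passage to the limit for the minimum of the two Rayleigh quotients (which the paper states in one line); your subsequence argument is exactly the standard proof that a minimum of two sequentially lower semicontinuous functionals is again sequentially lower semicontinuous, so the step you flag as delicate in fact holds automatically.
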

\begin{proof}
	If $\alpha \geq \lambda_2(p)$, then $\mathcal{B}_{\mathcal{L}}(\alpha)$ is nonempty, since $H_\alpha(\varphi_{2,p}^\pm) \leq 0$ for any second eigenfunction $\varphi_{2,p}$ corresponding to  $\lambda_2(p)$. 
	Thus, there exists a minimizing sequence $\{u_n\}_{n \in \mathbb{N}} \subset \mathcal{B}_{\mathcal{L}}(\alpha)$ for $\beta_\mathcal{L}(\alpha)$.
	Consider the corresponding normalized sequence $\{v_n\}_{n \in \mathbb{N}} \subset \mathcal{B}_{\mathcal{L}}(\alpha)$ given by \eqref{normlizedseq}.
	Lemma~\ref{lem:normalized1} implies that the limit point $v_0 \in \mathcal{B}_{\mathcal{L}}(\alpha)$, and hence
	$$
	\beta_\mathcal{L}(\alpha) \leq \min \left\{ \frac{\intO |\nabla v_0^+|^q \, dx}{\intO |v_0^+|^q \, dx}, \frac{\intO |\nabla v_0^-|^q \, dx}{\intO |v_0^-|^q \, dx} \right\} \leq \liminf_{n \to +\infty}\,
	\min \left\{ \frac{\intO |\nabla v_n^+|^q \, dx}{\intO |v_n^+|^q \, dx}, \frac{\intO |\nabla v_n^-|^q \, dx}{\intO |v_n^-|^q \, dx} \right\} = \beta_\mathcal{L}(\alpha),
	$$
	which means that $v_0$ is a minimizer of $\beta_\mathcal{L}(\alpha)$. 
\end{proof}

\begin{remark}
The definition \eqref{bK} of $\beta_\mathcal{L}(\alpha)$ is equivalent to
\begin{align}\label{betaK1}
\beta_{\mathcal{L}}(\alpha) 
:=\inf \left\{\,
\frac{\intO |\nabla u^+|^q \, dx}{\intO |u^+|^q \, dx} 
:~ u \in \mathcal{B}_{\mathcal{L}}(\alpha) \right\}.
\end{align}
This can be seen by testing $\beta_\mathcal{L}(\alpha)$ either with the corresponding minimizer $u_{\alpha}$ or with $-u_{\alpha}$.
\end{remark} 

Consider now the critical value 
\begin{equation}\label{eq:bL*}
\beta_\mathcal{L}^*:=\inf\left\{
\frac{\intO |\nabla \varphi^+|^q \, dx}{\intO |\varphi^+|^q \, dx}:~ 
\varphi\in ES(p,\lambda_2(p))\setminus\{0\}\right\}.
\end{equation}
The following lemma contains the main properties of $\beta_{\mathcal{L}}(\alpha)$.
\begin{lemma}\label{lem:bK}
	The following assertions hold:
	\begin{enumerate}
		\item[{\rm (i)}] $\beta_{\mathcal{L}}(\alpha) = +\infty$  for any $\alpha < \lambda_2(p)$, and  $\beta_{\mathcal{L}}(\alpha) \in (\lambda_1(q),\beta_{\mathcal{L}}^*]$ for any $\alpha \geq \lambda_2(p)$;
		\item[{\rm (ii)}] $\beta_{\mathcal{L}}(\alpha)$ is nonincreasing for $\alpha \in [\lambda_2(p), +\infty)$;
		\item[{\rm (iii)}] $\beta_{\mathcal{L}}(\alpha)$ is right-continuous for $\alpha \in [\lambda_2(p), +\infty)$; 
		\item[{\rm (iv)}]
		$\mathcal{K}_{\alpha, \beta} \not= \emptyset$ if and only if $\alpha \ge \lambda_2(p)$ and $\beta \ge \beta_\mathcal{L}(\alpha)$, where $\mathcal{K}_{\alpha, \beta}$ is defined by 
		\begin{align}
		\mathcal{K}_{\alpha,\beta} 
		&:= \{
		u \in W_0^{1,p}:~ u^\pm \not\equiv 0, ~H_\alpha(u^+) \leq 0,~
		H_\alpha(u^-) \leq 0, ~ G_\beta(u^+) \leq 0 \, \}
		\label{def:K} \\ 
		&~ =\mathcal{B}_{\mathcal{L}}(\alpha)\cap\{u \in W_0^{1,p}:~ G_\beta(u^+) \leq 0\}. 
		\nonumber
		\end{align} 
	\end{enumerate}
\end{lemma}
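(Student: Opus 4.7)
The plan is to prove the four assertions in sequence, relying at several points on Proposition~\ref{prop:exist-minimizer} in order to work with an actual minimizer rather than a minimizing sequence.

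The emptiness of $\mathcal{B}_\mathcal{L}(\alpha)$ for $\alpha<\lambda_2(p)$, already noted just before the lemma, disposes of assertion (i) on that range via the convention $\beta_\mathcal{L}(\alpha)=+\infty$. For $\alpha\geq\lambda_2(p)$, the upper bound $\beta_\mathcal{L}(\alpha)\leq\beta_\mathcal{L}^*$ is obtained by testing with any second eigenfunction $\varphi$ of $-\Delta_p$: inserting $\varphi^\pm$ into the eigenvalue equation gives $\|\nabla\varphi^\pm\|_p^p=\lambda_2(p)\|\varphi^\pm\|_p^p$, hence $H_\alpha(\varphi^\pm)\leq 0$ and $\varphi\in\mathcal{B}_\mathcal{L}(\alpha)$; replacing $\varphi$ with $-\varphi$ exchanges $\varphi^+$ and $\varphi^-$, so the infimum of $\|\nabla\varphi^+\|_q^q/\|\varphi^+\|_q^q$ over $ES(p,\lambda_2(p))\setminus\{0\}$ coincides with the infimum of the two-sided $\min$, which is precisely $\beta_\mathcal{L}^*$. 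For the lower bound $\beta_\mathcal{L}(\alpha)>\lambda_1(q)$, take the minimizer $u_\alpha$ provided by Proposition~\ref{prop:exist-minimizer}; by \eqref{charact-1st-ev} the min-quotient at $u_\alpha$ is $\geq\lambda_1(q)$, and equality would force $u_\alpha^+=t\varphi_q$ or $u_\alpha^-=t\varphi_q$ by simplicity of $\lambda_1(q)$ (Lemma~\ref{lem:1}(ii)), contradicting $\varphi_q>0$ in $\Omega$ together with $u_\alpha^\mp\not\equiv 0$. Assertion (ii) is immediate from $\mathcal{B}_\mathcal{L}(\alpha_1)\subset\mathcal{B}_\mathcal{L}(\alpha_2)$ whenever $\alpha_1\leq\alpha_2$.

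The main obstacle is right-continuity (iii). Monotonicity already gives $\lim_{\alpha\to\alpha_0^+}\beta_\mathcal{L}(\alpha)\leq\beta_\mathcal{L}(\alpha_0)$, so the work is to establish the reverse. Given $\alpha_n\downarrow\alpha_0\geq\lambda_2(p)$, I take minimizers $u_n\in\mathcal{B}_\mathcal{L}(\alpha_n)$ (Proposition~\ref{prop:exist-minimizer}) and normalize via \eqref{normlizedseq} to $v_n$, which is bounded in $\W$; extract a subsequence with $v_n\to v_0$ weakly in $\W$ and strongly in $L^p$ and $L^q$. A minor extension of Lemma~\ref{lem:normalized1} to the varying-$\alpha_n$ setting places $v_0$ in $\mathcal{B}_\mathcal{L}(\alpha_0)$: the uniform bound $\|v_n^\pm\|_p^p\geq 1/\alpha_n$, coming from $H_{\alpha_n}(v_n^\pm)\leq 0$ and $\|\nabla v_n^\pm\|_p=1$, preserves $v_0^\pm\not\equiv 0$, while weak lower semicontinuity and strong $L^p$-convergence give
\[
\|\nabla v_0^\pm\|_p^p \leq \liminf_n \|\nabla v_n^\pm\|_p^p \leq \liminf_n \alpha_n\|v_n^\pm\|_p^p = \alpha_0\|v_0^\pm\|_p^p.
\]
The same semicontinuity plus strong $L^q$-convergence yield $\|\nabla v_0^\pm\|_q^q/\|v_0^\pm\|_q^q\leq\liminf_n\|\nabla v_n^\pm\|_q^q/\|v_n^\pm\|_q^q$. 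Passing to a further subsequence along which the $\min$ defining $\beta_\mathcal{L}(\alpha_n)$ is attained by the same sign (say $+$), we conclude
\[
\beta_\mathcal{L}(\alpha_0)\leq\frac{\|\nabla v_0^+\|_q^q}{\|v_0^+\|_q^q}\leq\liminf_n \frac{\|\nabla v_n^+\|_q^q}{\|v_n^+\|_q^q}=\liminf_n\beta_\mathcal{L}(\alpha_n),
\]
closing the loop. The delicate point here is ensuring the limit $v_0$ stays in the admissible set with $v_0^\pm\not\equiv 0$ despite $\alpha_n$ varying.

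For (iv), the forward direction is immediate: if $u\in\mathcal{K}_{\alpha,\beta}$, then $u\in\mathcal{B}_\mathcal{L}(\alpha)$ (so $\alpha\geq\lambda_2(p)$), and $G_\beta(u^+)\leq 0$ reads $\beta\geq\|\nabla u^+\|_q^q/\|u^+\|_q^q\geq\beta_\mathcal{L}(\alpha)$ via the equivalent form \eqref{betaK1}. For the converse, invoke once more the minimizer $u_\alpha$ from Proposition~\ref{prop:exist-minimizer}; after possibly replacing $u_\alpha$ by $-u_\alpha$ (which swaps the two quotients), we may assume $\beta_\mathcal{L}(\alpha)=\|\nabla u_\alpha^+\|_q^q/\|u_\alpha^+\|_q^q$, whence $\beta\geq\beta_\mathcal{L}(\alpha)$ gives $G_\beta(u_\alpha^+)\leq 0$, and so $u_\alpha\in\mathcal{K}_{\alpha,\beta}$.
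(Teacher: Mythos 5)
Your proposal is correct and follows essentially the same route as the paper's own proof: emptiness of $\mathcal{B}_\mathcal{L}(\alpha)$ below $\lambda_2(p)$ and testing with second eigenfunctions for (i), the inclusion $\mathcal{B}_\mathcal{L}(\alpha_1)\subset\mathcal{B}_\mathcal{L}(\alpha_2)$ for (ii), minimizers at $\alpha_n\downarrow\alpha_0$ plus weak lower semicontinuity for (iii), and the minimizer of $\beta_\mathcal{L}(\alpha)$ together with the reformulation \eqref{betaK1} for (iv). The only cosmetic difference is that where you pass to a subsequence on which the $\min$ is attained by the same sign, the paper invokes the equivalent one-sided formulation \eqref{betaK1} (i.e.\ replaces $u_\alpha$ by $-u_\alpha$), which amounts to the same thing.
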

\begin{proof}
(i) As stated in the proof of Lemma~\ref{lem:easy-fact2}, we easily see that 
$\mathcal{B}_{\mathcal{L}}(\alpha) = \emptyset$ for all $\alpha < \lambda_2(p)$, and hence $\beta_\mathcal{L}(\alpha) = +\infty$.  
If $\alpha\ge \lambda_2(p)$, then $ES(p,\lambda_2(p))\setminus\{0\}\subset \mathcal{B}_{\mathcal{L}}(\alpha)$, and using \eqref{betaK1} we obtain that  $\beta_{\mathcal{L}}(\alpha)\le \beta_{\mathcal{L}}^*$. 
Since any sign-changing function $w \in \W$ satisfies $\|\nabla w^\pm\|_q^q>\lambda_1(q)\|w^\pm\|_q^q$ (see Lemma~\ref{lem:1}), 
taking a minimizer $u_\alpha$ of $\beta_{\mathcal{L}}(\alpha)$ 
(see Proposition~\ref{prop:exist-minimizer}), 
we conclude that  $\beta_{\mathcal{L}}(\alpha)=
\|\nabla u_\alpha^+\|_q^q/\|u_\alpha^+\|_q^q >\lambda_1(q)$ for all $\alpha \geq \lambda_2(p)$. 

The assertion (ii) can be proved as in Lemma~\ref{lem:b1}. 

(iii) Due to the assertion (ii), it is sufficient to show that 
$\beta_{\mathcal{L}}(\alpha_0) \leq 
\lim\limits_{\alpha \to \alpha_0+0}\beta_{\mathcal{L}}(\alpha)$ for all $\alpha_0 \geq \lambda_2(p)$. 
Since $\beta_\mathcal{L}(\alpha)$ is monotone and bounded in a right neighborhood of $\alpha_0$, for any decreasing sequence $\{\alpha_n\}_{n \in \mathbb{N}}$ such that $\alpha_n \to \alpha_0+0$ as $n \to +\infty$ there holds $\lim\limits_{n \to +\infty}\beta_{\mathcal{L}}(\alpha_n) = \lim\limits_{\alpha \to \alpha_0+0}\beta_{\mathcal{L}}(\alpha)$. 
According to Proposition~\ref{prop:exist-minimizer}, 
for each $n \in \mathbb{N}$ there exists a minimizer $u_n\in \mathcal{B}_{\mathcal{L}}(\alpha_n)$ of 
$\beta_{\mathcal{L}}(\alpha_n)$, and we can assume that $\|\nabla u_n^\pm\|_p=1$. 
Thus, passing to an appropriate subsequence, $u_n$ converges to some $u_0 \in \W$ weakly in $\W$ and strongly in $L^p(\Omega)$. 
Moreover, $u_0^\pm \not\equiv 0$ in $\Omega$, since $H_{\alpha_n}(u_n^\pm) \leq 0$ implies that $\|u_n^\pm\|_p^p \geq 1/\alpha_n$. 
Furthermore, due to the weak lower semicontinuity of the $\W$-norm, 
we have 
$H_{\alpha_0}(u_0^\pm) \leq 0$, and hence 
$u_0\in \mathcal{B}_{\mathcal{L}}(\alpha_0)$. 
Consequently, using \eqref{betaK1}, we conclude that  
\begin{align*}
\beta_{\mathcal{L}}(\alpha_0) 
\leq 
\frac{\intO |\nabla u_0^+|^q \, dx}{\intO |u_0^+|^q \, dx}
\leq 
\liminf_{n \to +\infty}
\frac{\intO |\nabla u_n^+|^q \, dx}{\intO |u_n^+|^q \, dx}
= 
\liminf_{n \to +\infty} \beta_{\mathcal{L}}(\alpha_n)
=\lim_{\alpha \to \alpha_0+0}\beta_{\mathcal{L}}(\alpha).
\end{align*}

(iv) 
Assume that $\alpha \geq \lambda_2(p)$ and $\beta \geq \beta_\mathcal{L}(\alpha)$. 
Let $u \in \mathcal{B}_{\mathcal{L}}(\alpha)$ be a minimizer of $\beta_\mathcal{L}(\alpha)$. Then $H_\alpha(u^\pm) \leq 0$ and, in view of \eqref{betaK1}, we may suppose that $G_{\beta_\mathcal{L}(\alpha)}(u^+)=0$. 
Therefore, $G_{\beta}(u^+) \leq G_{\beta_\mathcal{L}(\alpha)}(u^+)=0$ and hence $u \in \mathcal{K}_{\alpha,\beta}$.

Suppose now that there exists $u \in \mathcal{K}_{\alpha, \beta}$ for some $\alpha, 
\beta \in \mathbb{R}$. 
Since $\mathcal{K}_{\alpha, \beta} \subset \mathcal{B}_\mathcal{L}(\alpha)$, the assertion (i) implies that $\alpha \geq \lambda_2(p)$. 
Moreover, since $G_\beta(u^+) \leq 0$, the definition of $\beta_{\mathcal{L}}(\alpha)$ leads to 
$$
\beta_{\mathcal{L}}(\alpha) \leq 
\min \left\{ \frac{\intO |\nabla u^+|^q \, dx}{\intO |u^+|^q \, dx}, 
\frac{\intO |\nabla u^-|^q \, dx}{\intO |u^-|^q \, dx} \right\}
\le 
\frac{\intO |\nabla u^+|^q \, dx}{\intO |u^+|^q \, dx} \le \beta,
$$
which completes the proof.
\end{proof}

In the sequel, it will be convenient to use the notation
\begin{equation}\label{sigmaK}
\Sigma_{\mathcal{L}} := \{ (\alpha, \beta) \in \mathbb{R}^2: \alpha >  \lambda_2(p), ~ \beta < \beta_{\mathcal{L}}(\alpha) \}.
\end{equation}
\begin{remark}\label{remSigmaK}
Due to Lemmas~\ref{lem:b1} and \ref{lem:bK}, 
the definitions of 
$\beta_1^*$ and $\beta_{\mathcal{L}}^*$ 
(see \eqref{def:beta_1} and \eqref{eq:bL*}) 
imply that 
$\beta_{\mathcal{L}}(\alpha) \leq
\beta_{\mathcal{L}}^* \leq
\beta_1^* \leq
\beta_1(\alpha) 
$ 
for all $\alpha > \lambda_2(p)$, and hence $\mathcal{M}_{\alpha, \beta}^1 \neq \emptyset$ for any $(\alpha,\beta) \in \Sigma_{\mathcal{L}}$. 
\end{remark}

\begin{remark}
In the one-dimensional case we have
\begin{equation}\label{remark:beta:N=1}
\beta_1^*=\beta_{\mathcal{L}}^*\in (\lambda_2(q), \lambda_4(q)). 
\end{equation}
Indeed, if $\Omega=(0,T)$, then the second eigenfunction $\varphi_{2,p}$ is given explicitly through the first eigenfunction $\varphi_p$ by 
$\varphi_{2, p}(x) = \varphi_p(2x)$ 
for $x \in (0, T/2]$, and $\varphi_{2,p}(x) = -\varphi_p(2x-T)$ 
for $x \in (T/2, T)$ (see Appendix~A).
Hence, Lemma~\ref{lem:appendixA} in Appendix~A implies that 
$$
\frac{\|(\varphi_{2,p}^+)'\|_q^q}{\|\varphi_{2,p}^+\|_q^q}
=2^q\frac{\|\varphi_p'\|_q^q}{\|\varphi_p\|_q^q}\in 
(2^q\lambda_1(q), 2^q\lambda_2(q))
=(\lambda_2(q), \lambda_4(q)),
$$
and, consequently,  \eqref{remark:beta:N=1} holds. 
\end{remark}

\subsection{Existence of positive energy nodal solutions}
In this subsection, we prove the existence of nodal solutions in the set $\Sigma_{\mathcal{L}}$ defined by \eqref{sigmaK}. 
To this end, we consider the minimization of the energy functional $E_{\alpha, \beta}$ over the set $\mathcal{M}_{\alpha, \beta}^1$. 

First, we prepare the following auxiliary lemma. 

\begin{lemma}\label{bdd-minimizing} 
Let $\{u_n\}_{n \in \mathbb{N}}$ be an arbitrary sequence in $\mathcal{M}_{\alpha,\beta}^1$ and let $\{v_n\}_{n \in \mathbb{N}} \subset \mathcal{M}_{\alpha,\beta}^1$ be a corresponding normalized sequence given by \eqref{normlizedseq} in Lemma~\ref{lem:normalized1}. 
If $\|\nabla u_n^+\|_p \to +\infty$ as $n \to  +\infty$, and 
$\{E_{\alpha,\beta}(u_n^+)\}_{n \in \mathbb{N}}$ is bounded from above, then $G_\beta(v_0^+) \leq 0$. Consequently, $v_0 \in \mathcal{K}_{\alpha, \beta}$.
\end{lemma}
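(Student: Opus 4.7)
The plan is to exploit the Nehari constraint on $u_n^+$ to convert the upper bound on $E_{\alpha,\beta}(u_n^+)$ into a bound on $G_\beta(u_n^+)$, then rescale to the normalized sequence and pass to the limit using semicontinuity.

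First I would observe that, since $u_n \in \mathcal{M}_{\alpha,\beta}^1$, the Nehari identity $H_\alpha(u_n^+) + G_\beta(u_n^+) = 0$ together with $H_\alpha(u_n^+) < 0$ gives $G_\beta(u_n^+) > 0$, and a short computation yields
\begin{equation*}
E_{\alpha,\beta}(u_n^+) = \frac{1}{p}H_\alpha(u_n^+) + \frac{1}{q}G_\beta(u_n^+) = \left(\frac{1}{q}-\frac{1}{p}\right) G_\beta(u_n^+).
\end{equation*}
Because $q<p$, the hypothesis that $\{E_{\alpha,\beta}(u_n^+)\}$ is bounded from above thus forces $\{G_\beta(u_n^+)\}$ to be bounded, and in particular $0 < G_\beta(u_n^+) \leq C$ for some $C>0$.

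Next I would use the $q$-homogeneity of $G_\beta$ on $v_n^+ = u_n^+/\|\nabla u_n^+\|_p$ to write
\begin{equation*}
G_\beta(v_n^+) = \frac{G_\beta(u_n^+)}{\|\nabla u_n^+\|_p^{\,q}}.
\end{equation*}
Since the numerator is bounded and $\|\nabla u_n^+\|_p \to +\infty$, we obtain $G_\beta(v_n^+) \to 0$ as $n\to+\infty$.

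Now I would invoke Lemma~\ref{lem:normalized1}: up to a subsequence, $v_n \rightharpoonup v_0$ in $W_0^{1,p}$, $v_n \to v_0$ strongly in $L^p(\Omega)$, and $v_0 \in \mathcal{B}_\mathcal{L}(\alpha)$; in particular $v_0^+ \not\equiv 0$ and $H_\alpha(v_0^+) \leq 0$. Since $\Omega$ is bounded and $q < p$, the sequence $\{v_n^+\}$ is also bounded in $W_0^{1,q}$ and converges strongly to $v_0^+$ in $L^q(\Omega)$, while weak convergence in $W_0^{1,q}$ (extracted from the bound, with limit identified via a.e. convergence) gives $\|\nabla v_0^+\|_q^q \leq \liminf_{n\to+\infty}\|\nabla v_n^+\|_q^q$. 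Combining these,
\begin{equation*}
G_\beta(v_0^+) = \|\nabla v_0^+\|_q^q - \beta\|v_0^+\|_q^q \leq \liminf_{n\to+\infty}\bigl(\|\nabla v_n^+\|_q^q - \beta\|v_n^+\|_q^q\bigr) = \liminf_{n\to+\infty} G_\beta(v_n^+) = 0.
\end{equation*}
Together with $v_0 \in \mathcal{B}_\mathcal{L}(\alpha)$, this shows $v_0 \in \mathcal{K}_{\alpha,\beta}$ by definition \eqref{def:K}.

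There is no real obstacle here; the only thing to be slightly careful about is that weak convergence in $W_0^{1,p}$ does not automatically deliver weak convergence in $W_0^{1,q}$ of the positive parts, but this is handled by the uniform $W_0^{1,q}$-bound and identification of the a.e.~limit, and the rest is routine lower semicontinuity combined with strong $L^q$-convergence to absorb the sign-free term $\beta\|v_n^+\|_q^q$.
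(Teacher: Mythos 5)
Your proof is correct and follows essentially the same route as the paper: the Nehari identity $E_{\alpha,\beta}(u_n^+)=\frac{p-q}{pq}G_\beta(u_n^+)$ converts the energy bound into a bound on $G_\beta(u_n^+)$, the $q$-homogeneity gives $G_\beta(v_n^+)\to 0$, and weak lower semicontinuity together with Lemma~\ref{lem:normalized1} yields $v_0\in\mathcal{K}_{\alpha,\beta}$. The extra care you take in justifying lower semicontinuity of $G_\beta$ along a sequence that is only weakly convergent in $W_0^{1,p}$ is a detail the paper leaves implicit, but it does not change the argument.
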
 
\begin{proof} 
Assume that $\{E_{\alpha,\beta}(u_n^+)\}_{n \in \mathbb{N}}$ is 
bounded from above. 
Recalling that $-G_\beta(u_n^\pm)=H_\alpha (u_n^\pm) < 0$ 
by $u_n\in\mathcal{M}^1_{\alpha,\beta}$ and noting that the equalities
\begin{equation}\label{E>0}
E_{\alpha, \beta}(u) = \frac{p-q}{pq} G_\beta(u) = 
-\frac{p-q}{pq}  H_\alpha(u)
\end{equation}
hold for all $u \in \mathcal{N}_{\alpha, \beta}$, we get the boundedness of $G_\beta(u_n^+)$: 
$$
0<\frac{p-q}{pq} G_\beta(u_n^+)
=E_{\alpha, \beta}(u_n^+)\le 
\sup_{l\in\mathbb{N}} E_{\alpha,\beta}(u_l^+)<+\infty.
$$
Consequently, the weak lower semicontinuity and the assumption that $\|\nabla u_n^+\|_p \to +\infty$ as $n \to +\infty$ imply
$$
G_\beta(v_0^+) \leq \liminf_{n \to +\infty} G_\beta(v_n^+) 
=\liminf_{n \to +\infty} \frac{G_\beta(u_n^+)}{\|\nabla u_n^+\|_p^q}=0.
$$
Combining this inequality with the fact that $v_0 \in \mathcal{B}_\mathcal{L}(\alpha)$ (see Lemma~\ref{lem:normalized1}), we conclude that $v_0 \in \mathcal{K}_{\alpha, \beta}$.
\end{proof} 

From Remark~\ref{remSigmaK} we know that $\mathcal{M}_{\alpha, \beta}^1 \neq \emptyset$ for any $(\alpha, \beta) \in \Sigma_\mathcal{L}$. Hence, there exists a minimizing sequence for $E_{\alpha, \beta}$ over $\mathcal{M}_{\alpha, \beta}^1$.
Moreover, this minimizing sequence, in fact, converges.

\begin{theorem}\label{thm:minimizer}
	Let $(\alpha, \beta) \in \Sigma_{\mathcal{L}}$. Then there exists a minimizer $u \in \mathcal{M}_{\alpha, \beta}^1$ of $E_{\alpha, \beta}$ over ${\mathcal{M}_{\alpha, \beta}^1}$.
\end{theorem}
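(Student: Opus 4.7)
The plan is to show the infimum $c := \inf_{\mathcal{M}^1_{\alpha,\beta}} E_{\alpha,\beta}$ is attained. By Remark~\ref{remSigmaK} the set $\mathcal{M}^1_{\alpha,\beta}$ is nonempty for $(\alpha,\beta)\in\Sigma_\mathcal{L}$, and identity~\eqref{E>0} together with $G_\beta(u^\pm)>0$ on $\mathcal{M}^1_{\alpha,\beta}$ yields $c\geq 0$. Fix a minimizing sequence $\{u_n\}\subset\mathcal{M}^1_{\alpha,\beta}$ and let $\{v_n\}$ be its normalization~\eqref{normlizedseq}.

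The first task is to establish two-sided bounds $0<\delta_0\leq\|\nabla u_n^\pm\|_p\leq M$ along a subsequence. If $\|\nabla u_n^+\|_p\to+\infty$, then since $E_{\alpha,\beta}(u_n^+)=\frac{p-q}{pq}G_\beta(u_n^+)\leq E_{\alpha,\beta}(u_n)$ is uniformly bounded, Lemma~\ref{bdd-minimizing} applies and gives a limit $v_0\in\mathcal{K}_{\alpha,\beta}$, contradicting $\beta<\beta_\mathcal{L}(\alpha)$ through Lemma~\ref{lem:bK}(iv). For the lower bound, dividing the Nehari relation $H_\alpha(u_n^+)+G_\beta(u_n^+)=0$ by $\|\nabla u_n^+\|_p^p$ yields
\[
\|\nabla u_n^+\|_p^{q-p}\,G_\beta(v_n^+) = -H_\alpha(v_n^+).
\]
The right-hand side $\alpha\|v_n^+\|_p^p-1$ is uniformly bounded, so if $\|\nabla u_n^+\|_p\to 0$ then $G_\beta(v_n^+)\to 0$; weak lower semicontinuity gives $G_\beta(v_0^+)\leq 0$, and together with $v_0\in\mathcal{B}_\mathcal{L}(\alpha)$ from Lemma~\ref{lem:normalized1} this puts $v_0\in\mathcal{K}_{\alpha,\beta}$, producing the same contradiction. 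Analogous arguments handle $u_n^-$.

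With these bounds, pass to a subsequence $u_n\rightharpoonup u_0$ in $W_0^{1,p}$ that converges strongly in $L^p(\Omega)\cap L^q(\Omega)$; then $u_n^\pm\rightharpoonup u_0^\pm$ weakly in $W_0^{1,p}$ and strongly in $L^p\cap L^q$. The Nehari identity combined with $\|\nabla u_n^\pm\|_p\geq\delta_0$ forces $u_0^\pm\not\equiv 0$, since otherwise the right-hand side of Nehari would vanish in the limit while the left is bounded below. Weak lower semicontinuity in the Nehari identity yields $H_\alpha(u_0^\pm)\leq 0$ and $H_\alpha(u_0^\pm)+G_\beta(u_0^\pm)\leq 0$. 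Strict positivity $G_\beta(u_0^\pm)>0$ must then hold: if $G_\beta(u_0^+)\leq 0$ then $u_0\in\mathcal{K}_{\alpha,\beta}$, and if $G_\beta(u_0^-)\leq 0$ then $-u_0\in\mathcal{K}_{\alpha,\beta}$, each contradicting Lemma~\ref{lem:bK}(iv). Hence $H_\alpha(u_0^\pm)<0<G_\beta(u_0^\pm)$.

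Finally, Lemma~\ref{lemm:E>0} provides unique $t^\pm>0$ with $\tilde u:=t^+u_0^+-t^-u_0^-\in\mathcal{M}^1_{\alpha,\beta}$, so $E_{\alpha,\beta}(\tilde u)\geq c$. For the reverse bound, since $u_n^\pm\in\mathcal{N}_{\alpha,\beta}$, the fiber $t\mapsto E_{\alpha,\beta}(tu_n^\pm)$ is maximized at $t=1$ by Lemma~\ref{lemm:E>0}, so $E_{\alpha,\beta}(t^\pm u_n^\pm)\leq E_{\alpha,\beta}(u_n^\pm)$; combining this with the weak lower semicontinuity $E_{\alpha,\beta}(t^\pm u_0^\pm)\leq\liminf_n E_{\alpha,\beta}(t^\pm u_n^\pm)$ and summing over the two signs gives $E_{\alpha,\beta}(\tilde u)\leq\liminf_n E_{\alpha,\beta}(u_n)=c$, so $\tilde u$ is a minimizer. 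The main obstacle throughout is excluding every form of degeneracy of the minimizing sequence---gradient vanishing or blow-up, and the boundary cases $H_\alpha(u_0^\pm)=0$ or $G_\beta(u_0^\pm)\leq 0$---all of which are defeated by the same mechanism: $u_0$, $-u_0$, or a normalized variant would otherwise land in $\mathcal{K}_{\alpha,\beta}$, violating $(\alpha,\beta)\in\Sigma_\mathcal{L}$ through Lemma~\ref{lem:bK}(iv).
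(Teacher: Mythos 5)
Your proposal is correct and follows essentially the same route as the paper's proof: ruling out blow-up of $\|\nabla u_n^\pm\|_p$ via Lemma~\ref{bdd-minimizing}, ruling out vanishing via the normalized sequence, deriving $H_\alpha(u_0^\pm)<0<G_\beta(u_0^\pm)$ from the emptiness of $\mathcal{K}_{\alpha,\beta}$ on $\Sigma_\mathcal{L}$, and projecting $u_0$ back onto $\mathcal{M}^1_{\alpha,\beta}$ with Lemma~\ref{lemm:E>0} together with the fiber-maximum inequality $E_{\alpha,\beta}(t^\pm u_n^\pm)\leq E_{\alpha,\beta}(u_n^\pm)$. The only cosmetic difference is that you deduce $u_0^\pm\not\equiv 0$ from the full Nehari identity rather than directly from $H_\alpha(u_n^\pm)<0$, which is an equivalent argument.
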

\begin{proof}
Assume $\{u_n\}_{n \in \mathbb{N}} \subset \mathcal{M}_{\alpha, \beta}^1$ to be a minimizing sequence for $E_{\alpha, \beta}$ over $\mathcal{M}^1_{\alpha,\beta}$. 
Equalities \eqref{E>0} imply that $E_{\alpha, \beta}(u_n^\pm) > 0$, and hence 
$\{E_{\alpha, \beta}(u_n)\}_{n \in \mathbb{N}}$ and 
$\{E_{\alpha, \beta}(u_n^\pm)\}_{n \in \mathbb{N}}$ are bounded.  
Applying Lemma~\ref{bdd-minimizing}, we conclude that if $\|\nabla u_n^+\|_p \to +\infty$ as $n \to +\infty$, then the limit $v_0$ of a normalized sequence \eqref{normlizedseq} belongs to the set $\mathcal{K}_{\alpha, \beta}$ defined by \eqref{def:K}.
However, $\mathcal{K}_{\alpha, \beta} = \emptyset$ for all $(\alpha, \beta) \in \Sigma_{\mathcal{L}}$, due to  Lemma~\ref{lem:bK} (iv). 
This is a contradiction. 
Thus, $\{u_n^+\}_{n \in \mathbb{N}}$ is bounded in $\W$. 
Since $\{-u_n\}_{n \in \mathbb{N}}$ is also a minimizing sequence for $E_{\alpha, \beta}$ over $\mathcal{M}_{\alpha, \beta}^1$, we apply the same arguments to derive that $(-u_n)^+ \equiv u_n^-$ is bounded in $\W$, which finally yields the boundedness of the whole sequence $\{u_n\}_{n \in \mathbb{N}}$.

Let us now show that $\|\nabla u_n^+\|_p$ and $\|\nabla u_n^-\|_p$ 
do not converge to zero. 
Applying the assertions (ii) and (iii) of Lemma~\ref{lem:normalized1} to the corresponding normalized sequence $\{v_n\}_{n \in \mathbb{N}}$ given by \eqref{normlizedseq}, 
we see that its limit point $v_0$ belongs to $\mathcal{B}_\mathcal{L}(\alpha)$. 
Suppose, by contradiction, that $\|\nabla u_n^+\|_p \to 0$ as $n \to +\infty$.
Then, using the Nehari constraints, we get
\begin{equation*}
0<G_\beta(v_n^+) =-\|\nabla u_n^+\|_p^{p-q} H_\alpha(v_n^+) 
\to 0 
\quad \text{as} \quad
n \to +\infty,
\end{equation*}
since $H_\alpha$ is bounded on a bounded set and $\|\nabla v_n^+\|_p=1$. 
Consequently, $G_\beta(v^+_0) \leq \liminf\limits_{n \to +\infty}G_\beta(v^\pm_n)=0$ and 
$H_\alpha(v^\pm_0) \leq \liminf\limits_{n \to +\infty}H_\alpha(v^\pm_n) \leq 0$, 
i.e., $v_0 \in \mathcal{K}_{\alpha, \beta}$, 
and we obtain a contradiction as above. 
In the case $\|\nabla u_n^-\|_p \to 0$, we
consider $-u_n$ instead of $u_n$, and again obtain a contradiction. 
As a result, there hold
\begin{equation}\label{eq:not-to-zero}
\delta^+:=\inf_{n\in \mathbb{N}}\|\nabla u_n^+\|_p^p>0 \quad \text{and} \quad 
\delta^-:=\inf_{n\in \mathbb{N}}\|\nabla u_n^-\|_p^p>0.
\end{equation}
Now, choosing an appropriate subsequence, we get $u_n \rightharpoonup u_0$ weakly in $W_0^{1,p}$ and $u_n \to u_0$ strongly in $L^p(\Omega)$, where $u_0 \in \W$. 
Inequalities \eqref{eq:not-to-zero} together with $H_\alpha(u_n^\pm)<0$ imply that 
$\|u_n^\pm\|_p^p\ge \delta^\pm/\alpha$ for all $n \in \mathbb{N}$, and hence $u_0^\pm \not\equiv 0$. 
At the same time, the weak lower semicontinuity yields
\begin{equation}\label{eq:H<0} 
H_\alpha(u_0^\pm)\le \liminf_{n \to +\infty} H_\alpha(u_n^\pm) \leq 0.
\end{equation} 
Let us show that 
\begin{equation}\label{eq:H<0<G} 
H_\alpha(u_0^+)<0<G_\beta(u_0^+)\quad {\rm and}\quad 
H_\alpha(u_0^-)<0<G_\beta(u_0^-). 
\end{equation}
Indeed, since $\mathcal{K}_{\alpha,\beta}$ is empty for 
$(\alpha,\beta)\in \Sigma_{\mathcal{L}}$, we see that 
$u_0^+ - u_0^-\not\in \mathcal{K}_{\alpha,\beta}$ 
and $u_0^- -u_0^+\not\in \mathcal{K}_{\alpha,\beta}$. 
This leads to $G_\beta(u_0^\pm)>0$, since $H_\alpha(u_0^\pm)\le 0$  by \eqref{eq:H<0}. 
Finally, from the Nehari constraints and the weak lower semicontinuity we derive that 
\begin{equation*}
H_\alpha(u_0^\pm) + G_\beta(u_0^\pm) 
\leq \liminf_{n \to +\infty} 
\left( H_\alpha(u_n^\pm) + G_\beta(u_n^\pm) \right) =  0.
\end{equation*}
This means that $H_\alpha(u_0^\pm) \leq -G_\beta(u_0^\pm) <0$, 
and hence \eqref{eq:H<0<G} is shown. 

According to \eqref{eq:H<0<G}, 
Lemma~\ref{lemm:E>0} implies the existence of unique maximum points 
$t_0^+>0$ of $E_{\alpha, \beta}(t u_0^+)$ and $t_0^->0$ of  $E_{\alpha, \beta}(t u_0^-)$ with respect to $t > 0$, and 
$t_0^\pm u_0^\pm \in \mathcal{N}_{\alpha, \beta}$.  Accordingly, we conclude from  \eqref{eq:H<0<G} that $t_0^+ u_0^+ - t_0^- u_0^- \in \mathcal{M}_{\alpha,\beta}^1$. 
Therefore, 
\begin{align*}
\inf_{\mathcal{M}_{\alpha, \beta}^1} E_{\alpha, \beta}
\le E_{\alpha, \beta}(t_0^+ u_0^+ - t_0^- u_0^-) 
&\leq \liminf_{n \to +\infty} E_{\alpha, \beta}(t_0^+ u_n^+ - t_0^- u_n^-) 
\\ 
&= 
\liminf_{n \to +\infty} (E_{\alpha, \beta}(t_0^+ u_n^+) + E_{\alpha, \beta}(t_0^- u_n^-))\\
&\le \liminf_{n \to +\infty} (E_{\alpha, \beta}(u_n^+) + E_{\alpha, \beta}(u_n^-)) 
=\liminf_{n \to +\infty} E_{\alpha, \beta}(u_n)
= \inf_{\mathcal{M}_{\alpha, \beta}^1} E_{\alpha, \beta}. 
\end{align*}
The last inequality in this formula is due to the fact that 
$\max\limits_{t>0}E_{\alpha, \beta}(t u_n^\pm) =E_{\alpha, \beta}(u_n^\pm)$, see  Lemma~\ref{lemm:E>0}. 
Consequently, $t_0^+u_0^+- t_0^-u_0^- \in \mathcal{M}_{\alpha, \beta}^1$ 
is the minimizer of $E_{\alpha, \beta}$ over $\mathcal{M}_{\alpha, \beta}^1$.
\end{proof}

\begin{lemma}\label{lem:solution}
	Let $(\alpha, \beta) \in \Sigma_{\mathcal{L}}$. If $u \in \mathcal{M}_{\alpha, \beta}^1$ is a minimizer of $E_{\alpha, \beta}$ over $\mathcal{M}_{\alpha, \beta}^1$, then $u$ is a critical point of $E_{\alpha, \beta}$ on $\W$.
\end{lemma}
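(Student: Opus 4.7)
The plan is to argue by contradiction: assume $E_{\alpha,\beta}'(u) \neq 0$ and construct, via a descent deformation of $u$ composed with a two-parameter projection back onto $\mathcal{M}_{\alpha,\beta}^1$, an element of $\mathcal{M}_{\alpha,\beta}^1$ with strictly smaller energy than $u$, contradicting minimality.

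First I would exploit the fiber structure at $u$. Since $u \in \mathcal{M}_{\alpha,\beta}^1$, we have $H_\alpha(u^\pm) < 0$ and hence $G_\beta(u^\pm) = -H_\alpha(u^\pm) > 0$, so Lemma~\ref{lemm:E>0} guarantees that $t = 1$ is the unique positive maximum of $t \mapsto E_{\alpha,\beta}(t u^\pm)$. Because $u^+$ and $u^-$ have disjoint supports, the fiber map
\[
\psi(s^+, s^-) := E_{\alpha,\beta}(s^+ u^+ - s^- u^-) = E_{\alpha,\beta}(s^+ u^+) + E_{\alpha,\beta}(s^- u^-)
\]
has $(1,1)$ as its unique strict maximum on $(0,+\infty)^2$, with value $E_{\alpha,\beta}(u)$. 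I then pick $\varepsilon > 0$ small enough that $Q_\varepsilon := [1-\varepsilon, 1+\varepsilon]^2 \subset (0,+\infty)^2$ and $\mu := \max_{\partial Q_\varepsilon}\psi < E_{\alpha,\beta}(u)$.

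Next I would invoke the quantitative deformation lemma at $u$. Since $E_{\alpha,\beta}'(u) \neq 0$, for each sufficiently small $\varepsilon_0 \in (0, (E_{\alpha,\beta}(u)-\mu)/2)$ there exist $\delta > 0$ and a continuous $\eta : \W \to \W$ with: (i) $\eta(w) = w$ whenever $|E_{\alpha,\beta}(w) - E_{\alpha,\beta}(u)| \geq 2\varepsilon_0$; (ii) $E_{\alpha,\beta}\circ\eta \leq E_{\alpha,\beta}$; (iii) $E_{\alpha,\beta}(\eta(w)) \leq E_{\alpha,\beta}(u) - \varepsilon_0$ whenever $E_{\alpha,\beta}(w) \leq E_{\alpha,\beta}(u)+\varepsilon_0$ and $\|\nabla(w-u)\|_p \leq \delta$; and (iv) $\|\nabla(\eta(w)-w)\|_p$ can be made arbitrarily small uniformly in $w$. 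Shrinking $\varepsilon$ so that $\|\nabla(s^+ u^+ - s^- u^- - u)\|_p \leq \delta$ on $Q_\varepsilon$, I set $\tilde h(s^+, s^-) := \eta(s^+ u^+ - s^- u^-)$. On $\partial Q_\varepsilon$, property (i) forces $\tilde h(s^+,s^-) = s^+ u^+ - s^- u^-$; on all of $Q_\varepsilon$, property (iii) combined with $\psi \leq E_{\alpha,\beta}(u)$ yields $E_{\alpha,\beta}(\tilde h(s^+,s^-)) \leq E_{\alpha,\beta}(u) - \varepsilon_0 < E_{\alpha,\beta}(u)$.

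The concluding step is to produce $(s_0^+, s_0^-) \in \mathrm{int}(Q_\varepsilon)$ with $\tilde h(s_0^+, s_0^-) \in \mathcal{M}_{\alpha,\beta}^1$. To this end, I apply Brouwer degree theory to the Nehari constraint map $\Phi(s^+, s^-) := (H_\alpha(\tilde h^+) + G_\beta(\tilde h^+), H_\alpha(\tilde h^-) + G_\beta(\tilde h^-))$. On $\partial Q_\varepsilon$, where $\tilde h = s^+ u^+ - s^- u^-$, the identities $H_\alpha(u^\pm) + G_\beta(u^\pm) = 0$ give
\[
\Phi(s^+, s^-) = \bigl(H_\alpha(u^+)((s^+)^p - (s^+)^q),\ H_\alpha(u^-)((s^-)^p - (s^-)^q)\bigr).
\]
Since $H_\alpha(u^\pm) < 0$ and $s \mapsto s^p - s^q$ changes sign from negative to positive across $s = 1$, each component of $\Phi$ changes sign across the corresponding edge of $\partial Q_\varepsilon$; a standard computation then shows $\deg(\Phi, Q_\varepsilon, (0,0)) = 1$, so an interior zero $(s_0^+, s_0^-)$ exists. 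The main obstacle is to ensure that this zero belongs to $\mathcal{M}_{\alpha,\beta}^1$ and not merely to $\mathcal{M}_{\alpha,\beta}$: one must verify both $\tilde h(s_0^+, s_0^-)^\pm \not\equiv 0$ and $H_\alpha(\tilde h(s_0^+, s_0^-)^\pm) < 0$. By further shrinking $\varepsilon$ and using (iv), $\tilde h(s_0^+, s_0^-)$ can be kept arbitrarily $\W$-close to $s_0^+ u^+ - s_0^- u^-$, whose positive and negative parts are nontrivial and satisfy the strict inequality; continuity of $w \mapsto w^\pm$ in $\W$ and of $H_\alpha$ then preserves both conditions, yielding $\tilde h(s_0^+, s_0^-) \in \mathcal{M}_{\alpha,\beta}^1$ with energy strictly below $E_{\alpha,\beta}(u)$, the desired contradiction.
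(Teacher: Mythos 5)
Your argument is correct and is essentially the proof the paper intends: the paper only cites \cite[Lemma~3.2]{BobkovEJQT2014} and \cite[Proposition~3.1]{bartschwillem}, and those references carry out exactly your scheme --- the two-parameter fiber map with strict maximum at $(1,1)$, the quantitative deformation lemma, and a degree (or Miranda-type) argument to relocate a point of $\mathcal{M}_{\alpha,\beta}^1$ with strictly smaller energy. Your additional care in verifying that the interior zero lands in $\mathcal{M}_{\alpha,\beta}^1$ rather than merely in $\mathcal{M}_{\alpha,\beta}$ (via the uniform negativity of $H_\alpha(s^\pm u^\pm)$ on $Q_\varepsilon$ and the smallness of the deformation) is exactly the point that must be checked in this setting, and you handle it correctly.
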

\begin{proof}
The proof can be handled in much the same way as the proof of \cite[Lemma~3.2]{BobkovEJQT2014}, where a variant of the deformation lemma was used in a framework of the problem with indefinite nonlinearities; see also \cite[Proposition~3.1]{bartschwillem}.
\end{proof}

\subsection{Qualitative properties}
In this subsection we show that 
\textit{any} minimizer $u$ of $E_{\alpha, \beta}$ over ${\mathcal{M}_{\alpha, \beta}^1}$ for $(\alpha,\beta)\in\Sigma_\mathcal{L}$ 
has exactly two nodal domains (that is, connected components of $\Omega \setminus u^{-1}(0)$). 

\begin{lemma}\label{lem:twonodal}
	Let $(\alpha, \beta) \in \Sigma_\mathcal{L}$ and let $u \in \mathcal{M}_{\alpha, \beta}^1$ be a minimizer 
	of $E_{\alpha, \beta}$ over ${\mathcal{M}_{\alpha, \beta}^1}$. Then $u$ has exactly two nodal domains.
\end{lemma}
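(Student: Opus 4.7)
I plan to argue by contradiction: suppose $u$ has $k\geq 3$ nodal domains $D_1,\dots,D_k$ and set $u_i:=u\,\chi_{D_i}$. Each $u_i\in W_0^{1,p}$ is sign-constant (using the $C^{1,\gamma}$-regularity of $u$), the supports are pairwise disjoint, and $u=\sum_i u_i$. Since $u$ is a weak solution of $(GEV;\alpha,\beta)$ by Lemma~\ref{lem:solution}, testing \eqref{weaksolution} against each $u_i$ and exploiting the disjoint supports gives $H_\alpha(u_i)+G_\beta(u_i)=0$, i.e.\ $u_i\in\mathcal{N}_{\alpha,\beta}$. From \eqref{E>0} and additivity of $H_\alpha$, $G_\beta$, and $E_{\alpha,\beta}$ across disjoint supports one obtains $E_{\alpha,\beta}(u)=\sum_i E_{\alpha,\beta}(u_i)=-\tfrac{p-q}{pq}\sum_i H_\alpha(u_i)$ and $H_\alpha(u^\pm)=\sum_{i\in I_\pm}H_\alpha(u_i)<0$, where $I_\pm$ index the positive/negative nodal components.

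Since $k\geq 3$, WLOG $|I_+|\geq 2$. The strategy is to detach one positive component and exhibit a competitor in $\mathcal{M}_{\alpha,\beta}^1$ of strictly smaller energy. For $i_0\in I_+$ I would set $v:=u-u_{i_0}$; disjoint-support additivity gives $v^\pm\in\mathcal{N}_{\alpha,\beta}$ and $v^+\not\equiv 0$, so $v\in\mathcal{M}_{\alpha,\beta}$. Choosing $i_0$ to maximise $H_\alpha(u_i)$ over $I_+$ and averaging, $H_\alpha(u_{i_0})\geq H_\alpha(u^+)/|I_+|>H_\alpha(u^+)$, which yields $H_\alpha(v^+)=H_\alpha(u^+)-H_\alpha(u_{i_0})<0$ and hence $v\in\mathcal{B}_{\mathcal{L}}(\alpha)$. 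Because $(\alpha,\beta)\in\Sigma_\mathcal{L}$ forces $\mathcal{K}_{\alpha,\beta}=\emptyset$ by Lemma~\ref{lem:bK}(iv), applying this to both $v$ and $-v$ together with $v^\pm\in\mathcal{N}_{\alpha,\beta}$ yields $G_\beta(v^\pm)>0$, and via the Nehari identity $H_\alpha(v^\pm)<0$, so in fact $v\in\mathcal{M}_{\alpha,\beta}^1$. Thus $E_{\alpha,\beta}(v)=E_{\alpha,\beta}(u)-E_{\alpha,\beta}(u_{i_0})=E_{\alpha,\beta}(u)+\tfrac{p-q}{pq}H_\alpha(u_{i_0})$.

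If $H_\alpha(u_{i_0})<0$, this strictly undercuts the infimum of $E_{\alpha,\beta}$ on $\mathcal{M}_{\alpha,\beta}^1$, contradicting minimality. The principal obstacle is the residual possibility that the maximiser $u_{i_0}$ satisfies $H_\alpha(u_{i_0})\geq 0$---i.e.\ $u$ contains a sign-constant nodal component of nonpositive energy, for which this particular surgery raises the energy. To rule this out I would switch to a two-parameter fibering: rescale a pair $(u_{i_0},u_{i_1})$ of positive components as $(t_1u_{i_0},t_2u_{i_1})$ along the Nehari constraint $H_\alpha(u_{i_0})(t_1^p-t_1^q)+H_\alpha(u_{i_1})(t_2^p-t_2^q)=0$ and expand $E_{\alpha,\beta}$ using $h(t):=t^p/p-t^q/q$ (with $h'(1)=0$, $h''(1)=p-q$); combined with the fact that each $u_i$ is itself a sign-constant weak solution of $(GEV;\alpha,\beta)$ on the subdomain $D_i$ and with the constraint $\mathcal{K}_{\alpha,\beta}=\emptyset$, this should pin down the signs of the $H_\alpha(u_i)$ enough to force the desired contradiction. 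This last step is, in my view, the technical heart of the proof.
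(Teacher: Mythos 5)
Your setup is sound and your first case reproduces the easy half of the paper's argument: decomposing $u$ into sign-constant pieces $u_i\in\mathcal{N}_{\alpha,\beta}$, noting $E_{\alpha,\beta}(u_i)=-\tfrac{p-q}{pq}H_\alpha(u_i)$, and observing that deleting a component with $H_\alpha(u_{i_0})<0$ produces a competitor in $\mathcal{M}^1_{\alpha,\beta}$ of strictly smaller energy. (Your averaging trick to guarantee $H_\alpha(v^+)<0$ after deletion is a nice touch and is essentially equivalent to the paper's ordering $H_\alpha(u_2)\leq H_\alpha(u_1)<0$ in its case (i).) The genuine gap is the residual case $H_\alpha(u_{i_0})\geq 0$, which you correctly identify as the heart of the matter but do not resolve. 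Your proposed remedy --- a two-parameter fibering of a \emph{pair of positive components} $(t_1u_{i_0},t_2u_{i_1})$ along the joint Nehari constraint --- aims at the wrong interaction: the obstruction does not live between two components of the same sign, and in one of the subcases no energy-decreasing surgery exists at all, so no fibering argument can close it.

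Concretely, the paper splits $H_\alpha(u_{i_0})\geq 0$ into three subcases, and two of them require ideas absent from your sketch. If $H_\alpha(u_{i_0})=0$, then $G_\beta(u_{i_0})=0$ and $u_{i_0}$ paired with any negative-$H_\alpha$ component lies in $\mathcal{K}_{\alpha,\beta}$, contradicting $\mathcal{K}_{\alpha,\beta}=\emptyset$. If $H_\alpha(u_{i_0})>0$, the paper pairs $u_{i_0}$ with the \emph{opposite-sign} part $u_3$ (for which $H_\alpha(u_3)<0$) and scales: with $t_0^p=-H_\alpha(u_{i_0})/H_\alpha(u_3)\geq 1$ one gets $H_\alpha(u_{i_0}-t_0u_3)=0$ while $G_\beta(u_{i_0}-t_0u_3)\leq 0$ because $t_0^q\leq t_0^p$ and $G_\beta(u_3)>0$; this again lands in $\mathcal{K}_{\alpha,\beta}$ --- but only when $H_\alpha(u_{i_0})+H_\alpha(u_3)\geq 0$. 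In the remaining subcase $0<H_\alpha(u_{i_0})<-H_\alpha(u_3)$ the function $w$ obtained by \emph{flipping the sign} of one component is again in $\mathcal{M}^1_{\alpha,\beta}$ with $E_{\alpha,\beta}(w)=E_{\alpha,\beta}(u)$, hence is itself a minimizer and a weak solution; since $|w|=|u|$ and $|\nabla w|=|\nabla u|$, adding the two weak formulations shows a single component is a nonnegative solution on all of $\Omega$, and the strong maximum principle forces the other components to vanish. This last step is not an energy comparison at all, so your fibering strategy cannot substitute for it; without the $\mathcal{K}_{\alpha,\beta}$-scaling trick and the sign-flip/maximum-principle argument, the proof is incomplete.
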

\begin{proof}
	Suppose, contrary to our claim, that there exists a minimizer 
	$u\in\mathcal{M}_{\alpha,\beta}^1$ of $E_{\alpha, \beta}$ over ${\mathcal{M}_{\alpha, \beta}^1}$ with (at least) three nodal domains. 	
	We decompose $u$ such that $u = u_1 + u_2 + u_3$, where $u_i \not\equiv 0$ for $i=1,2,3$, and each $u_i$ is of a constant sign on its support. 
	Note that each $u_i \in \mathcal{N}_{\alpha, \beta}$. 
	Indeed, $u_i\in W_0^{1,p}$ (cf. \cite[Lemma 5.6]{cuesta}), and since $u$ is a solution of $(GEV;\alpha,\beta)$ by Lemma~\ref{lem:solution}, we obtain
	\begin{equation}\label{eq:H+G=0}
	0=\left<E_{\alpha, \beta}'(u), u_i \right> = H_\alpha(u_i) + G_\beta(u_i)  
	\quad\text{ for }~  i=1,2,3. 
	\end{equation}
	Assume, without loss of generality, that $u^+ = u_1 + u_2$ and $u^- = -u_3$. Since $u \in \mathcal{M}_{\alpha, \beta}^1$, we have
	\begin{align*}
	H_{\alpha}(u^+)  
	= 
	H_{\alpha}(u_1) + 
	H_{\alpha}(u_2)
	< 0
	~\text{ and }~
	H_{\alpha}(u^-) = 
	H_{\alpha}(-u_3)= H_{\alpha}(u_3)< 0.
	\end{align*} 
	Moreover, we may assume that $H_\alpha(u_2) \leq H_\alpha(u_1)$, whence $H_\alpha(u_2)<0$. This assumption splits into the following four cases:
	
	(i) $H_{\alpha}(u_2) \leq H_{\alpha}(u_1) < 0$; 
	
	(ii) $H_\alpha(u_2)<H_\alpha(u_1)=0$; 
	
	(iii) $H_\alpha(u_2)<0<H_\alpha(u_1)$ and $H_\alpha(u_1)+H_\alpha(u_3) \geq 0$; 
	
	(iv) $H_\alpha(u_2)<0<H_\alpha(u_1)$ and $H_\alpha(u_1)+H_\alpha(u_3) < 0$. 

	\noindent
	Now we will subsequently show a contradiction for each case.

	(i) It is easy to see  that $u_1 + u_3 \in \mathcal{M}_{\alpha, \beta}^1$. 
	Since $H_{\alpha}(u_2) < 0$ leads to $E_{\alpha, \beta}(u_2) > 0$, we have a contradiction by the following inequality: 
	$$
	\inf_{\mathcal{M}_{\alpha, \beta}^1} E_{\alpha, \beta} \leq
	E_{\alpha, \beta}(u_1 + u_3) < 
	E_{\alpha, \beta}(u_1 + u_3) +E_{\alpha, \beta}(u_2)= 
	E_{\alpha, \beta}(u_1 + u_2 + u_3) = \inf_{\mathcal{M}_{\alpha, \beta}^1} E_{\alpha, \beta}. 
	$$
	
	(ii) Since $H_\alpha(u_1) = 0$, we can derive from \eqref{eq:H+G=0} that  $G_\beta(u_1) = 0$. Recalling that $H_\alpha(u_2) < 0$, we get $u_1-u_2 \in \mathcal{K}_{\alpha,\beta}$, which contradicts the assertion (iv) of Lemma~\ref{lem:bK}.
	
	(iii) Recall $H_\alpha(u_3)<0$ and set 
	$$
	1 \le t_0^p:=-\frac{H_\alpha(u_1)}{H_\alpha(u_3)}=-\dfrac{G_\beta(u_1)}{G_\beta(u_3)}. 
	$$
	Since $u_1,u_3\in\mathcal{N}_{\alpha,\beta}$, we obtain 
	$$
	H_\alpha(u_1 - t_0 u_3)  = 
	H_\alpha(u_1) + t_0^p H_\alpha(u_3) =G_\beta(u_1) + t_0^p G_\beta(u_3) = 0.
	$$
	On the other hand, since $G_\beta(u_3) > 0$, $t_0 \ge 1$, and $p>q$, we have
	$$
	0 = G_\beta(u_1) + t_0^p G_\beta(u_3) \ge 
	G_\beta(u_1) + t_0^q G_\beta(u_3) = 
	G_\beta(u_1 - t_0 u_3).
	$$
	Consequently, $H_\alpha(u_1 - t_0 u_3) = 0$ and $G_\beta(u_1 - t_0 u_3) \le 0$. Considering a function $w = u_1 - t_0 u_3 - u_2$, we get $w^+ =u_1 - t_0 u_3$ and $w^- = u_2$, which implies that $w \in \mathcal{K}_{\alpha,\beta}$. This is again a contradiction to the emptiness of $\mathcal{K}_{\alpha,\beta}$.
	
	(iv) Consider a function $w = u_1 - u_3 - u_2$. Then
	$w^+ = u_1 - u_3$ and $w^- = u_2$. By the assumptions, we have $H_\alpha(w^\pm) < 0$.
	Therefore, $w \in \mathcal{M}_{\alpha, \beta}^1$ and 
	$$
	E_{\alpha, \beta}(w) = 
	E_{\alpha, \beta}(u_1 - u_3 - u_2) = E_{\alpha, \beta}(u_1) + 
	E_{\alpha, \beta}(u_3) + 
	E_{\alpha, \beta}(u_2) = 
	E_{\alpha, \beta}(u) = \inf_{\mathcal{M}_{\alpha, \beta}^1}E_{\alpha, \beta},
	$$
	that is, $w$ is also a minimizer of $E_{\alpha, \beta}$ over ${\mathcal{M}_{\alpha, \beta}^1}$ and hence a weak solution of $(GEV;\alpha,\beta)$ in view of  Lemma~\ref{lem:solution}. 
	This implies that for any $\xi \in \W$ there holds
	\begin{equation}
	\begin{aligned}
	\label{eq:weak1}
	\int_{\Omega} |\nabla w|^{p-2} \nabla (u_1 - u_3 - u_2) \nabla \xi \, dx
	+\int_{\Omega} |\nabla w|^{q-2} \nabla (u_1 - u_3 - u_2) \nabla \xi \, dx 
	\\
	=\alpha \int_{\Omega} |w|^{p-2} (u_1 - u_3 - u_2) \xi \, dx 
	+\beta \int_{\Omega} |w|^{q-2} (u_1 - u_3 - u_2) \xi \, dx.
	\end{aligned}
	\end{equation}
	On the other hand, since $u=u_1+u_2+u_3$ is also a weak solution of $(GEV;\alpha,\beta)$, we obtain 
	\begin{equation}
	\begin{aligned}
	\label{eq:weak2}
	\int_{\Omega} |\nabla u|^{p-2} \nabla (u_1 + u_3 + u_2) \nabla \xi \, dx
	+\int_{\Omega} |\nabla u|^{q-2} \nabla (u_1 + u_3 + u_2) \nabla \xi \, dx
	\\
	=\alpha \int_{\Omega} |u|^{p-2} (u_1 + u_3 + u_2) \xi \, dx 
	+\beta \int_{\Omega} |u|^{q-2} (u_1 + u_3 + u_2) \xi \, dx
	\end{aligned}
	\end{equation}
	for all $\xi \in \W$. 
	Summarizing \eqref{eq:weak1} and \eqref{eq:weak2} and noting that $|u| \equiv |w|$ and $|\nabla u| \equiv |\nabla w|$, we get
	\begin{align*}
	\int_{\Omega} |\nabla u_1|^{p-2} \nabla u_1 \nabla \xi \, dx
	+\int_{\Omega} |\nabla u_1|^{q-2} \nabla u_1  \nabla \xi \, dx
	=\alpha \int_{\Omega} |u_1|^{p-2} u_1 \xi \, dx 
	+\beta \int_{\Omega} |u_1|^{q-2} u_1 \xi \, dx
	\end{align*}
	for each $\xi \in \W$, since the supports of $u_i$ are mutually disjoint.
	This means that $u_1$ is a nonnegative solution of $(GEV;\alpha,\beta)$ \textit{in} $\Omega$. 
	However, the strong maximum principle implies that $u_1 > 0$ in $\Omega$, cf. \cite[Remark~1, p.~3284]{BobkovTanaka2015}. Hence, $u_2\equiv 0$ and $u_3\equiv 0$, which is a contradiction. 
\end{proof}

\section{Nodal solutions with negative energy}\label{sec:minimax} 
In this section, we provide the main ingredients for the proofs of Theorems~\ref{thm:neg1} and \ref{thm:neg2}.

\subsection{Auxiliary results}
Consider the set
\begin{equation*}
Y(\lambda):=\{u\in\W:~ \|\nabla u\|_p^p \geq \lambda\|u\|_p^p \,\},
\end{equation*}
where $\lambda \geq 0$. 
Hereinafter, by $S^k_+$ we denote the closed unit upper hemisphere in $\mathbb{R}^{k+1}$ with the boundary $S^{k-1}$. 
We begin with the following linking lemma. 
\begin{lemma}\label{lem:link} 
Let	$k \in \mathbb{N}$.
Then $h(S^k_+)\cap Y(\lambda_{k+1}(p)) \neq \emptyset$ for any $h\in C(S^k_+,\W)$  provided $h\big|_{S^{k-1}}$ is odd.
\end{lemma}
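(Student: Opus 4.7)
The plan is to argue by contradiction. Suppose that $h(S^k_+)\cap Y(\lambda_{k+1}(p))=\emptyset$. Since $0\in Y(\lambda_{k+1}(p))$ trivially (the defining inequality becomes $0\ge 0$), this forces $h(z)\not\equiv 0$ on $S^k_+$ and moreover
\begin{equation*}
\|\nabla h(z)\|_p^p<\lambda_{k+1}(p)\|h(z)\|_p^p \qquad \text{for all } z\in S^k_+.
\end{equation*}
Hence the normalized map $\tilde h(z):=h(z)/\|h(z)\|_p$ is a well-defined continuous map from $S^k_+$ into the unit sphere $S(p)$.

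Next I would promote $\tilde h$ to a globally odd continuous map $\hat h:S^k\to S(p)$ by setting
\begin{equation*}
\hat h(z):=\tilde h(z) \text{ for } z\in S^k_+, \qquad \hat h(z):=-\tilde h(-z) \text{ for } z\in S^k_-.
\end{equation*}
The two definitions must be checked to agree on the equator $S^{k-1}=S^k_+\cap S^k_-$: since $h|_{S^{k-1}}$ is odd (and $\|\cdot\|_p$ is even), for $z\in S^{k-1}$ we have $\tilde h(-z)=-\tilde h(z)$, so $-\tilde h(-z)=\tilde h(z)$, which gives continuity across the seam. The resulting $\hat h$ is continuous and odd by construction, and takes values in $S(p)$, hence $\hat h\in\mathscr{F}_{k+1}(p)$.

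Finally, the minimax characterization \eqref{lambda_n} of $\lambda_{k+1}(p)$ would yield
\begin{equation*}
\lambda_{k+1}(p)\le \max_{z\in S^k}\|\nabla\hat h(z)\|_p^p.
\end{equation*}
On the other hand, for $z\in S^k_+$ one has $\|\nabla\hat h(z)\|_p^p=\|\nabla h(z)\|_p^p/\|h(z)\|_p^p<\lambda_{k+1}(p)$ by the standing assumption, and similarly on $S^k_-$ after replacing $z$ by $-z$. Since the function $z\mapsto\|\nabla\hat h(z)\|_p^p$ is continuous and $S^k$ is compact, the strict pointwise inequality is preserved at the maximum, giving $\max_{z\in S^k}\|\nabla\hat h(z)\|_p^p<\lambda_{k+1}(p)$, a contradiction.

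The only delicate step is the construction of the odd extension $\hat h$ and its continuity at the equator; the rest is a direct comparison with the definition \eqref{lambda_n}, together with the compactness argument that upgrades pointwise strict inequality to strict inequality of the supremum.
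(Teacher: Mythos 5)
Your proposal is correct and is essentially the paper's argument presented in contrapositive form: both proofs handle the degenerate case $\|h(z)\|_p=0$ separately, build the same odd normalized extension $\hat h\in\mathscr{F}_{k+1}(p)$ of $h$ to all of $S^k$, and then invoke the minimax characterization \eqref{lambda_n} of $\lambda_{k+1}(p)$ (the paper directly produces a point $z_0$ with Rayleigh quotient at least $\lambda_{k+1}(p)$ and uses oddness to place it in $S^k_+$, while you derive a contradiction from the everywhere-strict inequality via compactness). No substantive difference.
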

\begin{proof} 
Fix any $h\in C(S^k_+,\W)$ such that 
$h\big|_{S^{k-1}}$ is odd. 
If $\|u\|_p=0$ for some $u\in h(S^{k}_+)$, 
then, obviously, $u\in Y(\lambda_{k+1}(p))$. 
Thus, we may assume that $\|u\|_p > 0$ for 
every $u\in h(S^{k}_+)$. 
Define the map
$$
\tilde{h}(z):=
\left\{
\begin{aligned}
&h(z)/\|h(z)\|_p
&& \text{ if } z\in S^k_+, \\
-&h(-z)/\|h(-z)\|_p
&& \text{ if } z\in S^k_-. 
\end{aligned}
\right. 
$$
It is not hard to see that $\tilde{h} \in \mathscr{F}_{k+1}(p)$, where $\mathscr{F}_{k+1}(p)$ is the set given by \eqref{F_n} with $r=p$. 
By the definition \eqref{lambda_n} of $\lambda_{k+1}(p)$, 
there exists 
$z_0\in S^k$ such that 
$\|\nabla \tilde{h}(z_0)\|_p^p \geq \lambda_{k+1}(p)$. Since $\tilde{h}(z_0) \in S(p)$, we have $\|\nabla \tilde{h}(z_0)\|_p^p \geq \lambda_{k+1}(p)\|\tilde{h}(z_0)\|_p^p$. 
Moreover, since $\tilde{h}$ is odd, 
we may suppose that $z_0\in S^k_+$. Consequently, we obtain $h(z_0)\in Y(\lambda_{k+1}(p))$. 
\end{proof} 

\begin{lemma}\label{lem:bdd-below} 
	Let $\alpha, \beta \in \mathbb{R}$ and let $\lambda>\max\{\alpha,0\}$. 
	Then $\E$ is bounded from below on $Y(\lambda)$. 
\end{lemma}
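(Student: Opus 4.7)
The plan is to separate the two pieces of $E_{\alpha,\beta}$, exploit the defining inequality of $Y(\lambda)$ to make $H_\alpha$ nonnegative (with a controlled coefficient) on $Y(\lambda)$, and then control $G_\beta$ by the Sobolev embedding $W_0^{1,p}(\Omega)\hookrightarrow W_0^{1,q}(\Omega)\hookrightarrow L^q(\Omega)$, using the fact that $q<p$ so that the $p$-term dominates at infinity.

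More precisely, first I would fix $u\in Y(\lambda)$ and observe that $\|u\|_p^p\le \lambda^{-1}\|\nabla u\|_p^p$. If $\alpha\le 0$, then trivially $H_\alpha(u)\ge \|\nabla u\|_p^p$; if $0<\alpha<\lambda$, then
$$
H_\alpha(u)\;=\;\|\nabla u\|_p^p-\alpha\|u\|_p^p\;\ge\;\Bigl(1-\tfrac{\alpha}{\lambda}\Bigr)\|\nabla u\|_p^p.
$$
In either case there is a constant $c_1:=\min\{1,1-\alpha/\lambda\}>0$ with $H_\alpha(u)\ge c_1\|\nabla u\|_p^p$ for every $u\in Y(\lambda)$.

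Next, since $\Omega$ is bounded and $q<p$, the continuous embeddings $W_0^{1,p}\hookrightarrow W_0^{1,q}\hookrightarrow L^q(\Omega)$ yield a constant $C>0$ such that $\|\nabla u\|_q\le C\|\nabla u\|_p$ and $\|u\|_q\le C\|\nabla u\|_p$ for all $u\in W_0^{1,p}$. Consequently,
$$
|G_\beta(u)|\;\le\;\|\nabla u\|_q^q+|\beta|\,\|u\|_q^q\;\le\;c_2\|\nabla u\|_p^q
$$
for some $c_2=c_2(\beta,q,\Omega)>0$. Combining these two bounds gives
$$
E_{\alpha,\beta}(u)\;\ge\;\frac{c_1}{p}\|\nabla u\|_p^p-\frac{c_2}{q}\|\nabla u\|_p^q\;=\;f\bigl(\|\nabla u\|_p\bigr),
$$
where $f(t):=\tfrac{c_1}{p}t^p-\tfrac{c_2}{q}t^q$ for $t\ge 0$.

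Finally, since $p>q>0$ and $c_1>0$, the single-variable function $f$ satisfies $f(t)\to+\infty$ as $t\to+\infty$ and is continuous on $[0,+\infty)$, so it attains a finite minimum; hence $E_{\alpha,\beta}$ is bounded from below on $Y(\lambda)$. There is no real obstacle here; the only mild subtlety is making the constant $c_1$ work uniformly, which is handled by splitting into the cases $\alpha\le 0$ and $0<\alpha<\lambda$.
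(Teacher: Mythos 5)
Your proof is correct and follows essentially the same route as the paper: use the constraint defining $Y(\lambda)$ to bound $H_\alpha(u)$ from below by a positive multiple of $\|\nabla u\|_p^p$, control the remaining $q$-homogeneous terms by $C\|\nabla u\|_p^q$ via H\"older and Poincar\'e, and conclude from $q<p$. Your explicit case split on the sign of $\alpha$ (and your bounding of all of $|G_\beta(u)|$ rather than just the $-\beta\|u\|_q^q$ term) is if anything slightly more careful than the paper's one-line estimate.
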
 
\begin{proof} 
Assume that $u\in Y(\lambda)$ with $\lambda>\max\{\alpha,0\}$. 
Using the H\"older inequality, we obtain
\begin{align*} 
\E(u) &\ge \frac{\lambda-\alpha}{p\lambda}\|\nabla u\|_p^p 
-\frac{\beta}{q} |\Omega|^{\frac{p-q}{p}}  \|u\|_p^q
\geq \frac{\lambda-\alpha}{p\lambda}\|\nabla u\|_p^p 
-\frac{\beta}{q (\lambda_1(p))^{q/p}} |\Omega|^{\frac{p-q}{p}}  
\|\nabla u\|_p^q,
\end{align*} 
which implies the desired conclusion, since $q<p$. 
\end{proof}

\begin{lemma}\label{lem:bdd-PS}
Assume $\alpha, \beta \in \mathbb{R}$ and let $\{u_n\}_{n \in \mathbb{N}}$ be a sequence in $\W$ which satisfies 
$\|\nabla u_n\|_p \to +\infty$ and 
$\E'(u_n)/\|\nabla u_n\|_p^{p-1} \to 0$ in $(\W)^*$ as $n \to +\infty$. 
Then $v_n:=u_n/\|\nabla u_n\|_p$ has a subsequence strongly convergent in $\W$ to some $v_0\in ES(p;\alpha)\setminus\{0\}$, that is, $\alpha\in\sigma(-\Delta_p)$. 
\end{lemma}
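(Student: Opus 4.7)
The plan is to extract a weak limit from the normalized sequence, upgrade it to strong convergence using the $(S_+)$ property of $-\Delta_p$, and then pass to the limit in the rescaled equation. Set $t_n := \|\nabla u_n\|_p \to +\infty$. Since $\|\nabla v_n\|_p = 1$, by reflexivity and the Rellich--Kondrachov theorem we may assume, along a subsequence, that $v_n \rightharpoonup v_0$ in $\W$ and $v_n \to v_0$ strongly in $L^p(\Omega)$ and $L^q(\Omega)$. Dividing the identity for $\E'(u_n)$ by $t_n^{p-1}$ and using the $(p-1)$- and $(q-1)$-homogeneity of the operators gives, for every $\varphi\in\W$,
\begin{align*}
\frac{\langle \E'(u_n), \varphi\rangle}{t_n^{p-1}}
&= \intO |\nabla v_n|^{p-2}\nabla v_n\nabla\varphi\,dx - \alpha \intO |v_n|^{p-2}v_n\,\varphi\,dx \\
&\quad + t_n^{q-p}\left[\,\intO |\nabla v_n|^{q-2}\nabla v_n\nabla\varphi\,dx - \beta \intO |v_n|^{q-2}v_n\,\varphi\,dx\,\right].
\end{align*}
Since $q<p$, the prefactor $t_n^{q-p}\to 0$, while the bracketed term is controlled by $C\|\nabla\varphi\|_p$ uniformly in $n$ via H\"older's inequality and the continuous embedding $\W\hookrightarrow W_0^{1,q}(\Omega)$.

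Testing the above identity with $\varphi = v_n - v_0$, which is bounded in $\W$, the left-hand side tends to $0$ by hypothesis, the $L^p$-term tends to $0$ by the strong $L^p$-convergence, and the bracketed quantity is bounded while multiplied by $t_n^{q-p}\to 0$. Hence
\begin{equation*}
\intO |\nabla v_n|^{p-2}\nabla v_n\nabla(v_n-v_0)\,dx \longrightarrow 0.
\end{equation*}
Combined with the weak convergence $v_n\rightharpoonup v_0$, the well-known $(S_+)$ property of $-\Delta_p$ on $\W$ then upgrades this to strong convergence $v_n\to v_0$ in $\W$. In particular, $\|\nabla v_0\|_p = \lim_{n\to\infty}\|\nabla v_n\|_p = 1$, so $v_0\not\equiv 0$.

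Finally, for an arbitrary $\varphi\in\W$ we pass to the limit in the displayed identity: the strong convergence $v_n\to v_0$ in $\W$ handles the nonlinear $p$-Laplacian term (via continuity of $-\Delta_p:\W\to(\W)^*$), the strong $L^p$-convergence handles the $\alpha$-term, and the remaining bracketed expression vanishes due to the factor $t_n^{q-p}$. This yields
\begin{equation*}
\intO |\nabla v_0|^{p-2}\nabla v_0\nabla\varphi\,dx = \alpha\intO |v_0|^{p-2}v_0\,\varphi\,dx \quad \text{for all } \varphi\in\W,
\end{equation*}
i.e.\ $v_0$ is a nontrivial weak solution of $(EV;p,\alpha)$, so that $v_0\in ES(p;\alpha)\setminus\{0\}$ and $\alpha\in\sigma(-\Delta_p)$. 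The only genuinely delicate step is the $(S_+)$ argument upgrading weak to strong convergence; everything else is essentially a scaling observation forced by the strict inequality $q<p$, which ensures the $q$-Laplacian contribution is asymptotically negligible after the $t_n^{p-1}$-rescaling.
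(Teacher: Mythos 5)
Your proposal is correct and follows essentially the same route as the paper: extract a weak limit of the normalized sequence, observe that the rescaled $q$-Laplacian terms vanish because of the factor $\|\nabla u_n\|_p^{q-p}\to 0$, verify the $(S_+)$-condition for the $p$-Laplacian part to upgrade to strong convergence, and pass to the limit in the rescaled weak formulation. The only cosmetic difference is that the paper proves the $(S_+)$-type step inline (via the H\"older inequality estimate $\intO(|\nabla v_n|^{p-2}\nabla v_n-|\nabla v_0|^{p-2}\nabla v_0)(\nabla v_n-\nabla v_0)\,dx\ge(\|\nabla v_n\|_p^{p-1}-\|\nabla v_0\|_p^{p-1})(\|\nabla v_n\|_p-\|\nabla v_0\|_p)$ and uniform convexity), whereas you invoke the $(S_+)$ property of $-\Delta_p$ as a known fact.
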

\begin{proof} 
Since $\|\nabla v_n\|_p = 1$ for any $n \in \mathbb{N}$, passing to an appropriate subsequence, 
we may assume that $v_n$ converges to some $v_0$ weakly in $\W$ and 
strongly in $L^p(\Omega)$. 
In particular, $\left<H'_\alpha(v_0), v_n \right> \to \left<H'_\alpha(v_0), v_0 \right>$ as $n \to +\infty$. 
Moreover, 
$$
\frac{\left|\left<E_{\alpha, \beta}'(u_n), v_n - v_0 \right>\right|}{\|\nabla u_n\|_p^{p-1}} 
\leq
\frac{\|\E'(u_n)\|_{(\W)^*}}{\|\nabla u_n\|_p^{p-1}} 
\|\nabla(v_n-v_0)\|_p 
\leq
2 \frac{\|\E'(u_n)\|_{(\W)^*}}{\|\nabla u_n\|_p^{p-1}} 
\to 0
$$
as $n \to +\infty$, by the assumption.
Using these facts, we get 
\begin{align*} 
o(1)&=\left< \frac{\E'(u_n)}{\|\nabla u_n\|_p^{p-1}}-H'_\alpha(v_0), 
v_n-v_0\right>
\\
&=\intO \left(|\nabla v_n|^{p-2}\nabla v_n-|\nabla v_0|^{p-2}\nabla v_0
\right)
(\nabla v_n-\nabla v_0)\,dx 
-\alpha\intO \left(|v_n|^{p-2}v_n-|v_0|^{p-2}v_0\right)
(v_n-v_0)\,dx 
\\
&\quad +\frac{1}{\|\nabla u_n\|_p^{p-q}}\intO |\nabla v_n|^{q-2}\nabla v_n
(\nabla v_n-\nabla v_0)\,dx 
-\frac{\beta}{\|\nabla u_n\|_p^{p-q}}\intO |v_n|^{q-2}v_n(v_n-v_0)\,dx
\\
&=\intO \left(|\nabla v_n|^{p-2}\nabla v_n-|\nabla v_0|^{p-2}\nabla v_0\right)
(\nabla v_n-\nabla v_0)\,dx +o(1)
\\
&\ge \left(\|\nabla v_n\|_p^{p-1}-\|\nabla v_0\|_p^{p-1}\right) 
\left(\|\nabla v_n\|_p-\|\nabla v_0\|_p\right)+o(1), 
\end{align*}
where the last inequality is obtained by H\"older's inequality. 
Hence, $\|\nabla v_n\|_p \to \|\nabla v_0\|_p = 1$ as $n \to +\infty$, 
and the uniform convexity of $\W$ implies that $v_n$ converges to $v_0$ strongly in $\W$. 

On the other hand, for any $\xi \in\W$ the following equality holds:
\begin{align*} 
\left< \frac{\E'(u_n)}{\|\nabla u_n\|_p^{p-1}}, \xi \right>
&=\intO |\nabla v_n|^{p-2}\nabla v_n\nabla \xi\,dx 
-\alpha\intO |v_n|^{p-2}v_n \xi\,dx 
\\
&\quad +\frac{1}{\|\nabla u_n\|_p^{p-q}}\intO |\nabla v_n|^{q-2}\nabla v_n
\nabla \xi\,dx 
-\frac{\beta}{\|\nabla u_n\|_p^{p-q}}\intO |v_n|^{q-2}v_n \xi\,dx.
\end{align*}
Therefore, passing to the limit as $n \to +\infty$, we derive
$$
\intO |\nabla v_0|^{p-2}\nabla v_0\nabla \xi\,dx 
-\alpha\intO |v_0|^{p-2}v_0 \xi\,dx=0 
$$
for all $\xi \in \W$, that is, 
$v_0 \in  ES(p;\alpha)\setminus\{0\}$. 
\end{proof}

\begin{lemma}\label{PS} 
If $\alpha\not\in\sigma(-\Delta_p)$, then $\E$ satisfies 
the Palais--Smale condition. 
\end{lemma}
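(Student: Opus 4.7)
The plan is to verify the Palais--Smale condition in the standard two-step fashion: first establish boundedness of an arbitrary Palais--Smale sequence, then upgrade weak convergence to strong convergence by exploiting the $(S_+)$-structure of the $p$-Laplacian combined with compactness of the lower-order terms.

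First I would fix an arbitrary Palais--Smale sequence $\{u_n\}_{n\in\mathbb{N}}\subset\W$, i.e., $\{\E(u_n)\}_{n\in\mathbb{N}}$ is bounded and $\E'(u_n)\to 0$ in $(\W)^*$. To show $\{u_n\}$ is bounded, I argue by contradiction: if (up to a subsequence) $\|\nabla u_n\|_p\to+\infty$, then
$$
\frac{\|\E'(u_n)\|_{(\W)^*}}{\|\nabla u_n\|_p^{p-1}}\to 0,
$$
so the hypotheses of Lemma~\ref{lem:bdd-PS} are met. That lemma then produces some $v_0\in ES(p;\alpha)\setminus\{0\}$, forcing $\alpha\in\sigma(-\Delta_p)$, which contradicts the assumption. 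Hence $\{u_n\}$ is bounded in $\W$, and along a subsequence we may pass to $u_n\rightharpoonup u_0$ weakly in $\W$ and $u_n\to u_0$ strongly in $L^p(\Omega)$ and $L^q(\Omega)$.

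Next, to obtain strong convergence in $\W$, I would test $\E'(u_n)$ against $u_n-u_0$:
$$
\langle \E'(u_n),u_n-u_0\rangle \to 0.
$$
The strong $L^p$- and $L^q$-convergence of $\{u_n\}$ together with boundedness immediately yield that the lower-order contributions
$$
\alpha\intO |u_n|^{p-2}u_n(u_n-u_0)\,dx + \beta\intO |u_n|^{q-2}u_n(u_n-u_0)\,dx \to 0,
$$
and similarly the $q$-Laplacian piece $\intO |\nabla u_n|^{q-2}\nabla u_n\cdot\nabla(u_n-u_0)\,dx$ can be shown to vanish in the limit, using boundedness of $\{u_n\}$ in $\W$ together with weak convergence (this term is of subcritical order relative to the $p$-part).

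What remains, as in the final display of the proof of Lemma~\ref{lem:bdd-PS}, is
$$
\intO\bigl(|\nabla u_n|^{p-2}\nabla u_n-|\nabla u_0|^{p-2}\nabla u_0\bigr)\cdot(\nabla u_n-\nabla u_0)\,dx \to 0,
$$
and by H\"older's inequality this bounds $(\|\nabla u_n\|_p^{p-1}-\|\nabla u_0\|_p^{p-1})(\|\nabla u_n\|_p-\|\nabla u_0\|_p)$ from above, yielding $\|\nabla u_n\|_p\to\|\nabla u_0\|_p$. Combined with weak convergence and uniform convexity of $\W$, this gives $u_n\to u_0$ strongly in $\W$, as desired. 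The main (minor) obstacle is the careful handling of the $q$-Laplacian cross term, but since $q<p$ and $\{u_n\}$ is already bounded in $\W$, this piece is controlled either by passing it to the other side together with the strongly convergent $L^q$-term, or by noting that $-\Delta_q$ maps weakly convergent sequences in $W_0^{1,p}$ into strongly convergent sequences in $(W_0^{1,p})^*$ via compact Sobolev embedding.
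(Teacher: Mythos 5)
Your first step --- establishing boundedness of a Palais--Smale sequence by contradiction, via the normalized blow-up argument of Lemma~\ref{lem:bdd-PS}, which would force $\alpha\in\sigma(-\Delta_p)$ --- is exactly the paper's argument. For the second step the paper simply invokes the $(S_+)$-property of $-\Delta_p-\Delta_q$ (Remark~\ref{remark:S_+}); you instead re-derive it inline, and this is where your proposal has a genuine gap. You claim that the cross term
$$
\intO |\nabla u_n|^{q-2}\nabla u_n\cdot\nabla(u_n-u_0)\,dx
$$
``can be shown to vanish in the limit, using boundedness of $\{u_n\}$ in $\W$ together with weak convergence,'' and alternatively that $-\Delta_q$ maps weakly convergent sequences in $\W$ to strongly convergent ones in $(\W)^*$ ``via compact Sobolev embedding.'' Neither justification works. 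The sequence $|\nabla u_n|^{q-2}\nabla u_n$ is merely bounded in $L^{q/(q-1)}(\Omega)$, and $\nabla(u_n-u_0)\rightharpoonup 0$ only weakly in $L^q(\Omega)$; the product of a weakly null sequence with a bounded (not strongly convergent) sequence need not integrate to zero. The compact Sobolev embedding gives compactness of $u\mapsto u$ into $L^q(\Omega)$, not of $u\mapsto \nabla u$, so it controls the zeroth-order terms $|u_n|^{q-2}u_n$ but says nothing about the gradient term; $-\Delta_q$ is not compact from $\W$ into $(\W)^*$. In effect, the vanishing of this term is equivalent to what you are trying to prove and cannot be assumed a priori.

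The repair is the one the paper uses in Remark~\ref{remark:S_+}: do not try to kill the $q$-term, but subtract the fixed quantities $\intO |\nabla u_0|^{p-2}\nabla u_0\cdot\nabla(u_n-u_0)\,dx$ and $\intO |\nabla u_0|^{q-2}\nabla u_0\cdot\nabla(u_n-u_0)\,dx$ (both of which do tend to zero by weak convergence against fixed dual elements) and observe that the two resulting monotonicity expressions
$$
\intO \bigl(|\nabla u_n|^{r-2}\nabla u_n-|\nabla u_0|^{r-2}\nabla u_0\bigr)\cdot\nabla(u_n-u_0)\,dx, \qquad r\in\{p,q\},
$$
are each bounded below by $\bigl(\|\nabla u_n\|_r^{r-1}-\|\nabla u_0\|_r^{r-1}\bigr)\bigl(\|\nabla u_n\|_r-\|\nabla u_0\|_r\bigr)\ge 0$ via H\"older. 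Since their sum tends to zero and each is nonnegative, each tends to zero, giving $\|\nabla u_n\|_p\to\|\nabla u_0\|_p$ and $\|\nabla u_n\|_q\to\|\nabla u_0\|_q$, and uniform convexity concludes. Your parenthetical remark about ``passing it to the other side together with the strongly convergent $L^q$-term'' gestures in this direction, but as written the argument rests on an unsupported compactness claim.
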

\begin{proof} 
Let $\{u_n\}_{n \in \mathbb{N}} \subset \W$ be a Palais--Smale sequence for $\E$, that is, 
$$
\E(u_n) \to c 
\quad
\text{and}
\quad 
\|\E'(u_n)\|_{(\W)^*} \to 0 
$$
as $n \to +\infty$, where $c$ is a constant. 
Due to the $(S_+)$-property for the operator $-\Delta_p - \Delta_q$ 
(see Remark~\ref{remark:S_+} below), 
it is sufficient to show that 
$\{u_n\}_{n \in \mathbb{N}}$ is bounded in $\W$. 
If we suppose, by contradiction, that $\|\nabla u_n\|_p \to +\infty$ as $n \to +\infty$, then Lemma~\ref{lem:bdd-PS} implies that 
$\alpha\in\sigma(-\Delta_p)$, which contradicts the assumption of the lemma.
\end{proof} 

\begin{remark}\label{remark:S_+} 
For the reader's convenience we show that 
the operator $-\Delta_p - \Delta_q$ has the $(S_+)$-property, namely, 
any sequence $\{u_n\}_{n\in\mathbb{N}}\subset W_0^{1,p}$ converging to some $u_0$ 
weakly in $\W$ and satisfying  
\begin{equation}\label{eq:S_+}
\limsup_{n \to +\infty}\left< -\Delta_p u_n-\Delta_q u_n, u_n-u_0\right> \leq 0, 
\end{equation}
converges strongly in $\W$. Let $u_n \rightharpoonup u_0$ in $\W$ as $n \to +\infty$, and let \eqref{eq:S_+} holds. 
Then the H\"older inequality yields 
\begin{align*}
\left<-\Delta_p u_n-\Delta_q u_n, u_n-u_0\right>+o(1)
=&\left<-\Delta_p u_n-\Delta_q u_n, u_n-u_0\right>
-\left<-\Delta_p u_0-\Delta_q u_0, u_n-u_0\right> = 
\\
\intO \left(|\nabla u_n|^{p-2}\nabla u_n-|\nabla u_0|^{p-2}\nabla u_0\right)
&(\nabla u_n-\nabla u_0)\,dx +
\\
&\intO \left(|\nabla u_n|^{q-2}\nabla u_n-|\nabla u_0|^{q-2}\nabla u_0\right)
(\nabla u_n-\nabla u_0)\,dx
\geq 
\\
\left(\|\nabla u_n\|_p^{p-1}-\|\nabla u_0\|_p^{p-1}\right)
(\|\nabla u_n\|_p&-\|\nabla u_0\|_p)
+\left(\|\nabla u_n\|_q^{q-1}-\|\nabla u_0\|_q^{q-1}\right)
(\|\nabla u_n\|_q-\|\nabla u_0\|_q)
\ge 0, 
\end{align*}
which implies that $\|\nabla u_n\|_p\to \|\nabla u_0\|_p$  
and $\|\nabla u_n\|_q\to \|\nabla u_0\|_q$ as $n \to +\infty$. 
Due to the uniform convexity of $\W$, we conclude that $u_n$ converges to $u_0$ strongly in $\W$. 
\end{remark} 

Recall the definition \eqref{b_*2}:
\begin{equation}\label{b_*21}
\beta^{*}_\mathcal{U}(\alpha) 
:= 
\sup \left\{\,
\frac{\intO |\nabla \varphi|^q \, dx}{\intO |\varphi|^q \, dx}:~ 
\varphi \in ES(p; \alpha)\setminus \{0\} \right\}.
\end{equation}

\begin{lemma}\label{lem:bU:bound1}
If $\alpha\in\sigma(-\Delta_p)$, then 
$\lambda_1(q) \leq \beta^{*}_\mathcal{U}(\alpha) < +\infty$. 
\end{lemma}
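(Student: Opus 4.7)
The plan splits naturally into the two inequalities $\lambda_1(q)\le \beta^{*}_{\mathcal U}(\alpha)$ and $\beta^{*}_{\mathcal U}(\alpha)<+\infty$.

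The lower bound is immediate: since $\alpha\in\sigma(-\Delta_p)$, the set $ES(p;\alpha)\setminus\{0\}$ is nonempty, and every $\varphi$ in it belongs to $W_0^{1,p}\subset W_0^{1,q}$ (for a bounded domain $\Omega$, using $q<p$). The variational characterization \eqref{charact-1st-ev} of $\lambda_1(q)$ then yields $\|\nabla\varphi\|_q^q/\|\varphi\|_q^q\ge \lambda_1(q)$ for each such $\varphi$, and taking the supremum over $\varphi$ gives $\beta^{*}_{\mathcal U}(\alpha)\ge \lambda_1(q)$.

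For the upper bound I would argue by contradiction. Suppose $\beta^{*}_{\mathcal U}(\alpha)=+\infty$, and choose a sequence $\{\varphi_n\}\subset ES(p;\alpha)\setminus\{0\}$ such that $\|\nabla\varphi_n\|_q^q/\|\varphi_n\|_q^q\to +\infty$. Since the ratio is $q$-homogeneous, we may normalize by $\|\nabla\varphi_n\|_p=1$. Because each $\varphi_n$ solves $(EV;p,\alpha)$, testing the equation with $\varphi_n$ itself gives $\|\varphi_n\|_p^p=1/\alpha$ (recall $\alpha\ge \lambda_1(p)>0$, since all eigenvalues of $-\Delta_p$ are positive). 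H\"older's inequality on a bounded domain then provides a uniform bound $\|\nabla\varphi_n\|_q\le |\Omega|^{(p-q)/(pq)}$. Consequently, for the ratio to diverge we must have $\|\varphi_n\|_q\to 0$.

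Now the compact embedding $W_0^{1,p}\hookrightarrow L^p(\Omega)$ provides a subsequence, still labelled $\{\varphi_n\}$, converging weakly in $W_0^{1,p}$ and strongly in $L^p(\Omega)$ to some $\varphi_\infty$. From $\|\varphi_n\|_p^p=1/\alpha$ we get $\|\varphi_\infty\|_p^p=1/\alpha>0$, while the continuous embedding $L^p(\Omega)\hookrightarrow L^q(\Omega)$ combined with $\|\varphi_n\|_q\to 0$ forces $\|\varphi_\infty\|_q=0$, hence $\varphi_\infty\equiv 0$. This contradiction establishes $\beta^{*}_{\mathcal U}(\alpha)<+\infty$. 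The only slightly delicate step is the bookkeeping that ensures $\|\varphi_n\|_p$ stays away from $0$ after normalization; this is handled directly by the eigenvalue relation, so no extra regularity of the eigenfunctions or finer spectral information is required.
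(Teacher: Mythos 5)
Your proof is correct. The lower bound is handled exactly as in the paper: $ES(p;\alpha)\setminus\{0\}\neq\emptyset$ and the Rayleigh quotient characterization \eqref{charact-1st-ev} of $\lambda_1(q)$ give $\beta^{*}_{\mathcal U}(\alpha)\ge\lambda_1(q)$ immediately. For the upper bound the paper takes a shortcut you could not have seen: it invokes \cite[Lemma~9]{T-2014}, which asserts that $\|\nabla u\|_p \le C(\alpha)\|u\|_q$ for all $u$ in the sublevel set $X(\alpha)=\{v:\ \|\nabla v\|_p^p\le\alpha\|v\|_p^p\}$, and then concludes by one application of H\"older's inequality, using $ES(p;\alpha)\subset X(\alpha)$. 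Your contradiction argument --- normalize $\|\nabla\varphi_n\|_p=1$, use the eigenvalue relation to pin $\|\varphi_n\|_p^p=1/\alpha$, bound $\|\nabla\varphi_n\|_q$ by H\"older, deduce $\|\varphi_n\|_q\to0$, and then contradict this via the compact embedding into $L^p$ and the continuous embedding $L^p\hookrightarrow L^q$ --- is precisely a self-contained proof of the special case of that cited lemma restricted to the eigenspace (and in fact, replacing the equality $\|\varphi_n\|_p^p=1/\alpha$ by the inequality $\ge 1/\alpha$, the same argument proves the cited lemma on all of $X(\alpha)$). So the underlying mechanism is the same compactness phenomenon; what your version buys is independence from the external reference, at the cost of a slightly longer argument. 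The only cosmetic slip is calling the Rayleigh-type ratio ``$q$-homogeneous'' when you mean it is scale-invariant (both numerator and denominator are $q$-homogeneous); this does not affect the validity of the normalization.
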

\begin{proof}
Let $\alpha\in\sigma(-\Delta_p)$. Recall that 
\cite[Lemma~9]{T-2014} implies the existence of a constant $C(\alpha)>0$ such that $\|\nabla u\|_p \leq C(\alpha) \|u\|_q$ for any $u\in X(\alpha)$, 
where $X(\alpha)$ is defined by \eqref{def:X}. 
Thus, applying the H\"older inequality, we get
	$$
	\intO |\nabla u|^q \, dx \leq |\Omega|^\frac{p-q}{p} \left(\intO |\nabla u|^p \, dx\right)^{q/p} \leq 
	|\Omega|^\frac{p-q}{p} C(\alpha)^q 	\intO |u|^q \, dx
	$$
	for any $u\in X(\alpha)$. Therefore, $\beta^*_\mathcal{U}(\alpha) < +\infty$, since $ES(p; \alpha)\subset X(\alpha)$. 
	On the other hand, it is clear that  $\beta^*_\mathcal{U}(\alpha)\ge \lambda_1(q)$ provided $ES(p; \alpha)\setminus\{0\} \neq \emptyset$. 
\end{proof}

In the one-dimensional case we can clarify the bounds for $\beta^*_\mathcal{U}(\alpha)$ as follows.
\begin{lemma}\label{lem:bU:bound2}
	Let $N=1$ and  $\alpha=\lambda_{k}(p)$, $k \in \mathbb{N}$. Then
	\begin{equation}\label{betaU}
		\lambda_{k+1}(q)\left(\frac{k}{k+1}\right)^q
		= k^q \lambda_1(q)
		<\beta^*_\mathcal{U}(\alpha)=
		k^q
		\frac{\|\varphi_p'\|_q^q}{\|\varphi_p\|_q^q} < k^q \lambda_2(q)
		=\lambda_{k+1}(q)\left(\frac{2 k}{k+1}\right)^q.
	\end{equation}
\end{lemma}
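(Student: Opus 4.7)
My plan is to use the explicit one-dimensional structure of the eigenfunctions of $-\Delta_p$ together with the scaling law $\lambda_m(r) = m^r \lambda_1(r)$ valid on intervals. Without loss of generality I take $\Omega = (0,T)$. As explained already in the paper's discussion of $\varphi_{2,p}$ and in Appendix~A, the $k$-th variational eigenfunction $\varphi_{k,p}$ of $-\Delta_p$ is obtained by dividing $(0,T)$ into $k$ equal subintervals and placing on each a rescaled, alternately-signed copy of $\varphi_p$:
\[
\varphi_{k,p}(x) = (-1)^{j-1}\varphi_p\!\left(k\!\left(x-\tfrac{(j-1)T}{k}\right)\right), \qquad x \in \left(\tfrac{(j-1)T}{k},\tfrac{jT}{k}\right),\ j=1,\ldots,k.
\]
Since eigenvalues of $-\Delta_p$ are simple in one dimension (a standard ODE fact recorded in Appendix~A, together with the identification of the variational sequence with $\sigma(-\Delta_p)$ from \cite[Theorem~4.1]{drabman}), the eigenspace satisfies $ES(p;\lambda_k(p)) = \mathrm{span}(\varphi_{k,p})$. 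The supremum in \eqref{b_*21} therefore reduces to a single Rayleigh quotient.

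Next I compute $\|\varphi_{k,p}\|_q$ and $\|\varphi_{k,p}'\|_q$ by the change of variables $y = k(x-(j-1)T/k)$ on each subinterval. The chain rule picks up a factor $k$ in the derivative, while the Jacobian contributes a factor $1/k$ under integration. Summing over the $k$ subintervals yields
\[
\|\varphi_{k,p}\|_q^q = \|\varphi_p\|_q^q, \qquad \|\varphi_{k,p}'\|_q^q = k^q \|\varphi_p'\|_q^q,
\]
which gives the central equality
\[
\beta^*_\mathcal{U}(\lambda_k(p)) = \frac{\|\varphi_{k,p}'\|_q^q}{\|\varphi_{k,p}\|_q^q} = k^q\,\frac{\|\varphi_p'\|_q^q}{\|\varphi_p\|_q^q}.
\]

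For the strict inequalities I appeal directly to Lemma~\ref{lem:appendixA} in Appendix~A (already invoked in the preceding remark about $\beta_{\mathcal{L}}^*$), which states $\lambda_1(q) < \|\varphi_p'\|_q^q/\|\varphi_p\|_q^q < \lambda_2(q)$. Multiplying by $k^q$ yields $k^q \lambda_1(q) < \beta^*_\mathcal{U}(\lambda_k(p)) < k^q \lambda_2(q)$, which is the inner pair of inequalities in \eqref{betaU}. The outer equalities are now purely arithmetic consequences of the one-dimensional scaling identity $\lambda_m(r) = m^r \lambda_1(r)$ (obtained by the same rescaling argument applied to the $q$-Laplacian, or directly from the previous remark where the case $m=2$ was used): namely $\lambda_{k+1}(q)(k/(k+1))^q = (k+1)^q \lambda_1(q)\cdot (k/(k+1))^q = k^q \lambda_1(q)$ and $\lambda_{k+1}(q)(2k/(k+1))^q = (2k)^q \lambda_1(q) = k^q \cdot 2^q \lambda_1(q) = k^q \lambda_2(q)$.

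The only delicate point is the simplicity of $\lambda_k(p)$ and the identification of $\varphi_{k,p}$ as the eigenfunction associated with the $k$-th variational eigenvalue; both are classical one-dimensional ODE results whose verification is the purpose of Appendix~A. Once these are in hand, the remainder of the argument is a bookkeeping computation of change of variables plus the bound from Lemma~\ref{lem:appendixA}.
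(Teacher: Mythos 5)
Your argument is correct and follows essentially the same route as the paper: one-dimensionality of $ES(p;\lambda_k(p))$ reduces the supremum in \eqref{b_*2} to a single Rayleigh quotient, the $k$-equal-subinterval structure of $\varphi_{k,p}$ gives $\beta^*_\mathcal{U}(\lambda_k(p)) = k^q\,\|\varphi_p'\|_q^q/\|\varphi_p\|_q^q$ by scaling, and the strict inequalities plus the outer equalities come from Lemma~\ref{lem:appendixA} and the identity $\lambda_m(r) = m^r\lambda_1(r)$. The only difference is that you spell out the change-of-variables bookkeeping that the paper compresses into the phrase ``the standard scaling yields,'' and you cite \cite[Theorem~4.1]{drabman} where the paper cites \cite[Proposition~2.1]{drabman} for the simplicity of the eigenspace; neither affects the substance.
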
 
\begin{proof}
	Let $\Omega = (0, T)$, $T >0$, and  $\alpha = \lambda_k(p)$ for some $k \in \mathbb{N}$. 
	It is known that  $\lambda_k(r) =  (r-1)\left(\frac{k \pi_r}{T}\right)^p$ for any $r>1$ and $k \in \mathbb{N}$ (cf. Appendix~A), and hence the first and third equalities in \eqref{betaU} are satisfied.
	
	Note that the eigenspace $ES(p;\lambda_k(p))$ is one-dimensional, as it follows from \cite[Proposition~2.1]{drabman}. Denoting the corresponding eigenfunction as $\varphi_k$, we directly get $\beta^*_\mathcal{U}(\lambda_k(p))=
	\frac{\|\varphi_k'\|_q^q}{\|\varphi_k\|_q^q}$. On the other hand, $\varphi_k$ has exactly $k$ nodal domains of equivalent length (see Appendix~A), and hence the standard scaling yields $\beta^*_\mathcal{U}(\lambda_k(p))=
	k^q
	\frac{\|\varphi_p'\|_q^q}{\|\varphi_p\|_q^q}$, where $\varphi_p$ is the first eigenfunction of $-\Delta_p$. The inequalities in \eqref{betaU} follow from Lemma~\ref{lem:appendixA} below.
\end{proof}

The following lemma ensues readily from the definition \eqref{b_*21}. 
\begin{lemma}\label{lem:G-negative}
Let $\alpha \in \sigma(-\Delta_p)$ and $\beta>\beta^*_\mathcal{U}(\alpha)$. 
	Then 
$G_\beta(\varphi) < 0$ for all $\varphi \in ES(p;\alpha)\setminus\{0\}$. 
\end{lemma}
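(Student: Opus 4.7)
The plan is to observe that the assertion is essentially a direct consequence of the definition \eqref{b_*21} of $\beta^*_{\mathcal{U}}(\alpha)$ as a supremum, so there is no genuine obstacle here; the only issue to keep track of is that the quantities in $G_\beta(\varphi)$ actually make sense, and that the relevant strict inequality survives.

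First, I would note that since $\alpha \in \sigma(-\Delta_p)$, the set $ES(p;\alpha)\setminus\{0\}$ is nonempty, and any $\varphi$ in it belongs to $\W \subset W_0^{1,q}$ (because $p>q$ and $\Omega$ is bounded), so both $\intO |\nabla\varphi|^q\,dx$ and $\intO |\varphi|^q\,dx$ are finite, with the latter strictly positive. Consequently, the Rayleigh quotient appearing in \eqref{b_*21} is well-defined for every $\varphi \in ES(p;\alpha)\setminus\{0\}$.

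Next, directly from \eqref{b_*21},
\begin{equation*}
\frac{\intO |\nabla \varphi|^q\,dx}{\intO |\varphi|^q\,dx} \leq \beta^{*}_{\mathcal{U}}(\alpha) \quad \text{for all } \varphi \in ES(p;\alpha)\setminus\{0\}.
\end{equation*}
Multiplying by $\intO |\varphi|^q\,dx > 0$ and subtracting $\beta \intO |\varphi|^q\,dx$, I obtain
\begin{equation*}
G_\beta(\varphi) = \intO |\nabla\varphi|^q\,dx - \beta \intO |\varphi|^q\,dx \leq \bigl(\beta^{*}_{\mathcal{U}}(\alpha) - \beta\bigr) \intO |\varphi|^q\,dx.
\end{equation*}
Since by hypothesis $\beta > \beta^{*}_{\mathcal{U}}(\alpha)$ \emph{strictly}, the factor $\beta^{*}_{\mathcal{U}}(\alpha)-\beta$ is strictly negative, while $\intO |\varphi|^q\,dx > 0$, which yields $G_\beta(\varphi) < 0$, as required. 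The whole argument amounts to unpacking the definition of the supremum, and no compactness or attainment of $\beta^{*}_{\mathcal{U}}(\alpha)$ is needed — the strict inequality $\beta > \beta^{*}_{\mathcal{U}}(\alpha)$ in the hypothesis does all the work.
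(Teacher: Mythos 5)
Your proof is correct and follows exactly the route the paper intends: the paper gives no separate argument, stating only that the lemma ``ensues readily from the definition \eqref{b_*21},'' and your unpacking of the supremum together with the strict inequality $\beta>\beta^*_{\mathcal{U}}(\alpha)$ is precisely that argument.
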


\begin{lemma}\label{lem:const-map} 
Let $\alpha \in \mathbb{R}$ and $k \in \mathbb{N}$. 
If $\beta>\lambda_{k+1}(q)$, then there exist an odd map
$h_0\in C(S^k,\W)$ and $t_0>0$ such that 
$$
\max_{z\in S^{k}} \E(t_0 h_0(z)) <0. 
$$
\end{lemma}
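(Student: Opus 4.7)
The plan is to combine the minimax characterization \eqref{lambda_n} of $\lambda_{k+1}(q)$ with the subhomogeneity $q < p$. Since $\beta > \lambda_{k+1}(q)$, applying \eqref{lambda_n} with $r = q$ and index $k+1$ produces an odd map $h \in C(S^k, S(q))$ with $\max_{z \in S^k} \|\nabla h(z)\|_q^q < \beta$, that is, $G_\beta(h(z)) = \|\nabla h(z)\|_q^q - \beta < 0$ on $S^k$. By compactness of $S^k$ and continuity of $G_\beta$ on $W_0^{1,q}$, there exists $\delta > 0$ such that $G_\beta(h(z)) \leq -\delta$ uniformly on $S^k$.

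The map $h$ only takes values in $W_0^{1,q}$, whereas we need the image in $W_0^{1,p}$ so that $H_\alpha$ is finite along it. I would overcome this by approximating $h$ by an odd continuous map $h_0 \colon S^k \to W_0^{1,p}$, using the density of $C_c^\infty(\Omega) \subset W_0^{1,p} \cap W_0^{1,q}$ in $W_0^{1,q}$. Explicitly: cover the compact set $h(S^k) \subset W_0^{1,q}$ by finitely many $W_0^{1,q}$-balls $B(g_i, \eta)$ with $g_i \in C_c^\infty(\Omega)$; take a continuous partition of unity $\{\phi_i\}$ on $S^k$ subordinate to the pullback cover; set $g(z) := \sum_i \phi_i(z) g_i$, which is continuous from $S^k$ into the finite-dimensional subspace $\mathrm{span}\{g_1, \ldots, g_N\} \subset W_0^{1,p}$; and antisymmetrize via $h_0(z) := \tfrac{1}{2}(g(z) - g(-z))$. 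Using the oddness of $h$, one verifies that $h_0$ is odd, continuous into $W_0^{1,p}$, and satisfies $\max_{z \in S^k} \|\nabla (h_0(z) - h(z))\|_q \leq \eta$. Since $G_\beta$ is continuous on $W_0^{1,q}$, choosing $\eta$ small enough retains $G_\beta(h_0(z)) \leq -\delta/2$ uniformly on $S^k$.

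Since $h_0(S^k)$ is compact in $W_0^{1,p}$, the quantity $M := \sup_{z \in S^k} |H_\alpha(h_0(z))|$ is finite. For $t > 0$,
$$
E_{\alpha, \beta}(t h_0(z)) = \frac{t^p}{p} H_\alpha(h_0(z)) + \frac{t^q}{q} G_\beta(h_0(z)) \leq \frac{M\, t^p}{p} - \frac{\delta\, t^q}{2q},
$$
and because $q < p$, the first term is dominated by the second as $t \to 0^+$, so for $t_0 > 0$ sufficiently small one obtains $\max_{z \in S^k} E_{\alpha, \beta}(t_0 h_0(z)) < 0$, which completes the proof. The main technical obstacle is the approximation step: producing $h_0$ while simultaneously preserving oddness, continuity into $W_0^{1,p}$, and $W_0^{1,q}$-proximity to $h$; this is routine via the partition-of-unity plus antisymmetrization device above, but must be executed carefully to ensure that $G_\beta$ remains uniformly strictly negative along the deformation.
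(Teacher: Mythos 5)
Your proposal is correct and follows essentially the same route as the paper: extract an odd map into $S(q)$ from the minimax characterization of $\lambda_{k+1}(q)$, approximate it by an odd continuous map into $\W$ using density of $C_0^\infty(\Omega)$ together with a finite cover and a partition of unity, and then exploit $q<p$ so that the strictly negative $t^q$-term dominates for small $t>0$. The only difference is cosmetic: you achieve oddness by the a posteriori antisymmetrization $h_0(z)=\tfrac{1}{2}(g(z)-g(-z))$, whereas the paper constructs a partition of unity with the symmetry property $\tilde{\rho}_i(-z)=0$ whenever $\tilde{\rho}_i(z)>0$ and sets $h_0(z)=\sum_i u_{z_i}(\tilde{\rho}_i(z)-\tilde{\rho}_i(-z))$; both devices preserve $W_0^{1,q}$-proximity to the original map, so the uniform strict negativity of $G_\beta$ along $h_0$ survives.
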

\begin{proof} 
Let $\beta>\lambda_{k+1}(q)$ and choose $\varepsilon \in \mathbb{R}$ satisfying 
\begin{equation}\label{eq:map-0}
0<\varepsilon <\frac{1}{2} \quad \text{and} \quad 
\frac{\lambda_{k+1}(q)+2\varepsilon}{(1-2\varepsilon)^q}
<\beta-\varepsilon. 
\end{equation}
By the definition of $\lambda_{k+1}(q)$, 
there exists a map $h_1\in\mathscr{F}_{k+1}(q)$ such that 
\begin{equation}\label{esteps1}
\max_{z\in S^k} \|\nabla h_1(z)\|_q^q <\lambda_{k+1}(q)+\varepsilon.
\end{equation}
Note that by taking $t > 0$ small enough it is easy to get $\max\limits_{z\in S^{k}} \E(t h_1(z)) <0$. However, $h_1 \in C(S^k, S(q))$, and we do not know a priori that $h_1 \in C(S^k, \W)$. Hence the arguments below are needed.

Since $C_0^\infty(\Omega)$ is a dense subset of $W_0^{1,q}$ and $h_1$ is odd, 
for any $z\in S^k$ we can find $u_z \in C_0^\infty(\Omega)$ 
such that 
\begin{equation}\label{eq:map-1} 
u_{-z}=-u_z,
\quad 
\left|\|\nabla h_1(z)\|_q^q - \|\nabla u_z\|_q^q \right| < \varepsilon, \quad \text{and} \quad 
\|h_1(z)-u_z\|_q <\varepsilon. 
\end{equation}
By the continuity of $h_1$, 
for any $z\in S^k$ there exists $\delta(z) \in (0, 1)$ such that 
\begin{equation}\label{eq:map-000} 
\|h_1(z)-h_1(y)\|_q <\varepsilon ~\text{ for all }~ y \in S^k ~\text{ with }~ |z-y|<\delta(z). 
\end{equation}
Considering  $\min\{\delta(z),\delta(-z)\}$ instead of $\delta(z)$, 
we may assume that $\delta$ is even. 
Note that \eqref{eq:map-1} and \eqref{eq:map-000} lead to 
\begin{equation}\label{eq:map-2} 
\|u_z-h_1(y)\|_q < 2\varepsilon ~\text{ for all }~ y \in S^k ~\text{ such that }~ |z-y|<\delta(z).
\end{equation}
Due to the compactness of $S^k$, we may choose a finite number of points $z_i \in S^k$, $i=1,2,\dots,m$, such that 
$$
S^k \subset \bigcup_{i=1}^m \left[B(z_i,\delta(z_i)) 
\cup B(-z_i,\delta(-z_i)) \right],
$$
where $B(z_i,\delta(z_i)) \subset \mathbb{R}^{k+1}$ is a ball of radius $\delta(z_i)$
centered at the point $z_i$. 
Now, for each $i=1,2,\dots,m$ we take a function $\rho_i \in C_0(\mathbb{R}^{k+1})$ such that 
$$
\text{supp } \rho_i = \overline{B(z_i,\delta(z_i))} ~\text{ and }~ 
\rho_i > 0 ~\text{ in }~ B(z_i,\delta(z_i)). 
$$
Note that $B(z_i,\delta(z_i))\cap B(-z_i,\delta(-z_i))=\emptyset$ 
for all $i=1,2,\dots,m$, since  $\delta(z_i)=\delta(-z_i)<1$.
Thus, $\rho_i(-z)=0$ whenever  $\rho_i(z) > 0$.
Define 
$$
\tilde{\rho}_i(z):=\frac{\rho_i(z)}{\sum_{j=1}^m (\rho_j(z)+\rho_j(-z))}
~\text{ for }~
 z \in S^k. 
$$
Since $\left\{B(z_i,\delta(z_i)) 
\cup B(-z_i,\delta(-z_i)) \right\}_{i=1}^m$ is an open covering 
of $S^k$, it is easy to see that $\tilde{\rho}_i \in C(S^k)$  for all $i=1,2,\dots,m$. Moreover, 
\begin{equation}\label{eq:condition}
0\leq \tilde{\rho}_i \leq 1, \quad \tilde{\rho}_i(-z)=0 
~\text{ provided }~ \tilde{\rho}_i(z)>0, 
~\text{ and }~
\sum_{j=1}^m (\tilde{\rho}_j(z)+\tilde{\rho}_j(-z)) = 1 
\end{equation}
for all $z\in S^k$ and $i=1,2,\dots,m$. That is, $\{\tilde{\rho}_i\}_{i=1}^m$ forms a partition of unity of $S^k$.
Set 
$$
h_0(z) := 
\sum_{i=1}^m 
\left(
\tilde{\rho}_i(z)u_{z_i} + \tilde{\rho}_i(-z)u_{-z_i}
\right)
\equiv 
\sum_{i=1}^m 
u_{z_i} \left(
\tilde{\rho}_i(z) - \tilde{\rho}_i(-z)
\right)
~\text{ for }~ z \in S^k. 
$$ 
Evidently, $h_0$ is odd, and the continuity of $\tilde{\rho}_i$ implies that $h_0 \in C(S^k, \W)$. 

Let us show that 
$\max\limits_{z\in S^{k}} \E(t h_0(z)) <0$ for sufficiently small $t > 0$. 
First, for all $z \in S^k$ there holds
\begin{align}\notag
\|\nabla h_0(z)\|_q
&\leq \sum_{i=1}^m 
\|\nabla u_{z_i}\|_q \left(
\tilde{\rho}_i(z)
+\tilde{\rho}_i(-z)
\right) \\
\label{eq:map-3}
&< 
(\lambda_{k+1}(q)+2\varepsilon)^{1/q}
\sum_{i=1}^m (\tilde{\rho}_i(z)+\tilde{\rho}_i(-z)) 
=
(\lambda_{k+1}(q)+2\varepsilon)^{1/q},
\end{align}
where we used that $\|\nabla u_{z_i}\|_q^q < \lambda_{k+1}(q) + 2\varepsilon$, by virtue of \eqref{eq:map-1} and	
\eqref{esteps1}. 
Moreover, 
$h_0(z) \neq 0$ for all $z\in S^k$. Indeed, using the convexity of 
$\|\cdot\|_q^q$, the oddness of $h_1$, 
\eqref{eq:condition} and \eqref{eq:map-2}, we derive 
\begin{align*}
\|h_1(z)-h_0(z)\|_q^q
&=
\|\sum_{i=1}^m 
\left(
\tilde{\rho}_i(z)(h_1(z)-u_{z_i})
+
\tilde{\rho}_i(-z)(h_1(z)-u_{-z_i})
\right)
\|_q^q 
\\ 
&\leq 
\sum_{i=1}^m
\left( \tilde{\rho}_i(z)\|h_1(z)-u_{z_i}\|_q^q 
+
\tilde{\rho}_i(-z)\|u_{z_i} - h_1(-z)\|_q^q
\right)
< 2^q\varepsilon^q, 
\end{align*}
since $\tilde{\rho}_i(-z)>0$ 
if and only if $-z\in B(z_i,\delta(z_i))$. Hence,  $\|h_0(z)\|_q \geq \|h_1(z)\|_q-2\varepsilon=1-2\varepsilon>0$ 
for every $z\in S^k$. 
Now using \eqref{eq:map-3} and \eqref{eq:map-0}, we get
\begin{equation*}
\frac{\|\nabla h_0(z)\|_q^q}{\|h_0(z)\|_q^q} 
<\frac{\lambda_{k+1}(q)+2\varepsilon}{\|h_0(z)\|_q^q}
\le \frac{\lambda_{k+1}(q)+2\varepsilon}{(1-2\varepsilon)^q}
<\beta-\varepsilon
\end{equation*}
for all $z\in S^k$. 
Thus, for sufficiently small $t > 0$ and any $z\in S^k$ we obtain 
\begin{align*} 
\E(t h_0(z)) &=
\frac{t^p}{p}\left(\|\nabla h_0(z)\|_p^p-\alpha\|h_0(z)\|_p^p\right)
+\frac{t^q}{q}\left(\|\nabla h_0(z)\|_q^q-\beta\|h_0(z)\|_q^q\right)
\\ 
&\leq 
\frac{t^p}{p}
\max_{z\in S^k}\left(\|\nabla h_0(z)\|_p^p-\alpha\|h_0(z)\|_p^p\right)
-\frac{t^q (1-2\varepsilon)^q\varepsilon}{q}	<0,
\end{align*}
since $q<p$. This is the desired conclusion.
\end{proof} 

In the sequel, we will also need the following variant of the deformation lemma. 
We refer the reader to \cite[Theorem 3.2]{Chang} for the proof. 
\begin{lemma}\label{retract}
Let $\Psi$ be a $C^1$-functional on a Banach space $W$, let $\Psi$ satisfies 
the Palais--Smale condition at any level $c\in[a,b]$ 
and let $\Psi$ has no critical values in $(a,b)$. 
Assume that either 
$K_a:=\{u \in W:~ \Psi'(u)=0,\ \Psi(u)=a\,\}$ 
consists only of isolated points, or $K_a =\emptyset$. 
Define  
$\Psi^c := \{u \in W:~ \Psi(u) \leq c\,\}$. 
Then, there exists 
$\eta\in C([0,1]\times W, W)$ such that the following hold:
\begin{itemize}
\item[{\rm (i)}] $\Psi(\eta(s,u))$ is nonincreasing in $s$ 
for every $u\in W$;
\item[{\rm (ii)}] $\eta(s,u)=u$\ for any $u\in \Psi^a$, $s \in [0,1]$;
\item[{\rm (iii)}] $\eta(0,u)=u$ and $\eta(1,u)\in \Psi^a$ for any 
$\Psi^b\setminus K_b$;
\item[{\rm (iv)}] if $\Psi$ is even, then $\eta(s,\cdot)$ is odd 
for all $s \in [0,1]$.
\end{itemize}
That is,
$\Psi^a$ is a strong deformation retract 
of $\Psi^b\setminus K_b$. 
\end{lemma}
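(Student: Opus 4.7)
The plan is to construct $\eta$ as a suitably reparametrized pseudo-gradient flow in which the motion is frozen below level $a$ and in a small neighborhood of the isolated critical set $K_a$. First, since $\Psi$ satisfies $(PS)_c$ for every $c\in[a,b]$ and there are no critical values in $(a,b)$, the set $K_b$ is compact, while $K_a$ is either empty or, by assumption, a set of isolated points; together with compactness this forces $K_a=\{u_1,\dots,u_N\}$ to be finite. Let $\mathcal{N}_\varepsilon$ be the union of disjoint open $\varepsilon$-balls around the $u_i$. A standard consequence of the Palais--Smale condition then yields constants $\delta,\mu>0$ with $\|\Psi'(u)\|\ge \mu$ for all $u\in\Psi^{-1}([a-\delta,b+\delta])\setminus\mathcal{N}_\varepsilon$.

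Second, I would build a locally Lipschitz pseudo-gradient vector field $V$ on the regular points with $\|V(u)\|\le 2$ and $\langle \Psi'(u),V(u)\rangle\ge\|\Psi'(u)\|$. If $\Psi$ is even, I replace $V(u)$ by $\tfrac12(V(u)-V(-u))$; convexity of the pseudo-gradient cone keeps this a pseudo-gradient and it is now odd. Choose locally Lipschitz cutoff functions $\chi_1$ on $\mathbb{R}$, equal to $1$ on $[a,b]$ and vanishing off $[a-\delta/2,b+\delta]$, and $\chi_2$ on $W$, equal to $1$ outside $\mathcal{N}_{2\varepsilon}$ and vanishing on $\mathcal{N}_\varepsilon$ (made even when $\Psi$ is even). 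Set $X(u):=-\chi_1(\Psi(u))\,\chi_2(u)\,V(u)$, which is globally defined, bounded, locally Lipschitz, and odd in the symmetric case. Let $\sigma(t,u)$ be the solution of $\dot\sigma=X(\sigma)$, $\sigma(0,u)=u$, and define $\eta(s,u):=\sigma(Ts,u)$ for a constant $T>0$ to be fixed.

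Properties (i), (ii), (iv) are then immediate: $\frac{d}{dt}\Psi(\sigma(t,u))=-\chi_1\chi_2\langle \Psi',V\rangle\le 0$, the flow is frozen on $\Psi^a$ because $\chi_1(\Psi(u))=0$ there, and $X$ is odd whenever $\Psi$ is even so $\sigma$ and $\eta$ are odd in $u$. The substantive content is (iii): we must choose $T$ large enough that $\Psi(\sigma(T,u_0))\le a$ for every $u_0\in\Psi^b\setminus K_b$. Outside $\mathcal{N}_{2\varepsilon}$ and between levels $a$ and $b$, the energy decays at rate at least $\mu/2$; any excursion into $\mathcal{N}_{2\varepsilon}$ traverses distance $\ge\varepsilon$ at speed $\le 2$, so lasts at most time $\varepsilon$. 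This gives a uniform bound on the time needed to drop $\Psi$ to level $a$.

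The main obstacle is the control of trajectories that repeatedly enter and leave $\mathcal{N}_{2\varepsilon}$, since the cutoff kills the descent near $K_a$. I would handle this by a compactness/contradiction argument: if for some sequence $u_0^{(n)}\in\Psi^b\setminus K_b$ the trajectories $\sigma(t,u_0^{(n)})$ remained above $\Psi\ge a+1/n$ for times $t_n\to\infty$, then the segments spent outside $\mathcal{N}_{2\varepsilon}$ would have total length $\to\infty$ and contribute an unbounded drop in $\Psi$, contradicting $\Psi\ge a$; the segments inside $\mathcal{N}_{2\varepsilon}$ being of length at most $\varepsilon$ each, the $(PS)$ condition applied along the outside segments would produce a critical point at a level in $[a,b]$, which must lie in $K_a$ and hence in $\mathcal{N}_\varepsilon$, a contradiction. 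This fixes $T$ uniformly, completes (iii), and therefore shows that $\Psi^a$ is a strong deformation retract of $\Psi^b\setminus K_b$.
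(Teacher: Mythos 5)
The paper offers no proof of this lemma at all --- it simply refers to \cite[Theorem~3.2]{Chang} --- so your attempt is necessarily an independent route. Unfortunately the construction as written has genuine gaps, concentrated exactly at the two places where the second deformation lemma is delicate: the critical sets $K_a$ and $K_b$.

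First, freezing the flow on a neighbourhood $\mathcal{N}_\varepsilon$ of $K_a$ destroys property (iii). A trajectory that would otherwise converge to an isolated critical point $u_i\in K_a$ (think of the stable set of a saddle at level $a$) enters $\mathcal{N}_\varepsilon$ at a level strictly above $a$ and then stops there forever, so $\eta(1,u_0)\notin\Psi^a$. Your excursion estimate (``lasts at most time $\varepsilon$'') presumes every trajectory crosses $\mathcal{N}_{2\varepsilon}$ and exits on the other side, which is precisely what fails for such orbits. The actual content of the lemma is that these trajectories, run for \emph{infinite} time with no cutoff, converge --- by the Palais--Smale condition combined with the isolatedness of the points of $K_a$ --- to a single point of $K_a\subset\Psi^a$; one then reparametrizes $[0,+\infty]$ onto $[0,1]$ (for instance so that $\Psi(\eta(s,u))=(1-s)\Psi(u)+sa$), and the continuity of the resulting map at $s=1$ is the hard step that your cutoff sidesteps at the cost of the conclusion. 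Relatedly, your $\chi_1$ is internally inconsistent: you take $\chi_1\equiv 1$ on $[a,b]$, so the flow is \emph{not} frozen at level $a$ and (ii) fails; if instead $\chi_1$ vanishes on $(-\infty,a]$, the descent rate degenerates as $\Psi\to a^{+}$ and the flow need not reach $\Psi^a$ in finite time.

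Second, a single uniform $T$ cannot exist when $K_b\neq\emptyset$. The pseudo-gradient $V$ is undefined on $K_b$, so $X$ is not ``globally defined'' and $\eta(s,u)=\sigma(Ts,u)$ is not continuous on all of $[0,1]\times W$; moreover your bound $\|\Psi'(u)\|\ge\mu$ on $\Psi^{-1}([a-\delta,b+\delta])\setminus\mathcal{N}_\varepsilon$ is false near $K_b$, which is not excised by $\mathcal{N}_\varepsilon$ (and the Palais--Smale condition is only assumed for levels in $[a,b]$, so levels in $[a-\delta,a)\cup(b,b+\delta]$ are not controlled either). Starting points of $\Psi^b\setminus K_b$ arbitrarily close to $K_b$ require arbitrarily long descent times --- this is exactly why $K_b$ is removed from the set being deformed and why the deformation time must depend on the starting point and blow up at $K_b$. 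A uniform-$T$ reparametrization therefore cannot yield (iii). To repair the argument you should follow the standard proof of the second deformation lemma (\cite[Theorem~3.2]{Chang}), which is organized around a starting-point-dependent arrival time at level $a$ and a separate continuity analysis for trajectories converging to $K_a$.
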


\subsection{General existence result via minimax arguments} 
In this subsection we prove a result on the existence of an \textit{abstract nontrivial} solution to $(GEV;\alpha,\beta)$. 
Let us emphasize that this result does not guarantee that the obtained solution is  \textit{sign-changing}. 
(However, it is shown in \cite{BobkovTanaka2015} that for sufficiently large 
$\alpha$ and $\beta$ problem $(GEV;\alpha,\beta)$ has 
no sign-constant solutions). 

Recall that we denote $k_\alpha := \min\{k \in \mathbb{N}:~ \alpha < 
\lambda_{k+1}(p)\}$. 
\begin{theorem}\label{thm:general} 
	Assume that $\alpha \in \overline{\mathbb{R}\setminus\sigma(-\Delta_p)}$. 
	Then for any 
$\beta>\max\{\beta^*_\mathcal{U}(\alpha), \lambda_{k_\alpha+1}(q)\}$ the problem $(GEV;\alpha,\beta)$ has a nontrivial solution $u$ with 
$\E(u)<0$, where $\beta^*_\mathcal{U}(\alpha)$ is defined by \eqref{b_*2}.
\end{theorem}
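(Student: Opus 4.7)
The plan is to establish the result first under the assumption $\alpha \notin \sigma(-\Delta_p)$, where the Palais--Smale condition is available (Lemma~\ref{PS}), via a linking-plus-deformation argument, and then to recover the case $\alpha \in \sigma(-\Delta_p) \cap \overline{\mathbb{R}\setminus\sigma(-\Delta_p)}$ by an approximation in $\alpha$.

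For the main case, set $k = k_\alpha$, so that $\alpha < \lambda_{k+1}(p)$ and $\lambda_{k+1}(p) > \max\{\alpha, 0\}$. Since $\beta > \lambda_{k+1}(q)$, Lemma~\ref{lem:const-map} provides an odd map $h_0 \in C(S^k, \W)$ and $t_0 > 0$ with $\max_{S^k} \E(t_0 h_0) < 0$. Define
\[
\Gamma := \{h \in C(S^k_+, \W) : h|_{S^{k-1}} = t_0 h_0|_{S^{k-1}}\}
\quad\text{and}\quad
c := \inf_{h \in \Gamma}\sup_{z \in S^k_+} \E(h(z)).
\]
Because the boundary map $t_0 h_0|_{S^{k-1}}$ is odd, Lemma~\ref{lem:link} guarantees that every $h \in \Gamma$ satisfies $h(S^k_+) \cap Y(\lambda_{k+1}(p)) \neq \emptyset$; Lemma~\ref{lem:bdd-below} then yields $c \geq \inf_{Y(\lambda_{k+1}(p))} \E > -\infty$. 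The test choice $h = t_0 h_0|_{S^k_+} \in \Gamma$ gives $c \leq \max_{S^k}\E(t_0 h_0) < 0$. A standard deformation argument using Lemma~\ref{retract} (which applies thanks to Lemma~\ref{PS}) identifies $c$ with a critical value: assuming $c$ is not critical, choose $\varepsilon > 0$ small enough that the interval $[c-\varepsilon,c+\varepsilon]$ contains no critical values and $c-\varepsilon > \max_{S^{k-1}}\E(t_0 h_0)$; then the retract $\eta(1,\cdot)$ leaves $t_0 h_0|_{S^{k-1}}$ fixed (it already sits in $\E^{a}$) and pushes any near-optimal $h \in \Gamma$ below $c$, contradicting the definition of $c$. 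The resulting critical point $u \neq 0$ satisfies $\E(u) = c < 0$.

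For the approximation, pick $\alpha_n \to \alpha$ with $\alpha_n \notin \sigma(-\Delta_p)$ and $\alpha_n < \lambda_{k_\alpha+1}(p)$; then $k_{\alpha_n} \leq k_\alpha$, $\beta^{*}_\mathcal{U}(\alpha_n) = -\infty$, and the previous step yields nontrivial critical points $u_n$ of $E_{\alpha_n,\beta}$ with energies $c_n$ eventually confined to some $[-K, -\delta] \subset (-\infty,0)$ (the upper bound $\max_{S^k_+} E_{\alpha_n,\beta}(t_0 h_0) \to \max_{S^k_+}\E(t_0 h_0) < 0$, and the lower bound from Lemma~\ref{lem:bdd-below} is uniform in $\alpha_n$). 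The crucial step is the boundedness of $\{u_n\}$ in $\W$. If instead $\|\nabla u_n\|_p \to +\infty$, then $v_n := u_n/\|\nabla u_n\|_p$ satisfies $E'_{\alpha,\beta}(u_n)/\|\nabla u_n\|_p^{p-1} = (\alpha-\alpha_n)|v_n|^{p-2}v_n \to 0$ in $(\W)^*$, so Lemma~\ref{lem:bdd-PS} yields $v_n \to v_0 \in ES(p;\alpha)\setminus\{0\}$ strongly. The Nehari identity $c_n = \frac{p-q}{pq}\,G_\beta(u_n)$, divided by $\|\nabla u_n\|_p^q$, forces $G_\beta(v_0) = 0$, which contradicts Lemma~\ref{lem:G-negative} under the hypothesis $\beta > \beta^{*}_\mathcal{U}(\alpha)$. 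With boundedness secured, the $(S_+)$-property (Remark~\ref{remark:S_+}) delivers a strong limit $u_n \to u_0$, and passing to the limit in the weak formulation produces a nontrivial solution of $(GEV;\alpha,\beta)$ with $\E(u_0) = \lim c_n < 0$.

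The main obstacle is the approximation step: this is exactly where the full hypothesis $\beta > \beta^{*}_\mathcal{U}(\alpha)$ is used, to prevent the rescaled sequence from concentrating on the $p$-eigenspace at $\alpha$. The linking/deformation construction for $\alpha \notin \sigma(-\Delta_p)$ is essentially standard once Lemmas~\ref{lem:link}, \ref{lem:bdd-below}, \ref{PS}, \ref{lem:const-map}, and \ref{retract} are in hand.
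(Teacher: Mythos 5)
Your overall architecture coincides with the paper's: a direct argument when $\alpha\notin\sigma(-\Delta_p)$, followed by an approximation $\alpha_n\to\alpha$ with $\alpha_n\notin\sigma(-\Delta_p)$ for the resonant case. The approximation step is essentially the paper's proof: uniform confinement of the critical levels $c_n$ in a compact subinterval of $(-\infty,0)$, Lemma~\ref{lem:bdd-PS} applied to $v_n=u_n/\|\nabla u_n\|_p$, the identity $c_n=\frac{p-q}{pq}G_\beta(u_n)$ forcing $G_\beta(v_0)=0$, the contradiction with Lemma~\ref{lem:G-negative}, and finally the $(S_+)$-property. That part is correct.

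The gap is in the nonresonant case. You minimax over the class $\Gamma$ of maps with \emph{fixed} boundary $t_0h_0|_{S^{k-1}}$, and your deformation step requires choosing $\varepsilon>0$ with $c-\varepsilon>\max_{S^{k-1}}\E(t_0h_0)$, i.e.\ the strict linking inequality $c>\max_{S^{k-1}}\E(t_0h_0)$. This is neither verified nor available. Since every $h\in\Gamma$ agrees with $t_0h_0$ on $S^{k-1}\subset S^k_+$, one automatically has $c\ge\max_{S^{k-1}}\E(t_0h_0)$, but the only lower bound you actually prove is $c\ge\delta_0:=\inf_{Y(\lambda_{k+1}(p))}\E$, and there is no inequality between $\delta_0$ and $\max_{S^{k-1}}\E(t_0h_0)$: Lemma~\ref{lem:const-map} gives no control of the boundary energies relative to $\delta_0$, so the degenerate case $c=\max_{S^{k-1}}\E(t_0h_0)$ cannot be excluded. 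In that case $\eta(1,\cdot)$ may move the boundary, the deformed map leaves $\Gamma$, and the contradiction does not materialize. The paper avoids this entirely: it supposes there is no critical value in the whole interval $[\delta_0-1,\rho]$, deforms the \emph{entire} sphere $t_0h_0(S^k)\subset\E^{\rho}$ into $\E^{\delta_0-1}$ via Lemma~\ref{retract}, and uses that $\eta(1,\cdot)$ is odd (assertion (iv) of Lemma~\ref{retract}, since $\E$ is even) so that $\eta(1,t_0h_0(\cdot))\big|_{S^{k-1}}$ remains odd; Lemma~\ref{lem:link} then applies to the deformed map and produces a point of $Y(\lambda_{k+1}(p))$ at energy $\le\delta_0-1<\delta_0$, a contradiction. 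Your argument becomes correct if you replace $\Gamma$ by the class of all $h\in C(S^k_+,\W)$ whose restriction to $S^{k-1}$ is odd (a class invariant under odd deformations), or simply adopt the paper's interval argument; as written, the fixed-boundary minimax reverts to classical linking geometry, which is precisely what fails here.
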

\begin{proof}
Since $\alpha \in \overline{\mathbb{R}\setminus\sigma(-\Delta_p)}$, we need to investigate two cases: 
\begin{itemize} 
	\item[{\rm (i)}] $\alpha \not\in \sigma(-\Delta_p)$;
	\item[{\rm (ii)}] 
	$\alpha \in \sigma(-\Delta_p)$ and there exists a sequence 
	$\{\alpha_n\}_{n \in \mathbb{N}} \subset \mathbb{R}\setminus\sigma(-\Delta_p)$ such that
	$\lim\limits_{n \to +\infty}\alpha_n=\alpha$.
\end{itemize}

\noindent
\textbf{Case (i).}
Let $\beta > \lambda_{k_\alpha+1}(q)=\max\{\beta^*_\mathcal{U}(\alpha), \lambda_{k_\alpha+1}(q)\}$. Then Lemma~\ref{lem:const-map} guarantees 
the existence of an odd $h_0\in C(S^{k_\alpha},\W)$ and of $t_0>0$ such that
$$
\rho:=\max_{z\in S^{k_\alpha}}\E(t_0 h_0(z)) <0. 
$$
Moreover, by the definition of $k_\alpha$ we have $\alpha < \lambda_{k_\alpha+1}(p)$, and hence Lemma~\ref{lem:bdd-below} implies that 
$\E$ is bounded from below on $Y(\lambda_{k_\alpha+1}(p))$, that is, 
\begin{equation}\label{eq:delta_0}
\delta_0:=\inf\{\E(u):\, u\in Y(\lambda_{k_\alpha+1}(p))\}>-\infty.
\end{equation}
Since $t_0 h_0(\cdot)$ is odd and $\E$ is even,  Lemma~\ref{lem:link} justifies that $\E(t_0 h_0(z_0)) \geq \delta_0$ for some $z_0 \in S^{k_\alpha}_+$, and hence $\delta_0 \leq \rho$.
We are going to show that 
$\E$ has at least one critical value in $[\delta_0-1,\rho]$. 
Suppose, by contradiction, that 
$\E$ has no critical values in $[\delta_0-1,\rho]$. 
Recall that $\E$ satisfies the Palais--Smale condition by Lemma~\ref{PS} 
because we are assuming that $\alpha\not\in\sigma(-\Delta_p)$. Then, due to Lemma~\ref{retract}, there exists 
$\eta\in C([0,1] \times \W,\W)$ such that 
$\eta(s,\cdot)$ is odd for every $s \in [0,1]$ and 
\begin{equation}\label{general:eq:1}
\E(\eta(1,t_0 h_0(z))) \leq \delta_0 - 1
\quad \text{for all} \quad 
z\in S^{k_\alpha}. 
\end{equation}
On the other hand, noting that $\eta(1,t_0 h_0(\cdot))\big|_{S^{k_\alpha}_+} 
\in C(S^{k_\alpha}_+, \W)$ and 
$\eta(1,t_0 h_0(\cdot))\big|_{S^{k_\alpha-1}}$ is odd, 
Lemma~\ref{lem:link} guarantees the existence of 
a point $z_1 \in S^{k_\alpha}_+$ such that 
$\eta(1,t_0 h_0(z_1))\in Y(\lambda_{k_\alpha+1}(p))$, whence 
$\delta_0 \leq \E(\eta(1,t_0 h_0(z_1)))$ 
by the definition of $\delta_0$ (see \eqref{eq:delta_0}).
However, this contradicts \eqref{general:eq:1}. 

\noindent
\textbf{Case (ii).} 
	Let $\beta > \max\{\beta^*_\mathcal{U}(\alpha), \lambda_{k_\alpha+1}(q)\}$. As in the former case, according to Lemma~\ref{lem:const-map}, there exist an odd map
	$h_0\in C(S^{k_\alpha},\W)$ and $t_0 > 0$ such that
	\begin{equation}\label{eq:gen-2-2}
	\rho:=\max_{z\in S^{k_\alpha}} \E(t_0 h_0(z)) <0. 
	\end{equation}
	Recalling that $\alpha < \lambda_{k_\alpha+1}(p)$ and discarding, if necessary, a finite number of terms of the sequence $\{\alpha_n\}_{n \in \mathbb{N}}$, we may suppose that $\alpha_n < \lambda_{k_\alpha+1}(p)$ and  
	\begin{equation}\label{eq:gen-2-3}
	\rho_n:=\max_{z\in S^{k_\alpha}} E_{\alpha_n,\beta}(t_0 h_0(z)) \leq 
	\rho + 
	t_0^p
	\frac{|\alpha_n-\alpha|}{p}
	\max_{z\in S^{k_\alpha}}\|h_0(z)\|_p^p <0
	\end{equation}
	for all $n \in \mathbb{N}$.
	Since $\alpha_n\not\in\sigma(-\Delta_p)$, we apply the proof of the case (i) to each $\alpha_n<\lambda_{k_\alpha+1}(p)$ and 
	$\beta > \lambda_{k_\alpha+1}(q)$, and hence obtain a sequence of critical values $c_n$ of $E_{\alpha_n,\beta}$ such that 
	\begin{equation}\label{eq:gen-2-5}
	\delta_n-1 \leq c_n \leq \rho_n,
	\quad \text{where} \quad 
	\delta_n:=\inf\{E_{\alpha_n,\beta}(u):~ u\in Y(\lambda_{k_\alpha+1}(p))\}>-\infty. 
	\end{equation}
	Let $u_n \in \W$ be a critical point of $E_{\alpha_n,\beta}$ corresponding to the level  
	$c_n$, i.e., 	$E_{\alpha_n,\beta}(u_n)=c_n$.
	We proceed to show that $\{u_n\}_{n \in \mathbb{N}}$ is bounded in $\W$. Suppose, by contradiction, that $\|\nabla u_n\|_p \to +\infty$ as $n \to +\infty$. Set $v_n:=u_n/\|\nabla u_n\|_p$ and note that
	\begin{equation}\label{eq:gen-2-0} 
	\|\E'(u_n)\|_{(\W)^*}=
	\|\E'(u_n)-E_{\alpha_n,\beta}'(u_n)\|_{(\W)^*}
	\leq
	\frac{|\alpha_n-\alpha|}{\lambda_1(p)}\|\nabla u_n\|_p^{p-1}=o(1)\|\nabla u_n\|_p^{p-1}
	\end{equation}
	as $n \to +\infty$. 
	Thus, due to Lemma~\ref{lem:bdd-PS}, we have that $v_n$ converges strongly in $\W$, up to a subsequence, to some $v_0\in ES(p,\alpha) \setminus \{0\}$. Let us prove that $G_\beta(v_0)=0$.
	By \eqref{eq:gen-2-3}, we have 
	\begin{equation}\label{eq:gen-2-6} 
	\left(\frac{1}{q}-\frac{1}{p}\right)G_\beta(v_n)
	=\frac{1}{\|\nabla u_n\|_p^q}\left(E_{\alpha_n,\beta}(u_n)-
	\frac{1}{p}
	\left<
	E_{\alpha_n,\beta}'(u_n),u_n\right> \right) = \frac{c_n}{\|\nabla u_n\|_p^q} \leq 
	\frac{\rho_n}{\|\nabla u_n\|_p^q}<0. 
	\end{equation}
	To obtain a converse estimate, we show that $\delta_n$ is bounded from below. 
	Since $\lim\limits_{n \to +\infty}\alpha_n=\alpha<\lambda_{k_\alpha+1}(p)$, we can choose $\alpha_0$ such that 
	$\alpha_n < \alpha_0 < \lambda_{k_\alpha+1}(p)$ for all sufficiently large $n \in \mathbb{N}$. 
	Thus,  Lemma~\ref{lem:bdd-below} implies that 
	$E_{\alpha_0,\beta}$ is bounded from below on $Y(\lambda_{k_\alpha+1}(p))$. 
	Noting that $E_{\alpha_n,\beta}(u) \geq E_{\alpha_0,\beta}(u)$ 
	for any $u \in \W$, we get 
	$\delta_n \geq \inf\{E_{\alpha_0,\beta}(u):~ u\in Y(\lambda_{k_\alpha+1}(p))\} > -\infty$ 
	for all $n \in \mathbb{N}$ large enough, which is the desired boundedness. 
	Using this fact, the two equalities in \eqref{eq:gen-2-6}, and \eqref{eq:gen-2-5}, we derive that
	\begin{equation}\label{eq:gen-2-7} 
	0>\left(\frac{1}{q}-\frac{1}{p}\right)G_\beta(v_n)
	\geq  
	\frac{\delta_n-1}{\|\nabla u_n\|_p^q} \to 0
	\end{equation}
	as $n \to +\infty$, which leads to
	$G_\beta(v_0)=0$, because $v_n \to v_0$ strongly in $\W$. On the other hand, since $\alpha \in \sigma(-\Delta_p)$ and $\beta > \beta^*_\mathcal{U}(\alpha)$, we get
	\begin{equation}\label{eq:gen-2-1}
	G_\beta(\varphi)=\|\nabla \varphi\|_q^q-\beta\|\varphi\|_q^q \neq 0 
	~\text{ for all }~
	\varphi\in ES(p;\alpha)\setminus\{0\},
	\end{equation}
	see Lemma~\ref{lem:G-negative}. Hence, we obtain a contradiction, since $G_\beta(v_0) = 0$ and $v_0\in ES(p,\alpha) \setminus \{0\}$. 
	Thus, from  \eqref{eq:gen-2-0} it follows that $\{u_n\}_{n \in \mathbb{N}}$ is a 
	\textit{bounded} Palais--Smale sequence for $\E$. 
	Then, the $(S_+)$-property of the operator $-\Delta_p - \Delta_q$ (see Remark~\ref{remark:S_+}) implies that
	$u_n$ converges strongly in $\W$, up to a subsequence, 
	to some critical point $u_0$ of $\E$. 
	Furthermore, $u_0$ is nontrivial and its energy is negative, since 
	$$
	\E(u_0)=\limsup_{n \to +\infty} E_{\alpha_n,\beta}(u_n) 
	= \limsup_{n \to +\infty} c_n \leq \limsup_{n \to +\infty} \rho_n 
	\leq \rho + o(1) < 0
	$$
	by \eqref{eq:gen-2-2} and \eqref{eq:gen-2-3}. 
\end{proof}

\begin{remark}\label{rem:negarive}
	Note that the proof of the case (ii) gives more. Namely, if $\alpha \in \sigma(-\Delta_p)$ and $\lim\limits_{n \to +\infty}\alpha_n=\alpha$ for some  sequence 
	$\{\alpha_n\}_{n \in \mathbb{N}} \subset \mathbb{R}\setminus\sigma(-\Delta_p)$, and $\beta > \lambda_{k_\alpha+1}(q)$ is such that \eqref{eq:gen-2-1} holds, then there exists a nontrivial solution to $(GEV;\alpha, \beta)$.
\end{remark}

\subsection{General existence result via the descending flow}
In the last part of this section, we use the descending flow method to provide an existence result for $(p,q)$-Laplace equations with a nonlinearity in the general form.

Suppose that 
$h\colon \Omega \times \mathbb{R} \to \mathbb{R}$ is 
a Carath\'eodory function satisfying $h(x,0)=0$ for a.e.\ $x \in \Omega$ 
and there exists $C>0$ such that 
\begin{equation}\label{h}
|h(x,s)| \leq C(1 + |s|^{p-1}) 
~\text{ for every }~
s \in \mathbb{R}
~\text{ and a.e.\ }~ x \in \Omega.
\end{equation}
Under \eqref{h}, we define a $C^1$-functional $J$ on $\W$ by 
\begin{equation}\label{eq:J}
J(u) := \frac{1}{p}\intO|\nabla u|^p\,dx 
+\frac{1}{q}\intO |\nabla u|^q\,dx -\intO \int_0^{u(x)} h(x,s)\,ds \, dx.
\end{equation}
For simplicity, we denote the positive cone in 
$C^1_0(\overline{\Omega})$ 
by
\begin{equation}\label{eq:P}
P:=\{u\in C_0^1(\overline{\Omega}):~ u(x)> 0 ~\text{ for all }~ x \in \Omega\}. 
\end{equation}

The following result can be proved by the same arguments as \cite[Theorem~11]{MT2}. 
For the reader's convenience, 
we give a sketch of the proof in Appendix~B. 

\begin{theorem}\label{thm:1} 
Assume that the following conditions hold: 
\begin{itemize} 
\item[$(A1)$] there exists $\lambda_0>0$ such that 
$$
h(x,u)\,u + \lambda_0 (|u|^q+|u|^p) \geq 0 
~\text{ for every }~ u \in \mathbb{R} 
~\text{ and a.e.\ }~ x \in \Omega; 
$$
\item[$(A2)$] there exists $\gamma\in C([0,1], \C)$ such that 
$\gamma(0)\in P$, $\gamma(1)\in -P$ and 
$\max\limits_{s \in [0,1]} J(\gamma(s))<0$. 
\end{itemize} 
If, moreover,  $J$ is coercive on $\W$, then $J$ has at least three critical points 
$w_1\in {\rm int\,}P$, $w_2 \in -{\rm int\,}P$,  
and $w_3\in \C \setminus (P\cup -P)$,  such that 
$J(w_i) \leq \max\limits_{s \in [0,1]} J(\gamma(s))<0$ for $i=1, 2, 3$. Here
$$
{\rm int\,}P := \{u \in P:~ \partial u(x)/\partial \nu < 0 ~\text{ for all }~ x \in \partial \Omega\},
$$
and $\nu$ denotes the unit outer normal vector to $\partial\Omega$.
\end{theorem}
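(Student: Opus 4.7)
The plan is to follow the descending flow / invariant sets method (in the spirit of Liu--Sun, adapted to the $(p,q)$-Laplacian as in \cite{MT2}). The central object is an auxiliary operator $A\colon \W \to \W$ where, for each $u \in \W$, the element $v = A(u)$ is defined as the unique weak solution of the monotone auxiliary problem
\begin{equation*}
-\Delta_p v - \Delta_q v + \lambda_0 \bigl(|v|^{p-2}v + |v|^{q-2}v\bigr)
= h(x,u) + \lambda_0 \bigl(|u|^{p-2}u + |u|^{q-2}u\bigr)
\quad \text{in } \Omega,\ v|_{\partial\Omega} = 0.
\end{equation*}
Existence and uniqueness of $v$ follow from the strict monotonicity of the operator on the left (via the $(S_+)$-property recorded in Remark~\ref{remark:S_+}), and $A$ is continuous on $\W$. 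Assumption $(A1)$ makes the right-hand side nonnegative whenever $u \geq 0$, so the $C^{1,\gamma}$-regularity and the strong maximum principle (cf. \cite[Remark~1, p.~3284]{BobkovTanaka2015}) give $A(P) \subset \mathrm{int}\,P$, and by symmetry $A(-P) \subset -\mathrm{int}\,P$. A direct test with $u - A(u)$ and the monotonicity of $-\Delta_p-\Delta_q+\lambda_0(|\cdot|^{p-2}\cdot + |\cdot|^{q-2}\cdot)$ yields the descent inequality $\langle J'(u), u - A(u)\rangle \gtrsim \|\nabla(u-A(u))\|_p^{p} + \|\nabla(u-A(u))\|_q^q$, so the vector field $u \mapsto u - A(u)$ serves as a pseudo-gradient and its fixed points are precisely the critical points of $J$.

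Next, I would introduce two open neighborhoods $D_+$ of $\mathrm{int}\,P$ and $D_-$ of $-\mathrm{int}\,P$ in $\W$, obtained as suitable $\W$-thickenings of the $C_0^1$-cone neighborhoods (a standard construction: pick a small closed $C_0^1(\overline{\Omega})$-ball around $\mathrm{int}\,P$, then take its $\W$-interior). The invariance property $A(\overline{D_+}) \subset D_+$ and $A(\overline{D_-}) \subset D_-$ is verified using continuity of $A$ in $\W$, the fact that $A$-images lie in $\C$, and the openness of $\mathrm{int}\,P$ inside $\C$. Since $J$ is coercive and satisfies the Palais--Smale condition (by $(S_+)$), minimization of $J$ over the closed invariant sets $\overline{D_+}$ and $\overline{D_-}$ produces minimizers that are fixed points of $A$, hence critical points $w_1 \in \mathrm{int}\,P$ and $w_2 \in -\mathrm{int}\,P$. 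Since $\gamma(0) \in P \subset \overline{D_+}$ and $\gamma(1) \in -P \subset \overline{D_-}$, we immediately obtain $J(w_1) \leq J(\gamma(0))$ and $J(w_2) \leq J(\gamma(1))$, both $\leq \max_{s\in[0,1]} J(\gamma(s)) < 0$.

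For the third (sign-changing) critical point, I would perform a minimax over the path class
\begin{equation*}
\Gamma := \bigl\{\eta \in C([0,1], \W) :~ \eta(0) \in \overline{D_+},\ \eta(1) \in \overline{D_-} \bigr\},
\qquad
c := \inf_{\eta \in \Gamma} \max_{s \in [0,1]} J(\eta(s)).
\end{equation*}
The path $\gamma$ from $(A2)$ belongs to $\Gamma$, so $c \leq \max_s J(\gamma(s)) < 0$. Since $\overline{D_+} \cap \overline{D_-} = \emptyset$ by construction, every admissible $\eta$ must cross $\W \setminus (D_+ \cup D_-)$, and I would apply a deformation lemma analogous to Lemma~\ref{retract} but modified so that the associated descending flow keeps $D_\pm$ positively invariant. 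This forces the critical point $w_3$ at level $c$ to lie in $\W \setminus (D_+ \cup D_-)$; as $\pm P \subset D_\pm$, the function $w_3$ is sign-changing, and its $C^{1,\gamma}_0(\overline{\Omega})$-regularity places it in $\C \setminus (P \cup -P)$ as required.

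The main obstacle is rigorously building the invariant neighborhoods $D_\pm$ in $\W$ and proving the modified deformation lemma that simultaneously keeps $D_+$ and $D_-$ positively invariant under the descending pseudo-gradient flow. The delicate point is that $A$ a priori only guarantees invariance in the $C_0^1$-topology (via the strong maximum principle), while the flow must be implemented in $\W$ where the Palais--Smale structure lives; bridging these two topologies is what makes the construction in Appendix~B technical and is the step where the coercivity of $J$, continuity of $A$, and the embedding $\W \hookrightarrow \C$ of critical points must all be carefully combined.
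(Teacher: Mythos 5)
Your overall architecture --- the auxiliary operator $A$ (which is exactly the paper's $B_\lambda=T_\lambda^{-1}(h(\cdot,u)+\lambda\psi(u))$ from Appendix~B), cone invariance via the strong maximum principle, and a descending flow producing one critical point in each of $\pm\,{\rm int}\,P$ plus a sign-changing one --- matches the paper's. But two of your steps would fail as written. First, the claim that $\overline{D_+}\cap\overline{D_-}=\emptyset$ ``by construction'' cannot hold: $0$ lies in the $\W$-closure of both ${\rm int}\,P$ and $-{\rm int}\,P$ (consider $\pm t\varphi_p\to 0$), so these two sets cannot be separated by disjoint open neighborhoods in $\W$; concretely, if $D_\pm=\{u:\,{\rm dist}_{\W}(u,\pm P)<\varepsilon\}$ then $t\varphi_p\in D_+\cap D_-$ for all small $t>0$. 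Consequently an admissible path in your class $\Gamma$ may pass from $D_+$ to $D_-$ through the overlap near the origin without ever meeting $\W\setminus(D_+\cup D_-)$, and the minimax argument for $w_3$ collapses. Rescuing it would require showing that near-optimal paths (at levels $\leq c<0=J(0)$) avoid the overlap region, which you do not do. The paper avoids the issue entirely: instead of metric neighborhoods it uses the basins of attraction $Q_\pm$ of $\pm\,{\rm int}\,P$ under the flow, which are automatically disjoint (a trajectory entering ${\rm int}\,P$ stays there by invariance, so it can never reach $-{\rm int}\,P$), open and invariant in $\C$; the path $\gamma$ then must meet $\partial Q_+$ at some $\gamma(s_+)$, and the flow from there converges to $w_3\in\partial Q_+$ with $\partial Q_+\cap(\pm P\setminus\{0\})=\emptyset$.

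Second, your construction of $D_\pm$ as ``the $\W$-interior of a small closed $C_0^1(\overline{\Omega})$-ball around ${\rm int}\,P$'' produces the empty set: for $N\geq 2$ a $C_0^1$-bounded set has empty interior in $\W$, since every $\W$-ball around a positive function contains sign-changing functions of arbitrarily large $C^1$-norm. This is precisely the topological obstruction you flag at the end, but it is not a deferrable technicality --- it is the reason the paper runs the flow in $C_0^1(\overline{\Omega})$ rather than in $\W$, after building a locally Lipschitz pseudo-gradient field $V_\lambda\colon\C\setminus K\to\C$ with $V_\lambda(\pm P\setminus K)\subset\pm\,{\rm int}\,P$ and the bootstrap estimates of Lemma~\ref{pgvf}~(iii)--(vi), which convert the $\W$-bounds \eqref{thm-3-1} coming from coercivity into $C^{1,\nu}_0(\overline{\Omega})$-compactness of the trajectories; without this, convergence of the flow to critical points in the $C_0^1$-topology (needed to place $w_1\in{\rm int}\,P$, $w_2\in-{\rm int}\,P$, $w_3\notin P\cup-P$) is unavailable. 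A smaller point: your descent inequality $\left<J'(u),u-A(u)\right>\gtrsim\|\nabla(u-A(u))\|_p^p+\|\nabla(u-A(u))\|_q^q$ is correct only for $2\leq q<p$; for $q<2$ or $p<2$ the estimates of Lemma~\ref{B:estimate} carry degenerate weights of the form $(\|u\|+\|A(u)\|)^{q-2}$, and controlling these via the a priori bound along the flow is what makes the Palais--Smale argument in the case $\tau(u_i)=\infty$ (which you do not discuss) go through.
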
 

We say that $v \in \W$ is a (weak) super-solution of $(GEV;\alpha,\beta)$ whenever for all nonnegative $\varphi \in \W$ there holds
$$
\intO |\nabla v|^{p-2}\nabla v\nabla \varphi\,dx
+\intO |\nabla v|^{q-2}\nabla v\nabla \varphi\,dx
\geq \alpha\intO |v|^{p-2}v\varphi\,dx+\beta\intO |v|^{q-2}v\varphi\,dx.
$$
Applying Theorem~\ref{thm:1} to a truncated functional corresponding to $E_{\alpha, \beta}$, 
we show the following result on the existence of nodal solutions 
to $(GEV; \alpha, \beta)$ with a negative energy. 
\begin{proposition}\label{prop:1} 
Let $\alpha \in \mathbb{R}$ and  $\beta>\lambda_2(q)$. 
If there exists a super-solution of $(GEV;\alpha,\beta)$ which belongs to ${\rm int}\, P$, then $(GEV;\alpha,\beta)$ has a nodal solution $u$ such that $\E(u)<0$. 
\end{proposition}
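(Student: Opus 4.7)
My strategy is to apply Theorem~\ref{thm:1} to a truncated version of $E_{\alpha,\beta}$ built from the super-solution $v \in {\rm int}\,P$, and then use $v$ again as a barrier to show that the three critical points of the truncated functional are in fact solutions of $(GEV;\alpha,\beta)$. Define the truncation $T(x,s) := \max\{-v(x), \min\{v(x), s\}\}$ and set
$$
h(x,s) := \alpha |T(x,s)|^{p-2} T(x,s) + \beta |T(x,s)|^{q-2} T(x,s),
$$
and let $J$ be the corresponding functional \eqref{eq:J}. Since $\|v\|_\infty<\infty$, $h$ is bounded on $\Omega\times\mathbb{R}$, which trivially gives the growth condition \eqref{h}, coercivity of $J$ on $\W$, and, after splitting cases $|s|\le v(x)$ and $|s|>v(x)$, condition $(A1)$ with $\lambda_0:=\max\{|\alpha|,|\beta|\}$ (on the outer region, $|h(x,s)s|\le(|\alpha|v^{p-1}+|\beta|v^{q-1})|s|$ is dominated by $\lambda_0(|s|^p+|s|^q)$ thanks to $|s|>v(x)$).

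For $(A2)$, the hypothesis $\beta>\lambda_2(q)$ enters through the characterization \eqref{second:mp}: there is a path $\gamma_0\in C([0,1],S(q))$ with $\gamma_0(0)=\varphi_q$, $\gamma_0(1)=-\varphi_q$ and $\max_{s}\|\nabla\gamma_0(s)\|_q^q<\beta$. Repeating the finite-cover/partition-of-unity device from the proof of Lemma~\ref{lem:const-map}, while choosing the representatives at the endpoints to be $\pm\varphi_q$ themselves, this path lifts to $\gamma_1\in C([0,1],C^1_0(\overline{\Omega}))$ with the same endpoints and $\|\nabla\gamma_1(s)\|_q^q<\beta\|\gamma_1(s)\|_q^q$ uniformly in $s$. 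I then take $\gamma(s):=\varepsilon\gamma_1(s)$ with $\varepsilon>0$ chosen so small that $|\varepsilon\gamma_1(s)(x)|\le v(x)$ pointwise on $\Omega\times[0,1]$; this is possible because $v\in{\rm int}\,P$ is bounded below by a positive multiple of $\operatorname{dist}(\cdot,\partial\Omega)$ and the image of $\gamma_1$ is $C^1_0$-compact. On this rescaled path the truncation is inactive, so
$$
J(\gamma(s)) = \frac{\varepsilon^p}{p} H_\alpha(\gamma_1(s)) + \frac{\varepsilon^q}{q} G_\beta(\gamma_1(s)),
$$
and the uniform strict negativity of $G_\beta(\gamma_1(s))$ together with $q<p$ yields $\max_s J(\gamma(s))<0$ after shrinking $\varepsilon$ once more. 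Since the endpoints $\pm\varepsilon\varphi_q$ lie in $\pm P$, $(A2)$ is verified.

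Theorem~\ref{thm:1} then provides critical points $w_1\in{\rm int}\,P$, $w_2\in-{\rm int}\,P$ and a sign-changing $w_3\in C^1_0(\overline{\Omega})\setminus(P\cup -P)$ of $J$, each with $J(w_i)<0$. It remains to show $|w_i|\le v$ in $\Omega$, which upgrades each $w_i$ to a weak solution of the untruncated $(GEV;\alpha,\beta)$ with $E_{\alpha,\beta}(w_i)=J(w_i)<0$, so that $u:=w_3$ is the required nodal solution. To obtain $w_i\le v$, I would test the equation $-\Delta_p w_i-\Delta_q w_i = h(\cdot,w_i)$ and the super-solution inequality for $v$ against the nonnegative function $(w_i-v)^+\in\W$ and subtract: on $\{w_i>v\}$ the truncation gives $h(x,w_i)=\alpha v^{p-1}+\beta v^{q-1}$, which kills the right-hand side, leaving the nonnegative monotonicity pairing of $-\Delta_p-\Delta_q$ on $\nabla(w_i-v)$ bounded above by zero; strict monotonicity then forces $(w_i-v)^+\equiv 0$. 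The symmetric argument with $(-v-w_i)^+$ delivers $w_i\ge -v$.

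The main obstacle is this final comparison: it hinges on the truncation being engineered so that $h(\cdot,w_i)$ coincides exactly with the super-solution data $\alpha v^{p-1}+\beta v^{q-1}$ on the overshoot set $\{w_i>v\}$, together with the strict monotonicity of $-\Delta_p-\Delta_q$. A subordinate technical point is the construction of the $C^1_0$-continuous path $\gamma_1$ preserving the endpoints $\pm\varphi_q$ and retaining the strict bound $\|\nabla\gamma_1\|_q^q<\beta\|\gamma_1\|_q^q$; this is handled by the same regularization device as in Lemma~\ref{lem:const-map}.
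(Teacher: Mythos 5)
Your proposal is correct and follows essentially the same route as the paper: truncate the nonlinearity at $\pm v$, verify $(A1)$ with $\lambda_0=\max\{|\alpha|,|\beta|\}$ and coercivity from boundedness of the truncation, build the low-energy path from the mountain-pass characterization \eqref{second:mp} of $\lambda_2(q)$ via the density/partition-of-unity device of Lemma~\ref{lem:const-map}, apply Theorem~\ref{thm:1}, and then run the comparison with $(u-v)^+$ and the super-solution inequality to remove the truncation. The only (harmless) deviations are that you carry out the barrier argument for all three critical points rather than just the sign-changing one, and you normalize the endpoints of the path to be exactly $\pm\varphi_q$, which $(A2)$ does not require.
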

\begin{proof}
Let $v\in {\rm int}\, P$ be a super-solution of $(GEV;\alpha,\beta)$ 
with $\alpha\in\mathbb{R}$ and $\beta>\lambda_2(q)$. 
Note that $-v$ becomes a negative sub-solution of $(GEV;\alpha,\beta)$.
Using $v$, we truncate the right-hand side of $(GEV;\alpha,\beta)$ as follows: 
\begin{equation*} 
f(x, s):=
\begin{cases} 
\alpha v(x)^{p-1} +\beta v(x)^{q-1}  &\text{ if } s> v(x), \\
\alpha |s|^{p-2}s + \beta|s|^{q-2}s
&\text{ if } -v(x) \leq s \leq  v(x), \\
-\alpha v(x)^{p-1} -\beta v(x)^{q-1}  
&\text{ if } s< -v(x). 
\end{cases} 
\end{equation*}
It is easy to see that 
$f$ is the Carath\'eodory function and $f(x,0)=0$ for all $x \in \Omega$. Moreover, 
$f$ satisfies \eqref{h} and, taking $\lambda_0 = \max\{|\alpha|,|\beta|\}$, it satisfies the assumption $(A1)$ of Theorem~\ref{thm:1}.

Define a corresponding \textit{truncated} $C^1$-functional $I$ on $\W$ by 
\begin{align*}
I(u) := 
\frac{1}{p}\intO|\nabla u|^p\,dx 
+\frac{1}{q}\intO |\nabla u|^q\,dx 
-\intO \int_0^{u(x)} f(x,s)\,ds\,dx.
\end{align*}
Note that the boundedness of $v$ in $\Omega$ implies the boundedness of $f$, and therefore $I$ is coercive on $\W$. To apply Theorem~\ref{thm:1} it remains to show that $(A2)$ holds. 
To this end, let us construct an appropriate path $\gamma_0$. 
Choose $\varepsilon>0$ satisfying $\lambda_2(q)+2\varepsilon <\beta$. 
By the characterization \eqref{second:mp} of $\lambda_2(q)$, there exists 
$\gamma\in C([0,1],S(q))$ such that 
$\gamma(0)=\varphi_q\in {\rm int}\,P$, 
$\gamma(1)=-\varphi_q\in -{\rm int}\,P$, and 
$\max\limits_{s \in [0,1]}\|\nabla \gamma(s)\|_q^q < \lambda_2(q)+\varepsilon$. 
Using the density arguments (as in the proof of Lemma~\ref{lem:const-map}), we can obtain a path
$\tilde{\gamma}\in C([0,1], \C\setminus\{0\})$ such that 
$\tilde{\gamma}(0)\in P$, $\tilde{\gamma}(1)\in -P$, and 
$$
\|\nabla \tilde{\gamma}(s)\|_q^q \leq 
(\lambda_2(q)+2\varepsilon)\|\tilde{\gamma}(s)\|_q^q 
$$
for every $s \in [0,1]$. 
Since $v\in {\rm int}\,P$ and $\tilde{\gamma}\in C([0,1], \C\setminus\{0\})$, we get for any $t > 0$ small enough, $s \in [0,1]$ and $x \in \Omega$ that
$$
-v(x) \leq t \tilde{\gamma}(s)(x) \leq v(x), 
$$
and hence
 $f(x,t \tilde{\gamma}(s))=
t^{p-1}\alpha |\tilde{\gamma}(s)|^{p-2}\tilde{\gamma}(s)
+t^{q-1}\beta|\tilde{\gamma}(s)|^{q-2}\tilde{\gamma}(s)$.
Therefore,
\begin{align*}
I(t \tilde{\gamma}(s)) 
&= \frac{t^{p}}{p}
\left(\|\nabla \tilde{\gamma}(s)\|_p^p - \alpha\|\tilde{\gamma}(s)\|_p^p
\right)
+
\frac{t^{q}}{q}
\left(\|\nabla \tilde{\gamma}(s)\|_q^q-\beta\|\tilde{\gamma}(s)\|_q^q \right) 
\\
&\leq 
t^{q}
\left(
\frac{t^{p-q}}{p}
\left[
\max_{s \in [0,1]}\|\nabla \tilde{\gamma}(s)\|_p^p 
+ |\alpha|\max_{s \in [0,1]}\|\tilde{\gamma}(s)\|_p^p
\right] + 
\frac{\lambda_2(q)+2\varepsilon-\beta}{q} 
\min_{s \in [0,1]}\|\tilde{\gamma}(s)\|_q^q 
\right)<0
\end{align*}
for sufficiently small $t>0$, 
since $q<p$, $\min\limits_{s \in [0,1]}\|\tilde{\gamma}(s)\|_q^q > 0$, and $\lambda_2(q)+2\varepsilon <\beta$. Thus, for such a small $t > 0$ the path $\gamma_0(s) := t \tilde{\gamma}(s)$  satisfies the assumption $(A2)$ of Theorem~\ref{thm:1}.

As a result, according to Theorem~\ref{thm:1}, 
we obtain a sign-changing critical point $u\in \C \setminus (P\cup -P)$ of $I$ 
satisfying $I(u) \leq \max\limits_{t\in[0,1]} I(\gamma_0(t))<0$. 
By the standard argument, we can show that $-v\leq u\leq v$ in $\Omega$. 
In fact, recalling that $v$ is a super-solution of $(GEV;\alpha,\beta)$ and taking $(u-v)^+\in\W$ as a test function for $I'(u) - E_{\alpha,\beta}'(v)$, we obtain 
\begin{align*}
0 
&\leq \int_{u>v} 
\left(|\nabla u|^{p-2}\nabla u -|\nabla v|^{p-2}\nabla v\right)
\left(\nabla u-\nabla v\right)\,dx 
\\ 
&\qquad +\int_{u>v} 
\left(|\nabla u|^{q-2}\nabla u -|\nabla v|^{q-2}\nabla v\right)
\left(\nabla u-\nabla v\right)\,dx 
\\
&\le \intO f(x,u)(u-v)^+\,dx
-\intO (\alpha v^{p-1}+\beta v^{q-1})(u-v)^+\,dx =0,
\end{align*}
which implies that $(u-v)^+\equiv 0$ and hence $u \leq v$ in $\Omega$. 
Similarly, taking $-(u-(-v))^-$ as a test function, 
we get $u \geq -v$. 
Therefore, $u$ is a nodal solution of $(GEV;\alpha,\beta)$ 
and $\E(u) = I(u) \leq \max\limits_{s \in [0,1]} I(\gamma_0(s)) < 0$. 
\end{proof}

\section{Proofs of the main results}\label{sec:proofs}
In this section, we collect the proofs of our main results stated in Subsection~\ref{subsec:main}. 

\begin{proof*}{Theorem~\ref{thm:nonexist}} 
Recall that any sign-changing solution of $(GEV;\alpha,\beta)$ belongs to 
the nodal Nehari set $\mathcal{M}_{\alpha,\beta}$ defined by \eqref{def:M}. 
At the same time,  $\mathcal{M}_{\alpha,\beta}$ is empty under the assumptions of the theorem, 
as is shown in Lemma~\ref{lemma:emptinessM}, which completes the proof.
\end{proof*}

\begin{proof*}{Theorem~\ref{thm:positive}} 
The desired conclusion follows directly from the combination of Theorem~\ref{thm:minimizer} and 
Lemmas~\ref{lem:solution} and \ref{lem:twonodal}.
\end{proof*}

\begin{proof*}{Theorem~\ref{thm:neg1}}
	Note that problem	 $(GEV;\alpha,\beta)$ possesses an abstract nontrivial solution $u \in \W$ with $\E(u) < 0$ for any $\alpha \in \overline{\mathbb{R}\setminus\sigma(-\Delta_p)}$ and
	$\beta > \max\left\{\beta^{*}_{\mathcal{U}}(\alpha), \lambda_{k_\alpha+1}(q)\right\} \geq \lambda_2(q)$ by Theorem~\ref{thm:general}. 
	If $u$ is a nodal solution, then we are done.
	If $u$ is a nontrivial nonnegative solution, then 
	 $u \in {\rm int}\,P$ 
	(see, e.g., \cite[Remark~1, p.~3284]{BobkovTanaka2015}), and hence Proposition~\ref{prop:1} 
	guarantees the existence of 
	a nodal solution $v$ of $(GEV;\alpha,\beta)$ 
	such that $\E(v)<0$.
\end{proof*} 

\begin{proof*}{Theorem~\ref{thm:neg2}}
	If $\alpha<\lambda_1(p)$ or $\lambda_1(p) < \alpha < \lambda_2(p)$, then for all $\beta > \lambda_2(q)$ there exists a nodal solution, as
	follows from Theorem~\ref{thm:neg1}. 
	If $\alpha = \lambda_1(p)$, then, as noted in Remark~\ref{rem:negarive}, Theorem~\ref{thm:general} implies the existence of an abstract nontrivial negative energy solution of $(GEV;\alpha,\beta)$ for any $\beta > \lambda_2(q)$ such that $G_\beta(\varphi_p) \neq 0$. Since the first eigenfunction $\varphi_p$ of $-\Delta_p$ is unique, up to a multiplier, we derive the existence under the assumption $\beta\not=\|\nabla \varphi_p\|_q^q/\|\varphi_p\|_q^q$.
	If the obtained solution changes its sign, then we are done. Otherwise, we apply Proposition~\ref{prop:1} and obtain the existence of a nodal solution with a negative energy.
\end{proof*}

Finally, we will prove the nonexistence result in the one-dimensional case. 
\begin{proof*}{Theorem~\ref{nonexist:N1}} 
Let $N=1$ and $\Omega = (0,T)$, $T>0$.
We temporarily denote by $\lambda_k(r,S)$ the $k$th eigenvalue of $-\Delta_r$ on $(0,S)$ subject to zero Dirichlet boundary conditions, $r>1$, $S>0$ (see Appendix~A). 
Suppose, by contradiction, that $\alpha\leq \lambda_2(p,T)$ and $\beta\leq \lambda_2(q,T)$, but there exists a nodal solution $u$ for 
$(GEV;\alpha,\beta)$. 
Evidently, there is at least one nodal domain of $u$ which length $S$ is less than or equal to $T/2$. 
Using, if necessary, the translation of the coordinate axis, we may assume that 
$u$ is a constant-sign solution of $(GEV;\alpha, \beta)$ on interval $(0,S)$.
Define $v := u$ on $(0,S)$ and $v = 0$ on $[S, T/2]$. Clearly, $v \in W_0^{1,p}(0,S) \subset W_0^{1,p}(0,T/2)$. 
Moreover, it is not hard to see that
\begin{equation*}
\lambda_2(r,T)=\lambda_1(r,T/2)=
\left(\frac{2S}{T}\right)^r\lambda_1(r,S) \leq \lambda_1(r,S) 
\end{equation*}
for any $r>1$. 
Thus, \eqref{charact-1st-ev} and the assumption $S \leq T/2$ lead to the inequalities
\begin{align} 
\label{eq:N1-3}
\alpha \leq \lambda_2(p,T) \leq \lambda_1(p,S) 
\leq \frac{\int_0^S |v'|^p\,dt}{\int_0^S |v|^p\,dt}
\quad \text{and} \quad 
\beta \leq \lambda_2(q,T) \leq \lambda_1(q,S) 
\leq \frac{\int_0^S |v'|^q\,dt}{\int_0^S |v|^q\,dt}. 
\end{align}
Taking now $v$ as a test function for \eqref{weaksolution}, 
we arrive at
$$
0\leq \int_0^S |v'|^p\,dt-\alpha\int_0^S |v|^p\,dt
=\beta\int_0^S |v|^q\,dt - \int_0^S |v'|^q\,dt \leq 0, 
$$
and hence we have equalities in 
\eqref{eq:N1-3}. 
On the other hand, the simplicity of $\lambda_1(r,S)$ implies that $v$ is the first eigenfunction corresponding to 
$\lambda_1(p,S)$ and $\lambda_1(q,S)$,  simultaneously. 
However, this is a contradiction, since $\varphi_p$ and $\varphi_q$ are linearly independent for $N=1$ (see \cite[Lemma~4.3]{KTT} or Lemma~\ref{IV} below).
\end{proof*}

\par
\bigskip
\noindent
{\bf Acknowledgments.} This work was 
supported by JSPS KAKENHI Grant Number 15K17577. 
The first author wishes to thank Tokyo University of Science, where the main constructions and results of the article were obtained, for the invitation and hospitality. 
The work of the first author was also supported by the project LO1506 of the Czech Ministry of Education, Youth and Sports.

\section*{Appendix~A} 
\addcontentsline{toc}{section}{\protect\numberline{}Appendix~A}

In this section, we show some relations between eigenvalues and eigenfunctions of the $p$- and $q$-Laplacians in the one-dimensional case. Consider the eigenvalue problem
$$
\left\{
\begin{aligned}
-(|u'|^{r-2} u')' &= \lambda |u|^{r-2} u \quad {\rm in}\ (0,T), \\
u(0) = u(T) &= 0, 
\end{aligned}
\right.
$$
where $r > 1$ and $T > 0$.
It is known (cf. \cite[Theorem~3.1]{drabman}) that $\sigma(-\Delta_r)$ is exhausted by eigenvalues $\lambda_k(r) =  (r-1)\left(\frac{k \pi_r}{T}\right)^r$,
where $\pi_r = \frac{2\pi}{r \sin(\pi/r)}$. (It is not hard to see that $\pi_r$ is a decreasing function of $r>1$.) The corresponding eigenfunctions are denoted by $\sin_r\left(\frac{k \pi_r t}{T}\right)$, where $\sin_r(t)$ is the inverse function of $\int_0^x (1-s^r)^{-1/r} \, ds$, $x \in [0,1]$, extended periodically and anti-periodically from $[0, \pi_r/2]$ to the whole  $\mathbb{R}$ (see also \cite{busheled}). 
By construction, $\sin_r\left(\frac{k \pi_r t}{T}\right)$ has exactly $k$ nodal domains of the length $T/k$ on $(0, T)$.
As usual, we denote the first eigenfunction $\sin_r\left(\frac{\pi_r t}{T}\right)$ as $\varphi_r$.

For the convenience of the reader we briefly prove that the first eigenfunctions $\varphi_p$ and $\varphi_q$ are linearly independent; see also \cite[Lemma 4.3]{KTT} for a different proof. 

\begin{lemma}\label{IV} 
Let $N=1$ and $q \neq p$. 
Then $\varphi_p$ and $\varphi_q$ are linearly independent. 
\end{lemma}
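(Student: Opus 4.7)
The plan is to proceed by contradiction, assuming $\varphi_p = c\,\varphi_q$ for some $c\in\mathbb{R}\setminus\{0\}$, and to derive an identity forcing $\pi_p = \pi_q$, which contradicts the strict monotonicity of the map $r\mapsto\pi_r$.

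First, I would exploit the explicit representation recalled in this appendix, $\varphi_r(t) = \sin_r(\pi_r t / T)$. Both eigenfunctions attain their unique interior maximum at $t = T/2$, since $\sin_r(\pi_r/2) = 1$ by the very definition of $\pi_r/2$ as $\int_0^1(1-s^r)^{-1/r}\,ds$. Evaluating the assumed identity $\varphi_p(T/2) = c\,\varphi_q(T/2)$ at this point therefore gives $1 = c$, so that $\varphi_p \equiv \varphi_q$ on $[0,T]$. Differentiating at $t=0$ and using $\sin_r'(0) = 1$ yields $\varphi_p'(0) = \pi_p/T$ and $\varphi_q'(0) = \pi_q/T$, hence $\pi_p = \pi_q$.

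To conclude, I would show that $\pi_r$ is strictly decreasing in $r\in(1,+\infty)$. Writing $\pi_r = 2\pi/(r\sin(\pi/r))$, it suffices to verify that $g(r) := r\sin(\pi/r)$ is strictly increasing. Substituting $\theta = \pi/r \in (0,\pi)$, a short computation gives $g'(r) = \sin\theta - \theta\cos\theta$, and since $\frac{d}{d\theta}(\sin\theta - \theta\cos\theta) = \theta\sin\theta > 0$ on $(0,\pi)$ with value $0$ at $\theta = 0$, one gets $g'(r) > 0$. Thus $\pi_p \neq \pi_q$ whenever $p \neq q$, contradicting the equality obtained above.

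As a backup (to make the argument independent of the closed-form $\sin_r$ should the reader prefer a more ODE-flavoured proof), I would note that if $\varphi_p = c\,\varphi_q$ with $c>0$, then $u := \varphi_p$ solves simultaneously $-(|u'|^{p-2}u')' = \lambda_1(p)|u|^{p-2}u$ and $-(|u'|^{q-2}u')' = \lambda_1(q)|u|^{q-2}u$. Multiplying each ODE by $u'$ and integrating produces the two first integrals
\begin{equation*}
\tfrac{r-1}{r}|u'|^r + \tfrac{\lambda_1(r)}{r}|u|^r = \text{const}_r, \qquad r\in\{p,q\}.
\end{equation*}
Equating the values at $t=0$ (where $u=0$) and at the interior maximum (where $u'=0$ and $|u|$ attains its maximum $M$) yields $|u'(0)|/M = (\lambda_1(r)/(r-1))^{1/r} = \pi_r/T$ for both $r=p$ and $r=q$, again forcing $\pi_p = \pi_q$. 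The only non-trivial step in either variant is the monotonicity of $\pi_r$, which is a routine one-variable calculus argument.
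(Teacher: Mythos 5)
Your proof is correct, and your main argument is essentially the paper's: both reduce a hypothetical linear dependence to the identity $\pi_p=\pi_q$ by comparing the local behaviour of $\sin_p$ and $\sin_q$ near the origin, and then invoke the strict monotonicity of $r\mapsto\pi_r$. The paper extracts the first-order coefficient by Taylor-expanding the defining integrals of the inverses of $\sin_p$ and $\sin_q$, whereas you differentiate the identity $\varphi_p=\varphi_q$ at $t=0$ using $\sin_r'(0)=1$; this is the same information, packaged slightly more cleanly. You also supply two details the paper glosses over: the normalization step showing the proportionality constant must equal $1$ (via the common maximum value $1$ attained at $t=T/2$), and an actual proof that $\pi_r$ is strictly decreasing, which the paper only asserts parenthetically. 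Your ODE backup via the first integrals $\tfrac{r-1}{r}|u'|^r+\tfrac{\lambda_1(r)}{r}|u|^r=\mathrm{const}$, evaluated at the endpoint and at the interior maximum, is a genuinely different and self-contained route that avoids the closed-form $\sin_r$ entirely; it is also correct, and arguably more robust since it only uses the equation and the value $\lambda_1(r)=(r-1)(\pi_r/T)^r$.
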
 
\begin{proof} 
	Suppose, by contradiction, that $\varphi_p(t) = \varphi_q(t)$ for all $t \in [0,T]$. In particular, we have $$
	\sin_p\left(\frac{\pi_p t}{T}\right) = \sin_q\left(\frac{\pi_q t}{T}\right)
	$$ 
	for all $t \in [0, T/2]$. By the definitions of $\sin_p$ and $\sin_q$, we obtain
	$$
	\frac{1}{\pi_p} \int_0^x (1-s^p)^{-1/p} \, ds = 
	\frac{1}{\pi_q} \int_0^x (1-s^q)^{-1/q} \, ds ~\text{ for all }~ x \in [0,1].
	$$
	Using a Taylor series, we get $(1-s^p)^{-1/p} = 1 + O(s^p)$ and 
	$(1-s^q)^{-1/q} = 1 + O(s^q)$ in a neighborhood of $s = 0$.
	Thus, 
	$$
	\int_0^x \left[\left(\frac{1}{\pi_p} - \frac{1}{\pi_q} \right)  + O(s^p) + O(s^q) \right] \, ds = 0
	$$	
	for sufficiently small $x > 0$, which implies that $\pi_p = \pi_q$, since $p,q>1$. However, this contradicts the monotonicity of $\pi_r$ with respect to $r>1$.
\end{proof}

Next, we prove the main result of the section.
\begin{lemma}\label{lem:appendixA}
Let $N=1$ and $1<q<p<+\infty$. 
Then $\lambda_1(q) < \frac{\|\varphi_p'\|^q_q}{\|\varphi_p\|^q_q} < \lambda_2(q)$.
\end{lemma}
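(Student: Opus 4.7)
The plan is to establish the two bounds separately. The lower bound $\lambda_1(q) < \|\varphi_p'\|_q^q/\|\varphi_p\|_q^q$ is immediate from the variational principle \eqref{charact-1st-ev}, which yields $\lambda_1(q)\le \|\varphi_p'\|_q^q/\|\varphi_p\|_q^q$; equality would force $\varphi_p$ to be a first eigenfunction of $-\Delta_q$, hence proportional to $\varphi_q$ by the simplicity of $\lambda_1(q)$, contradicting Lemma~\ref{IV}.

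For the upper bound I would exploit the explicit one-dimensional form $\varphi_p(t) = \sin_p(\pi_p t/T)$, normalized so that $\max \varphi_p = 1$. Multiplying the eigenvalue equation $-(|\varphi_p'|^{p-2}\varphi_p')' = \lambda_1(p)\varphi_p^{p-1}$ by $\varphi_p'$ and integrating produces the first integral
$$(p-1)|\varphi_p'|^p + \lambda_1(p)\varphi_p^p = \lambda_1(p),$$
so $\varphi_p'(t) = (\lambda_1(p)/(p-1))^{1/p}(1-\varphi_p^p)^{1/p}$ on $[0, T/2]$, where $\varphi_p$ increases from $0$ to $1$. The change of variable $u = \varphi_p(t)$, together with symmetry about $T/2$ and the identity $\lambda_1(p) = (p-1)(\pi_p/T)^p$, will recast the Rayleigh quotient as
$$\frac{\|\varphi_p'\|_q^q}{\|\varphi_p\|_q^q} = \left(\frac{\pi_p}{T}\right)^q \cdot \frac{\int_0^1 (1-u^p)^{(q-1)/p}\,du}{\int_0^1 u^q(1-u^p)^{-1/p}\,du}.$$

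Since $\lambda_2(q) = (q-1)(2\pi_q/T)^q$, the desired bound becomes the concrete inequality
$$\int_0^1 (1-u^p)^{(q-1)/p}\,du < (q-1)\,2^q\,(\pi_q/\pi_p)^q \int_0^1 u^q(1-u^p)^{-1/p}\,du.$$
The substitution $v=u^p$ turns both integrals into Beta functions, and a single integration by parts yields the identity $\int_0^1 (1-u^p)^{(q-1)/p}\,du = \tfrac{q-1}{q}\int_0^1(1-u^p)^{(q-p-1)/p}\,du$; combined with Euler's reflection formula $\pi_p = \tfrac{2\pi}{p\sin(\pi/p)}$, this reduces the claim to a closed-form inequality among Gamma values at $1/p$, $(q\pm 1)/p$, and $(p-1)/p$. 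The main obstacle is strictness: the inequality becomes nearly sharp as $p\to\infty$ and $q\to 1^+$, where it degenerates to $\pi_q^q > (q+1)/(q-1)$ (equivalently, the Rayleigh quotient $2^q(q+1)/T^q$ of the limiting tent function $\psi_\infty(t)=1-|2t/T-1|$ stays strictly below $\lambda_2(q)$). The cleanest way I see to finish is to combine this limiting estimate at $p=\infty$ with a monotonicity check for $p \mapsto \|\varphi_p'\|_q^q/\|\varphi_p\|_q^q$ on $[q,+\infty)$, or else to verify the Gamma-function inequality by estimating the two Beta integrals directly.
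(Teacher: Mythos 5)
Your treatment of the lower bound is correct and coincides with the paper's (simplicity of $\lambda_1(q)$ plus the linear independence of $\varphi_p$ and $\varphi_q$ from Lemma~\ref{IV}). Your reduction of the upper bound is also correct and, after the substitution $v=u^p$, lands exactly on the paper's key inequality \eqref{eq:beta1}: the two integrals you obtain are $\frac{1}{p}B\bigl(\frac{1}{p},1+\frac{q-1}{p}\bigr)$ and $\frac{1}{p}B\bigl(\frac{q+1}{p},\frac{p-1}{p}\bigr)$, so your route via the first integral of the ODE is just a change-of-variables rederivation of the Bushell--Edmunds formulas the paper quotes.

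The genuine gap is that you stop there: the verification of \eqref{eq:beta1} for all $1<q<p$ is the entire difficulty of the lemma, and neither of your two proposed ways to finish is carried out or even clearly viable. Your integration-by-parts identity is true but does not by itself relate the two Beta values in a useful way, and the ``closed-form inequality among Gamma values'' you arrive at is precisely the statement still to be proved. Your route (a) rests on the unproved claim that $p\mapsto \|\varphi_p'\|_q^q/\|\varphi_p\|_q^q$ is monotone on $(q,+\infty)$; this is plausible but is itself a nontrivial assertion about a ratio of Beta functions in the parameter $p$, arguably no easier than the original inequality, and you give no argument for it. (The limiting inequality $\pi_q^q>(q+1)/(q-1)$ at $p=\infty$ also still needs a proof uniform down to $q\to 1^+$, where, as you note, the two sides are asymptotically equal.) Your route (b), ``estimate the two Beta integrals directly,'' is what the paper actually does, and it is not a one-line matter: the paper needs the bound $B\bigl(\frac{1}{p},1+\frac{q-1}{p}\bigr)<p$, the reflection identity $B(x,y)B(x+y,1-y)=\frac{\pi}{x\sin\pi y}$ applied with $x=q/p$, $y=1/p$, the infinite-product bound $\frac{1}{p}B\bigl(\frac{1}{p},\frac{1}{p}\bigr)<\frac{2p(p+2)}{(p+1)^2}$, a lower bound of the form $2^{q-1}\frac{q-1}{q}\frac{\pi_q^q}{\pi_p^{q-1}}>\frac{2^q}{q}$, and finally a three-case analysis in $(q,p)$ (both at most $2$, both at least $2$, and $q<2\le p$, the last requiring a sharper use of $\pi_p\le\pi$). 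None of this quantitative work appears in your proposal, so as written it is a correct setup rather than a proof.
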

\begin{proof}
	The first inequality is trivial because the first eigenvalue $\lambda_1(q)$ is simple and $\varphi_p \neq \varphi_q$ 
	(see \cite{KTT} or Lemma~\ref{IV}). 
	Let us prove by direct calculations that 
	$$
	\frac{\|\varphi_p'\|^q_q}{\|\varphi_p\|^q_q} < \lambda_2(q)
	$$ 
	for $q < p$.
	Note that
	\begin{equation}\label{eq:beta0}
	\frac{\|\varphi_p'\|^q_q}{\|\varphi_p\|^q_q} = 
	\frac{ \int\limits_{0}^T \left|\sin_p' \left(\frac{\pi_p t}{T}\right) \right|^q \, dt}{\int\limits_{0}^T \left|\sin_p \left(\frac{\pi_p t}{T}\right) \right|^q \, dt} = 
	\frac{\pi_p^q}{T^q}
	\frac{ \int\limits_{0}^T \left|\cos_p \left(\frac{\pi_p t}{T}\right) \right|^q \, dt}{\int\limits_{0}^T \left|\sin_p \left(\frac{\pi_p t}{T}\right) \right|^q \, dt} = 
	\frac{\pi_p^q}{T^q}
	\frac{ \int\limits_{0}^{\pi_p} 
		\left|\cos_p x \right|^q \, dx}{\int\limits_{0}^{\pi_p} \left|\sin_p x \right|^q \, dx} 
	=
	\frac{\pi_p^q}{T^q}
	\frac{ \int\limits_{0}^{\pi_p/2} 
		\cos_p^q x \, dx}{\int\limits_{0}^{\pi_p/2} \sin_p^q x \, dx}.
	\end{equation}
	Using the formulas 
	$$
	\int_{0}^{\pi_p/2} \sin_p^q x \, dx = \frac{1}{p} B\left(\frac{q+1}{p}, \frac{p-1}{p}\right) 
	\quad \text{and} \quad 
	\int_{0}^{\pi_p/2} 
	\cos_p^q x \, dx = 
	\frac{1}{p} B\left(\frac{1}{p}, 1+ \frac{q-1}{p}\right)
	$$
	from \cite[Proposition~3.1]{busheled}, 
	where $B(x,y) := \int_0^1 t^{x-1} (1-t)^{y-1} \, dt$ is the beta function with real $x, y > 0$,  it becomes sufficient to prove that
	\begin{equation}\label{eq:beta1}
	\frac{B\left(\frac{1}{p}, 1+ \frac{q-1}{p}\right)}{B\left(\frac{q+1}{p}, \frac{p-1}{p}\right)}
	< \frac{\lambda_2(q) T^q}{\pi_p^q} \equiv  
	(q-1)\left(\frac{2 \pi_q}{\pi_p}\right)^q.
	\end{equation}
	We will subsequently simplify \eqref{eq:beta1}, to obtain an easier sufficient condition.
	Note that, by definition, 
	$$
	B\left(\frac{1}{p}, 1+ \frac{q-1}{p}\right) = 
	\int_0^1 t^{\frac{1}{p}-1} (1-t)^{\frac{q-1}{p}} \, dt < 
	\int_0^1 t^{\frac{1}{p}-1} \, dt = 
	B\left(\frac{1}{p}, 1\right) = p.
	$$
	Note that $B(x,y) = \frac{\Gamma(x) \Gamma(y)}{\Gamma(x+y)}$, where $\Gamma(y)$ is the gamma function, cf. \cite[Theorem~1.1.4]{andrews}.
	Hence, combining the Euler reflection formula $\Gamma(y) \Gamma(1-y) = \frac{\pi}{\sin \pi y}$ (see, e.g., \cite[p.~9]{andrews}) with the identity $x \Gamma(x) = \Gamma(x+1)$, we obtain
	\begin{equation}\label{beta:ident}
	B(x,y) \cdot B(x+y, 1-y) = \frac{\Gamma(x) \Gamma(y)}{\Gamma(x+y)} \cdot \frac{\Gamma(x+y) \Gamma(1-y)}{\Gamma(x+1)} = \frac{\Gamma(x)}{\Gamma(x+1)}\, \Gamma(y) \Gamma(1-y) = \frac{\pi}{x \sin \pi y}.
	\end{equation}
	Applying \eqref{beta:ident} to 
	$B\left(\frac{q+1}{p}, \frac{p-1}{p}\right)$ with $x=q/p$ and $y=1/p$, we get
	$$
	B\left(\frac{q+1}{p}, \frac{p-1}{p}\right) = 
	\frac{p \pi}{q \sin\left(\frac{\pi}{p}\right)}
	\cdot
	\frac{1}{B\left(\frac{q}{p}, \frac{1}{p}\right)}.
	$$
	Therefore, using the estimate
	$$
	B\left(\frac{q}{p}, \frac{1}{p}\right)=
	\int_0^1 t^{\frac{q}{p}-1} (1-t)^{\frac{1}{p}-1} \, dt < 
	\int_0^1 t^{\frac{1}{p}-1} (1-t)^{\frac{1}{p}-1} \, dt = 
	B\left(\frac{1}{p}, \frac{1}{p} \right),
	$$
	we arrive at
	\begin{equation}\label{eq:beta2}
	\frac{B\left(\frac{1}{p}, 1+ \frac{q-1}{p}\right)}{B\left(\frac{q+1}{p}, \frac{p-1}{p}\right)} <
	\frac{q \sin\left(\frac{\pi}{p}\right)}{\pi}
	B\left(\frac{1}{p}, \frac{1}{p} \right) = 
	\frac{2 q}{p \pi_p} B\left(\frac{1}{p}, \frac{1}{p} \right).
	\end{equation}
	
	Thus, comparing the right-hand sides of \eqref{eq:beta1} and  \eqref{eq:beta2}, we get the following sufficient condition for the assertion of the lemma:
	\begin{equation}\label{eq:beta3}
	 \frac{1}{p}B\left(\frac{1}{p}, \frac{1}{p} \right)\leq 2^{q-1} \frac{q-1}{q} \frac{\pi_q^q}{\pi_p^{q-1}}.
	\end{equation}
	To prove this inequality, we first obtain an appropriate upper bound for its left-hand side. From \cite[p.~8]{andrews} we know that 
	\begin{equation}\label{eq:beta5}
	\frac{1}{p}B\left(\frac{1}{p}, \frac{1}{p} \right) = 
	\frac{1}{p}\, 2 p \prod_{n=1}^{\infty}\frac{1+\frac{2}{n p}}{\left(1+\frac{1}{n p}\right)^2} 
	<
	2 \frac{1+\frac{2}{p}}{\left(1+\frac{1}{p}\right)^2} 
	= \frac{2 p (p + 2)}{(p+1)^2},
	\end{equation}
	since for all $n \in \mathbb{N}$ there holds
	$$
	\frac{1+\frac{2}{np}}{\left(1+\frac{1}{np}\right)^2} = \frac{1+\frac{2}{np}}{1+\frac{2}{np}+\left(\frac{1}{np}\right)^2} < 1.
	$$ 
	Next, we will get a suitable lower bound for the right-hand side of \eqref{eq:beta3}. 
	Since $\pi_r$ is a decreasing function of $r>1$ (in fact $d\pi_r/dr<0$), we have $\pi_q/\pi_p > 1$ for $q<p$. Hence, 
	\begin{equation}\label{eq:beta6}
	2^{q-1} \frac{q-1}{q} \frac{\pi_q^q}{\pi_p^{q-1}} > 
	2^{q-1} \frac{q-1}{q} \pi_q = 
	\frac{2^{q} \pi (q-1)}{q^2 \sin\left(\frac{\pi}{q}\right)} = 
	\frac{2^{q}}{q} \cdot
	\frac{\frac{\pi}{q}(q-1)}{ \sin\left(\frac{\pi}{q}(q-1)\right)} > \frac{2^{q}}{q},
	\end{equation}
	since $\sin x < x$ for all $x > 0$. 
	
	Let us consider three cases. Assume first that $1<q<p\leq 2$. By a direct analysis, 
	the minimum value of the right-hand side $2^q/q$ of \eqref{eq:beta6} is greater than $16/9$. 
	Since the right-hand side 
	$$
	\frac{2 p (p + 2)}{(p+1)^2}
	$$ 
	of \eqref{eq:beta5} is strictly increasing with respect to $p>1$, it is easy to see that 
	$$
	\frac{2 p (p + 2)}{(p+1)^2} \leq \frac{16}{9} \quad \text{for all} \quad 1<p\leq 2.
	$$
	Combining these facts, we prove that \eqref{eq:beta3} holds for $1 < q < p \leq 2$.
	
	Secondly, assume that $2\leq q<p$. 
	Noting that $2^q/q$ is, in fact, strictly increasing for $q \geq 2$, we obtain
	$$
	\frac{2^q}{q} \geq \left.\frac{2^r}{r}\right|_{r=2} = 2 
	>  \frac{2 p (p + 2)}{(p+1)^2} = 2 \, \frac{p^2+2p}{p^2+2p+1}
	$$ 
	for all $q \geq 2$ and $p>1$. Thus, \eqref{eq:beta5} and \eqref{eq:beta6} yield 	\eqref{eq:beta3} for $2\leq q<p$. 
	
	Finally we assume that $1 < q < 2 \leq p$. Since $\pi_r$ is decreasing, $p \geq 2$ implies that $\pi_p \leq \pi$, and we refine inequality \eqref{eq:beta6} in the following way:
	\begin{equation*}
	2^{q-1} \frac{q-1}{q} \frac{\pi_q^q}{\pi_p^{q-1}} \geq  
	\frac{2^{q-1}}{\pi^{q-1}} \frac{q-1}{q} \frac{2^q \pi^q}{q^q \sin^q\left(\frac{\pi}{q}\right)} \geq 
	\frac{2^{2q-1}}{q^q} \frac{q-1}{q} \frac{\pi}{\sin\left(\frac{\pi}{q}\right)} = 
	\frac{2^{2q-1}}{q^q}
	\cdot
	\frac{\frac{\pi}{q}(q-1)}{ \sin\left(\frac{\pi}{q}(q-1)\right)} > \frac{2^{2q-1}}{q^q}.
	\end{equation*}		
	It is not hard to check that 
	$$
	\frac{2^{2q-1}}{q^q} > 2 > \frac{2 p (p + 2)}{(p+1)^2} \quad  \text{for all} \quad q \in (1,2),
	$$ 
	which again implies \eqref{eq:beta3}.
	
	Therefore, \eqref{eq:beta3} holds for all $1<q<p<+\infty$, which completes the proof.
\end{proof}
	
	If we swap $p$ and $q$ in Lemma~\ref{lem:appendixA}, then an opposite situation occurs.
\begin{lemma} 
	Let $N=1$. Then for any $k \in \mathbb{N}$ there exist $1 < q_0 < p_0$, such that $\frac{\|\varphi_q^\prime\|^p_p}{\|\varphi_q\|^p_p} > \lambda_k(p)$ for all $1 < q < q_0$ and  $p > p_0$.
\end{lemma}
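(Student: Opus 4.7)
The plan is to obtain a pointwise lower bound for $|\varphi_q'|$ on a $q$-independent interval, integrate it, and compare with the explicit formula $\lambda_k(p) = (p-1)(k\pi_p/T)^p$. The essential mechanism is that $\varphi_q(t) = \sin_q(\pi_q t/T)$ has $\|\varphi_q'\|_\infty = \pi_q/T$, and $\pi_q = 2\pi/(q\sin(\pi/q)) \to +\infty$ as $q\to 1^+$, whereas $\pi_p \to 2$ as $p \to +\infty$. Thus there is ample room to dominate $\lambda_k(p)$ once $q$ is close to $1$ and $p$ is sufficiently large; the real challenge is to make estimates \emph{uniform} in $q$ as $q \to 1^+$.

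First I will establish the $q$-uniform pointwise estimate $\cos_q(x) \geq 1/2$ on $[0,1/2]$. The defining identity $x = \int_0^{\sin_q(x)} (1-s^q)^{-1/q}\,ds$ has integrand $\geq 1$, so $\sin_q(x) \leq x$ on $[0, \pi_q/2]$; combined with the Pythagorean identity $\cos_q(x) = (1-\sin_q^q(x))^{1/q}$, this gives
\begin{equation*}
\cos_q(x) \geq (1-x^q)^{1/q} \geq (1-(1/2)^q)^{1/q} \geq (1/2)^{1/q} \geq 1/2
\end{equation*}
for every $x \in [0,1/2]$ and $q \geq 1$ (the inclusion $[0,1/2] \subset [0,\pi_q/2]$ is valid since $\pi_q > 2$ for all $q > 1$). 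Translating via $\varphi_q'(t) = (\pi_q/T)\cos_q(\pi_q t/T)$ yields $|\varphi_q'(t)| \geq \pi_q/(2T)$ on $[0, T/(2\pi_q)]$. Integrating and using $\|\varphi_q\|_\infty = 1$ gives
\begin{equation*}
\|\varphi_q'\|_p^p \geq \left(\frac{\pi_q}{2T}\right)^{\!p}\frac{T}{2\pi_q} = \frac{\pi_q^{p-1}}{2^{p+1} T^{p-1}}, \qquad \|\varphi_q\|_p^p \leq T,
\end{equation*}
so the desired inequality reduces to
\begin{equation*}
\left(\frac{\pi_q}{k\pi_p}\right)^{\!p-1} > 2^{p+1}(p-1)\,k\pi_p.
\end{equation*}

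Since $\pi_r$ is strictly decreasing in $r>1$ with $\pi_r \to +\infty$ as $r\to 1^+$ and $\pi_r \to 2$ as $r\to +\infty$, I choose $q_0 \in (1,2)$ so small that $\pi_q \geq 8k\pi$ for every $q \in (1, q_0)$. Combined with $\pi_p \leq \pi_2 = \pi$ for $p \geq 2$, this gives $\pi_q/(k\pi_p) \geq 8$, so the left-hand side of the reduced inequality is at least $8^{p-1}$ while the right-hand side is at most $2^{p+1}(p-1)k\pi$. After dividing, it suffices to show $4^{p-2} > (p-1)k\pi$, which holds for all $p > p_0$ once $p_0$ is chosen sufficiently large (and automatically $p_0 > q_0$ since $q_0 < 2$). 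The main subtle point the argument circumvents is the behaviour as $q \to 1^+$: extracting a $q$-independent interval on which $\cos_q$ is bounded below by an absolute constant avoids the degenerate beta-function asymptotics, where a factor $\Gamma(1-1/q)^{-1}$ in the exact $L^p$-ratio would otherwise vanish and obstruct uniform control.
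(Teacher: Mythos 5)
Your proof is correct, and it takes a genuinely different route from the paper's. The paper works with the exact beta-function representations $\int_0^{\pi_q/2}\cos_q^p\,dx=\tfrac1q B\bigl(\tfrac1q,1+\tfrac{p-1}{q}\bigr)$ and $\int_0^{\pi_q/2}\sin_q^p\,dx=\tfrac1q B\bigl(\tfrac{p+1}{q},\tfrac{q-1}{q}\bigr)$, bounds each beta function by elementary integral comparisons, and reduces the claim to the trigonometric inequality $\sin(\pi/p)>4\pi k(q-1)^{1/2}/p$, which yields the explicit threshold $q_0=1+\tfrac{1}{(4k)^2}$. You instead bypass the special-function machinery entirely: the uniform pointwise bound $\cos_q\geq 1/2$ on $[0,1/2]$ (which I checked --- $\sin_q(x)\leq x$ and $(1-(1/2)^q)^{1/q}\geq(1/2)^{1/q}\geq 1/2$ for $q>1$ are all correct, and $[0,1/2]\subset[0,\pi_q/2]$ since $\pi_q>2$) gives $\|\varphi_q'\|_p^p\geq\pi_q^{p-1}/(2^{p+1}T^{p-1})$, while $\|\varphi_q\|_p^p\leq T$ crudely controls the denominator; the powers of $T$ cancel against $\lambda_k(p)=(p-1)(k\pi_p/T)^p$ and the conclusion reduces to $4^{p-2}>(p-1)k\pi$ once $\pi_q\geq 8k\pi$ and $p\geq 2$, which is exactly right. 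Both arguments ultimately run on the same engine --- $\pi_q\to+\infty$ as $q\to 1^+$ while $\pi_p$ stays bounded --- but yours is shorter and more elementary, at the modest cost of an implicit (though easily explicated, via $q\sin(\pi/q)\leq\pi(q-1)$) choice of $q_0$, whereas the paper's beta-function route produces a clean explicit $q_0$ and reuses the identities already set up for Lemma 5.3 of the appendix.
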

\begin{proof}
	The case $k=1$ is obvious. 
	Let $k \geq 2$. Similar to \eqref{eq:beta0} and \eqref{eq:beta1}, it is sufficient to show that
	\begin{equation}\label{eq:gamma1}
	\frac{B\left(\frac{1}{q}, 1+ \frac{p-1}{q}\right)}{B\left(\frac{p+1}{q}, \frac{q-1}{q}\right)}
	>
	(p-1)\left(\frac{k \pi_p}{\pi_q}\right)^p.
	\end{equation}
	Note first that 
	$$
	B\left(\frac{1}{q}, 1+ \frac{p-1}{q}\right) = 
	\int_0^1 t^{\frac{1}{q}-1} (1-t)^{\frac{p-1}{q}} \, dt 
	>
	\int_0^1 (1-t)^{\frac{p-1}{q}} \, dt = 
	B\left(1, 1+ \frac{p-1}{q}\right) = \frac{q}{p+q-1},
	$$
	and for $q<p$ there holds
	$$
	B\left(\frac{p+1}{q}, \frac{q-1}{q}\right) = 
	\int_0^1 t^{\frac{p+1}{q}-1} (1-t)^{-\frac{1}{q}} \, dt 
	<
	\int_0^1 (1-t)^{-\frac{1}{q}} \, dt = 
	B\left(1, \frac{q-1}{q}\right) = \frac{q}{q-1}.
	$$
	Therefore, \eqref{eq:gamma1} can be simplified as
	$$
	\left(\frac{q-1}{p+q-1}\right)^{\frac{1}{p}} 
	> \frac{kq(p-1)^\frac{1}{p}}{p} \,  \frac{\sin\left(\frac{\pi}{q}\right)}{\sin\left(\frac{\pi}{p}\right)}.
	$$
	Note that $\sin\left(\frac{\pi}{q}\right) = \sin\left(\frac{\pi}{q}(q-1)\right) < \frac{\pi}{q}(q-1)$. Hence, using the estimates
	$(p+q-1)^{1/p} < 2 p^{1/p}$ and
	$(p-1)^{1/p} < p^{1/p}$, we arrive at the following sufficient inequality:
	$$
	\sin\left(\frac{\pi}{p}\right) > 
	\frac{2 \pi k (q-1)^{\frac{p-1}{p}}}{p^{\frac{p-2}{p}}} = 
	p^{\frac{2}{p}} \cdot \frac{2\pi k(q-1)^{\frac{p-1}{p}}}{p}. 
	$$
	At the same time,  $(q-1)^{\frac{p-1}{p}} \leq (q-1)^{\frac{1}{2}}$ for $1<q<2<p$, and, choosing $p_1>2$ large enough, we obtain $2 \geq p^{2/p}$ for any $p \geq p_1$. Therefore, to prove \eqref{eq:gamma1}
	it is sufficient to show that
	\begin{equation}\label{eq:betal}
	\frac{4 \pi k (q-1)^{\frac{1}{2}}}{p} <
	\sin\left(\frac{\pi}{p}\right) = \frac{\pi}{p} + o\left(\frac{\pi}{p}\right).
	\end{equation}
	However, \eqref{eq:betal} is obviously satisfied for any $q < 1+ \frac{1}{(4k)^2}$ and sufficiently large $p > p_1$.
\end{proof}

\section*{Appendix~B: Sketch of the proof of Theorem~\ref{thm:1}}\label{B}
\addcontentsline{toc}{section}{\protect\numberline{}Appendix~B}

Let us consider a map $T_\lambda:\W\to (\W)^*$ defined for $\lambda>0$ by
\begin{equation*}
	\left<T_\lambda(u),v\right>=
	\int_\Omega \left(|\nabla u|^{p-2}+|\nabla u|^{q-2}\right)\nabla u \nabla v \, dx
	+ \lambda \int_\Omega \left(|u|^{p-2}+|u|^{q-2}\right) u v\, dx
\end{equation*}
for $u, v \in \W$. 
The following properties of $T_\lambda$ can be proved in much the same way as in the proof of \cite[Propositions~9, 10]{MT2}.
\begin{lemma}\label{T} 
	$T_\lambda$ is invertible and  $T_\lambda^{-1}\colon (\W)^* \to \W$ is continuous. 
	Moreover, if $1<p\le N$ and $r>N/p$, then there exists a constant 
	$D_0>0$ such that  for all $u \in L^r(\Omega)$ we have
	$$
	\|T_\lambda^{-1}(u)\|_\infty \leq D_0\|u\|_r^{1/(p-1)}.
	$$
\end{lemma}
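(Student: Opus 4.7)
The three assertions — bijectivity of $T_\lambda$, continuity of $T_\lambda^{-1}$, and the $L^\infty$ bound — I would establish separately.

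For the bijectivity of $T_\lambda$ I would verify the hypotheses of the Browder--Minty theorem on the reflexive Banach space $\W$. Continuity and boundedness of $T_\lambda$ follow from Hölder's inequality applied to the four integrands in its definition. Strict monotonicity comes from the elementary inequality $(|\xi|^{r-2}\xi-|\eta|^{r-2}\eta)\cdot(\xi-\eta)>0$ for $\xi\ne\eta$ and $r>1$, applied to both $r=p$ and $r=q$ for the gradient terms, and pointwise (in place of the dot product) for the lower-order terms. Coercivity is immediate from $\langle T_\lambda(u),u\rangle\ge \|\nabla u\|_p^p=\|u\|_{W_0^{1,p}}^p$. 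Hence $T_\lambda$ is a homeomorphism onto $(\W)^*$, and the first claim follows.

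For the continuity of $T_\lambda^{-1}$, given $f_n\to f$ in $(\W)^*$, set $u_n:=T_\lambda^{-1}(f_n)$. Coercivity yields boundedness of $\{u_n\}$ in $\W$, so a subsequence converges weakly to some $u_0\in\W$ (and strongly in $L^p$ and $L^q$). Then
$$\langle T_\lambda(u_n)-T_\lambda(u_0), u_n-u_0\rangle=\langle f_n-T_\lambda(u_0), u_n-u_0\rangle \longrightarrow 0,$$
and the $(S_+)$-property of $-\Delta_p-\Delta_q$ recalled in Remark~\ref{remark:S_+}, together with the analogous monotonicity estimates for the zero-order $\lambda(|u|^{p-2}u+|u|^{q-2}u)$ terms, upgrades the weak convergence to $u_n\to u_0$ in $\W$. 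Passing to the limit in $T_\lambda(u_n)=f_n$ identifies $u_0=T_\lambda^{-1}(f)$; uniqueness of the weak limit forces the full sequence to converge.

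For the $L^\infty$ bound, I would perform a Moser iteration for $u=T_\lambda^{-1}(f)$ with $f\in L^r(\Omega)$, $r>N/p$. Test the equation $T_\lambda(u)=f$ against $|u|^{(k-1)p}u$ for $k\ge 1$, legitimized via truncations $\min\{|u|,M\}\operatorname{sgn} u$ and the limit $M\to\infty$. The $q$-Laplace and pointwise $\lambda$-contributions on the left are nonnegative and may be discarded, leaving
$$\frac{p^{\,p}}{(p(k-1)+1)^{p-1}}\int_\Omega\bigl|\nabla(|u|^{k-1}u)\bigr|^p\,dx\ \le\ \int_\Omega |f|\,|u|^{p(k-1)+1}\,dx.$$
Applying the Sobolev embedding $W_0^{1,p}\hookrightarrow L^{p^*}(\Omega)$ on the left (or a Trudinger-type embedding in the borderline case $p=N$) and Hölder's inequality with exponent $r$ for $f$ on the right produces a recursion controlling $\|u\|_{kp^*}$ in terms of $\|u\|_{(p(k-1)+1)r'}$ and $\|f\|_r^{1/p}$. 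Iterating along a geometric sequence of exponents (the assumption $r>N/p$ is precisely what yields a strict geometric gain $p^*/(pr')>1$ at each step, ensuring that the iteration converges) and tracking the product of prefactors gives $u\in L^\infty(\Omega)$ with an estimate of the form $\|u\|_\infty\le D_0\|f\|_r^{1/(p-1)}$; the exponent $1/(p-1)$ is dictated by the $(p-1)$-homogeneity of the leading part of $T_\lambda$. The main technical obstacle is the careful bookkeeping of the Moser constants so as to obtain both a summable (after a logarithm) product and the correct exponent $1/(p-1)$ on $\|f\|_r$; once the first iteration step is set up, the remainder is a standard geometric-series argument as in \cite[Propositions~9, 10]{MT2}.
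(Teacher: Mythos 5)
Your plan is correct and reconstructs essentially the argument the paper relies on: the paper gives no proof of Lemma~\ref{T} but defers to \cite[Propositions~9, 10]{MT2}, whose proof is precisely this Browder--Minty/monotone-operator scheme for invertibility, the $(S_+)$-type argument for continuity of $T_\lambda^{-1}$, and a Moser iteration combined with the energy estimate $\|\nabla u\|_p \leq C\|f\|_r^{1/(p-1)}$ (which is what produces the exponent $1/(p-1)$) for the $L^\infty$ bound. The only slip is cosmetic: testing with $|u|^{(k-1)p}u$ gives the prefactor $\frac{p(k-1)+1}{k^{p}}$ rather than your $\frac{p^{p}}{(p(k-1)+1)^{p-1}}$, which overestimates the left-hand side for $p>1$; since both are of order $k^{1-p}$, the iteration and the final constant-tracking are unaffected.
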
 

Let us define $\psi(u):=|u|^{p-2}u+|u|^{q-2}u$ and a map $B_\lambda\colon \W \to \W$ by
\begin{equation*}
	B_\lambda(u):=T_\lambda^{-1}(h(\cdot,u) + \lambda \psi(u)) 
\end{equation*}
for $u\in \W$ and $\lambda>0$. 
According to Lemma~\ref{T} and the assumption \eqref{h}, we see that 
$B_\lambda$ is well-defined and continuous. 
Moreover, critical points of the energy function $J$ given by \eqref{eq:J} correspond to fixed points of $B_\lambda$, see \cite[Remark~12]{MT2}. 
Throughout this section, $K:=\{u \in  \W:~J'(u) = 0\}$ is the set of critical points of $J$, and, to shorten notation, we write $\|u\|$ instead of $\|\nabla u\|_p$ for $u \in \W$. 

By the standard calculations, we have the following facts (cf. \cite[Lemmas~3.7 and 3.8]{bartschliu} for details). 

\begin{lemma}\label{B:estimate} 
	Let $\lambda>0$. 
	Then there exist constants  $d_i=d_i(\lambda)>0$,  $i=1,2,\dots,6$, such that for all $u\in \W$ the following assertions hold:
	\begin{itemize} 
		\item[{\rm (i)}] 
		$\left< J'(u), u-B_\lambda(u) \right> \ge 
		d_1\|u-B_\lambda(u)\|^2\left((\|u\|+\|B_\lambda(u)\|)^{p-2}+(\|u\|+\|B_\lambda(u)\|)^{q-2}\right)$  for $1 < q < p \leq 2$; 
		\item[{\rm (ii)}] 
		$\left< J'(u), u-B_\lambda(u) \right> \ge 
		d_2\left(\|u-B_\lambda(u)\|^p+\|u-B_\lambda(u)\|^q\right)$ 
		for $2 \leq q < p$; 
		\item[{\rm (iii)}] $\left< J'(u), u-B_\lambda(u) \right> \ge 
		d_3\|u-B_\lambda(u)\|^2(\|u\|+\|B_\lambda(u)\|)^{q-2}
		+d_3\|u-B_\lambda(u)\|^p$ 
		for $1 < q \leq 2 \leq p$; 
		\item[{\rm (iv)}] 
		$\|J'(u)\|_{(\W)^*}\le 
		d_4\left(\|u-B_\lambda(u)\|^{p-1}+\|u-B_\lambda(u)\|^{q-1}\right)$
		for $1<q<p\le 2$; 
		\item[{\rm (v)}] 
		$\|J'(u)\|_{(\W)^*}\le d_5\|u-B_\lambda(u)\|
		\left((\|u\|+\|B_\lambda(u)\|)^{p-2}+(\|u\|+\|B_\lambda(u)\|)^{q-2}\right)$ 
		for $2 \leq q < p$;
		\item[{\rm (vi)}] $\|J'(u)\|_{(\W)^*}\le d_6\|u-B_\lambda(u)\|
		(\|u\|+\|B_\lambda(u)\|)^{p-2}+d_6\|u-B_\lambda(u)\|^{q-1}$ 
		for $1 < q \leq 2 \le p$. 
	\end{itemize}
\end{lemma}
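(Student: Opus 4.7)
The proof rests on three standard ingredients: (a) the representation of $J'(u)$ through the operator $T_\lambda$, (b) the classical Simon-type monotonicity inequalities for the vector field $z\mapsto |z|^{r-2}z$, and (c) a reverse Hölder trick converting pointwise weighted integrals into norm expressions. First, by the definition of $B_\lambda$ one has $T_\lambda(B_\lambda(u)) = h(\cdot,u) + \lambda\psi(u)$, so that
$$
\langle J'(u), v\rangle = \langle T_\lambda(u) - T_\lambda(B_\lambda(u)),\, v\rangle \quad \text{for every } v\in W_0^{1,p}.
$$
Testing with $v = u - B_\lambda(u)$ decomposes $\langle J'(u), u-B_\lambda(u)\rangle$ into the sum of four nonnegative integrals: the gradient $p$- and $q$-Laplacian-type quadratic forms together with the analogous $\lambda$-weighted zero-order integrals. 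Each is treated separately by the classical pointwise Simon inequality
$$
\langle |x|^{r-2}x - |y|^{r-2}y,\,x-y\rangle \;\geq\;
\begin{cases}
c_r\,|x-y|^r, & r\geq 2,\\[0.3em]
c_r\,|x-y|^2\,(|x|+|y|)^{r-2}, & 1<r\leq 2,
\end{cases}
$$
applied with $r=p$ and $r=q$, both to gradients and to pointwise values.

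In the degenerate regime $r\geq 2$ the pointwise bound integrates at once to $c_r\|\nabla w\|_r^r$ (resp.\ $c_r\|w\|_r^r$ for the zero-order term), where $w := u - B_\lambda(u)$; this yields case (ii). In the singular regime $1<r\leq 2$ the integrand has the form $|\nabla w|^2(|\nabla u|+|\nabla B_\lambda(u)|)^{r-2}$, and I would factor
$$
|\nabla w|^r = \bigl[|\nabla w|^2(|\nabla u|+|\nabla B_\lambda(u)|)^{r-2}\bigr]^{r/2}\bigl[(|\nabla u|+|\nabla B_\lambda(u)|)^r\bigr]^{(2-r)/2},
$$
apply Hölder's inequality with conjugate exponents $2/r$ and $2/(2-r)$, both $\geq 1$ since $r\leq 2$, to obtain the reverse Hölder bound
$$
\int_\Omega |\nabla w|^2(|\nabla u|+|\nabla B_\lambda(u)|)^{r-2}\,dx \;\geq\; \frac{\|\nabla w\|_r^2}{(\|\nabla u\|_r + \|\nabla B_\lambda(u)\|_r)^{2-r}}.
$$
Together with the continuous embedding $W_0^{1,p}\hookrightarrow W_0^{1,q}$, valid since $\Omega$ is bounded and $q<p$, this yields the right-hand sides of (i) and (iii) after rewriting every quantity in the $\|\cdot\|$-norm, using the sign of $r-2$ to preserve the direction of each step.

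For the upper bounds (iv)--(vi) I would proceed by duality: starting from the same representation and taking the supremum over $v\in W_0^{1,p}$ with $\|v\|\leq 1$, apply the dual Simon-type pointwise estimates
$$
\bigl||x|^{r-2}x - |y|^{r-2}y\bigr| \;\leq\;
\begin{cases}
C_r\,|x-y|\,(|x|+|y|)^{r-2}, & r\geq 2,\\[0.3em]
C_r\,|x-y|^{r-1}, & 1<r\leq 2,
\end{cases}
$$
to each of the four integrands and close the estimates with Hölder's inequality (to absorb the test function $v$), splitting into the three regime combinations that yield (iv), (v) and (vi).

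The main technical obstacle is the norm-matching step: the $q$-Laplacian contribution naturally produces quantities in the $L^q$-gradient norm, while the target is phrased through the $L^p$-gradient norm $\|\cdot\|$. Reconciling the two via the embedding $W_0^{1,p}\hookrightarrow W_0^{1,q}$ must be done in a direction compatible with the sign of each exponent $r-2$, so that no inequality flips in the process. This bookkeeping is precisely what distinguishes cases (i), (ii) and (iii), and it is what gives rise to the particular combinations of $\|u-B_\lambda(u)\|$-powers and $(\|u\|+\|B_\lambda(u)\|)^{r-2}$-weights on each right-hand side.
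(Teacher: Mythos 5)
Your overall framework is the right one, and it is essentially what the paper has in mind: the paper gives no proof of this lemma, deferring to ``standard calculations'' and to Lemmas~3.7--3.8 of Bartsch--Liu. Your starting identity $\langle J'(u),v\rangle=\langle T_\lambda(u)-T_\lambda(B_\lambda(u)),v\rangle$ is correct (it follows at once from $T_\lambda(B_\lambda(u))=h(\cdot,u)+\lambda\psi(u)$), the two Simon-type pointwise inequalities and their dual versions are correctly stated, and the reverse-H\"older factorization for the singular regime is the standard device. The upper bounds (iv)--(vi) do go through exactly as you sketch, because there every appeal to the embedding $W_0^{1,p}\hookrightarrow W_0^{1,q}$ (i.e.\ $\|\nabla v\|_q\le C\|\nabla v\|_p$) occurs inside an \emph{upper} estimate, so the inequality points the right way.

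The gap is in the lower bounds (i)--(iii), and it sits precisely at the step you defer as ``bookkeeping.'' The $q$-Laplacian monotonicity term yields, after your reverse-H\"older step, a quantity expressed in $W^{1,q}$-norms: namely $c\|\nabla w\|_q^q$ when $q\ge2$, and $c\,\|\nabla w\|_q^2(\|\nabla u\|_q+\|\nabla B_\lambda(u)\|_q)^{q-2}$ when $q\le2$, where $w=u-B_\lambda(u)$. To turn these into the stated expressions in the norm $\|\cdot\|=\|\nabla\cdot\|_p$ you would need a bound of the form $\|\nabla w\|_q\ge c\,\|\nabla w\|_p$, and this is exactly the direction the embedding does \emph{not} provide on $W_0^{1,p}(\Omega)$ (the ratio $\|\nabla w\|_q/\|\nabla w\|_p$ can be made arbitrarily small). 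The negative exponent $q-2$ in the weight does flip correctly under the embedding, but the factor $\|\nabla w\|_q^2$ (resp.\ $\|\nabla w\|_q^q$) appears to a \emph{positive} power in a \emph{lower} bound, so no choice of sign saves it. Consequently your argument establishes only the summands coming from the $p$-part --- $\|w\|^2(\|u\|+\|B_\lambda(u)\|)^{p-2}$ in (i), $\|w\|^p$ in (ii) and (iii) --- plus the $q$-analogues measured in $W^{1,q}$-norms; it does not deliver the second summands of (i)--(iii) as written. (Those $p$-part summands are, as it happens, the only ones actually invoked in the descending-flow argument of Appendix~B, where $\|u\|$ and $\|B_\lambda(u)\|$ are a priori bounded by $R$.) To close the proof as stated you must either retain the $q$-contributions in the $W^{1,q}$-scale or produce a genuinely different argument for the cross-exponent conversion; asserting that the embedding handles it is not enough.
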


Then, similar arguments as in \cite[Lemma~17]{MT2} (see also \cite[Lemma 4.1]{bartschliu}) can be applied to prove the following result on the existence of a locally Lipschitz continuous pseudo-gradient vector field in order to produce an invariant descending flow with respect to the positive and negative cones $\pm P$ defined by \eqref{eq:P}.
\begin{lemma}\label{pgvf} 
	Let $\lambda > \lambda_0$, where $\lambda_0 > 0$ is given by the assumption $(A1)$ of Theorem~\ref{thm:1}. 
	Then, there exists a locally Lipschitz continuous operator $V_\lambda\colon \C\setminus K \to \C$ such that the following hold:
	\begin{itemize} 
		\item[{\rm (i)}]
		For any $u \in C^1_0(\overline{\Omega})\setminus K$ we have
		\begin{itemize} 
			\item[]\hspace*{-8mm}  
			$\left< J'(u), u-V_\lambda(u) \right> \ge 
			\frac{d_1}{2}\|u-B_\lambda(u)\|^2\left\{(\|u\|+\|B_\lambda(u)\|)^{p-2}+(\|u\|+\|B_\lambda(u)\|)^{q-2}\right\}$ 
			for $1 < q < p \leq 2$; 
			\item[]\hspace*{-8mm} 
			$\left< J'(u), u-V_\lambda(u) \right> \ge 
			\frac{d_2}{2}\left(\|u-B_\lambda(u)\|^p+\|u-B_\lambda(u)\|^q\right)$ 
			for $2 \leq q < p$; 
			\item[]\hspace*{-8mm} 
			$\left< J'(u), u-V_\lambda(u) \right> \ge 
			\dfrac{d_3}{2}\|u-B_\lambda(u)\|^2(\|u\|+\|B_\lambda(u)\|)^{q-2}
			+\dfrac{d_3}{2}\|u-B_\lambda(u)\|^p$ 
			for $1 < q \leq 2 \leq p$; 
			\item[]\hspace*{-8mm} 
			$\frac{1}{2}\|u-B_\lambda(u)\|\le \|u-V_\lambda(u)\| \le 
			2\|u-B_\lambda(u)\|$. 
		\end{itemize} 
		Here $d_1$, $d_2$, and $d_3$ are the positive constants from Lemma~\ref{B:estimate}.
		\item[{\rm (ii)}] $V_\lambda(u)\in \pm\, {\rm int}\,P$ 
		for every $u \in \pm\,P \setminus K$, 
		respectively.
		\item[{\rm (iii)}] 
		Let $p^*:= \frac{Np}{N-p}$ for $N>p$, and $p^* := p+1$ otherwise. 		
		Set $r_0:=p^*$ and define a sequence $\{r_n\}_{n \in \mathbb{N}}$ inductively as follows: 
		$$
		r_{n+1}:=p^*r_n/p=(p^*/p)^{n+1}p^*.
		$$
		Then, for any $n\in\mathbb{N}$ 
		there exists a constant $C_n^* > 0$ such that 
		$$
		\|V_\lambda(u)\|_{r_{n+1}} \leq C_{n+1}^* (2+|\Omega|+\|u\|_{r_n}) 
		\quad \text{for all} \quad 
		u \in \C \setminus K; 
		$$
		\item[{\rm (iv)}] If $N \geq p$ and $r>\max\{N/p,1/(p-1)\}$, then 
		there exists a constant $D_1 > 0$ such that 
		$$
		\|V_\lambda(u)\|_\infty \leq D_1(\|u\|_{r(p-1)}+2+|\Omega|) \quad \text{for all} \quad
		u \in \C \setminus K; 
		$$
		\item[{\rm (v)}] There exists a constant $D_2 > 0$ such that 
		$$
		\|V_\lambda(u)\|_\infty \leq D_2(2+\|u\|_\infty) 
		\quad \text{for all} \quad 
		u\in \C\setminus K; 
		$$
		\item[{\rm (vi)}] For every $R>0$ there exist $\gamma\in(0,1)$ 
		and $M>0$ such that 
		$\|V_\lambda(u)\|_{C^{1,\gamma}_0(\overline{\Omega})} \leq M$ 
		for all $u \in \C \setminus K$ 
		with $\|u\|_\infty \leq R$. 
	\end{itemize} 
\end{lemma}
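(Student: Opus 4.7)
The plan is to define $V_\lambda$ as a convex partition-of-unity approximation of the continuous (but not a priori locally Lipschitz) map $B_\lambda$, set up so that the slack factors $1/2$ and $2$ appearing in (i) absorb the approximation error, while the stronger $\C$-topology is what makes the cone-invariance in (ii) robust under perturbation.

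First I would verify that $B_\lambda$ itself already maps $\pm P\setminus K$ into $\pm\,\mathrm{int}\,P$. Since $\lambda>\lambda_0$, assumption $(A1)$ forces $h(x,s)+\lambda(|s|^{q-2}s+|s|^{p-2}s)\ge 0$ whenever $s\ge 0$ (at points where $u(x)>0$, dividing the assumption by $u$; the inequality extends to $u(x)=0$ by continuity). So $T_\lambda B_\lambda(u)=h(\cdot,u)+\lambda\psi(u)\ge 0$ for $u\in P$; the weak comparison principle for the strictly monotone, coercive operator $T_\lambda$ yields $B_\lambda(u)\ge 0$, and the strong maximum principle (applied as in \cite{BobkovTanaka2015}) upgrades this to $B_\lambda(u)\in\mathrm{int}\,P$, the degenerate case $h(\cdot,u)+\lambda\psi(u)\equiv 0$ being excluded by $u\notin K$. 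The negative cone is symmetric.

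Next, for each $u_0\in\C\setminus K$ I would use continuity of $B_\lambda$, both as a map $\W\to\W$ and as a map $\C\to\C$ (the latter being a consequence of the Lieberman boundary regularity recalled in the Introduction's Remark applied to $T_\lambda B_\lambda(v)=h(\cdot,v)+\lambda\psi(v)$), to select a neighborhood $N_{u_0}\subset\C\setminus K$ on which: (a) the $\W$-distance $\|B_\lambda(v)-B_\lambda(u_0)\|$ is small enough that the inequalities of Lemma~\ref{B:estimate} at $v$, with $B_\lambda(v)$ replaced by $B_\lambda(u_0)$, retain at least half of their right-hand side; (b) $\tfrac12\|v-B_\lambda(v)\|\le\|v-B_\lambda(u_0)\|\le 2\|v-B_\lambda(v)\|$; (c) if $u_0\in\pm P$, then $B_\lambda(u_0)$ lies in a $\C$-ball contained in $\pm\,\mathrm{int}\,P$. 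Paracompactness of $\C\setminus K$ (as a metric space) yields a locally finite refinement $\{N_i\}$ with representatives $u_i\in N_i$ and a locally Lipschitz partition of unity $\{\rho_i\}$, and I set
\begin{equation*}
V_\lambda(u):=\sum_i \rho_i(u)\,B_\lambda(u_i),\qquad u\in\C\setminus K.
\end{equation*}
Local finiteness, together with the fact that each $B_\lambda(u_i)$ is a fixed element of $\C$, makes $V_\lambda$ locally Lipschitz into $\C$. Properties (i) and (ii) then follow by convex-combination bookkeeping: $\langle J'(u),u-V_\lambda(u)\rangle=\sum_i\rho_i(u)\langle J'(u),u-B_\lambda(u_i)\rangle$ and each summand is bounded below by Lemma~\ref{B:estimate} combined with (a); the two-sided bound in the last line of (i) follows from (b); and (ii) uses convexity of the open cones $\pm\,\mathrm{int}\,P$ combined with (c).

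For (iii)--(v), the plan is to bootstrap from Lemma~\ref{T}: combining $\|B_\lambda(v)\|_\infty\le D_0\|h(\cdot,v)+\lambda\psi(v)\|_r^{1/(p-1)}$ with the growth bound \eqref{h} and successive Sobolev embeddings $W_0^{1,p}\hookrightarrow L^{p^*}$ yields the Moser-type iteration producing the $L^{r_n}\to L^{r_{n+1}}$ and $L^{r(p-1)}\to L^\infty$ estimates for $B_\lambda$; each $u_i$ can be taken $\C$-close to $u$, so $\|u_i\|_{r_n}$ and $\|u_i\|_\infty$ are controlled by the corresponding norms of $u$ up to an additive $1+|\Omega|$, and the convex combination $V_\lambda(u)$ inherits the bounds with adjusted constants. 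Finally (vi) is an application of the Lieberman estimate once more: given (v) and a bound $\|u\|_\infty\le R$, each $B_\lambda(u_i)$ with $u_i$ near $u$ solves a $(p,q)$-Laplace equation with a uniformly $L^\infty$-bounded right-hand side, hence lies in a single ball of $C^{1,\gamma}_0(\overline{\Omega})$, and the locally finite convex combination $V_\lambda(u)$ stays in that ball. The main obstacle is the simultaneous reconciliation of two topologies: (ii) demands $\C$-neighborhoods (to guarantee that a perturbation of $B_\lambda(u_0)\in\mathrm{int}\,P$ remains in the open cone), while Lemma~\ref{B:estimate} is stated in $\W$; bridging the two requires the $\C\to\C$ continuity of $B_\lambda$, which is where the Lieberman regularity, applied uniformly to the images of $B_\lambda$, is essential.
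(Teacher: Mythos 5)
Your proposal is correct and takes essentially the same route as the paper, which proves this lemma only by deferring to \cite[Lemma~17]{MT2} and \cite[Lemma~4.1]{bartschliu}: those arguments are precisely your construction, namely a locally Lipschitz partition of unity subordinate to a locally finite refinement of $\C$-neighborhoods on which $B_\lambda$ varies little (in $\W$ for the estimates of Lemma~\ref{B:estimate}, in $\C$ for the cone condition), followed by the convex combination $V_\lambda(u)=\sum_i\rho_i(u)B_\lambda(u_i)$, with the factors $1/2$ and $2$ absorbing the approximation error and the bootstrap from Lemma~\ref{T} giving (iii)--(vi). One minor correction: in your maximum-principle step the degenerate case $h(\cdot,u)+\lambda\psi(u)\equiv 0$ for $u\in P$ is ruled out by $\lambda>\lambda_0$ together with $u>0$ in $\Omega$ (which forces $h(x,u)+\lambda\psi(u)\geq(\lambda-\lambda_0)(u^{q-1}+u^{p-1})>0$), not by $u\notin K$.
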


Now, we will give the proof of Theorem~\ref{thm:1}. 

\begin{proof*}{Theorem~\ref{thm:1}} 
	Note first that the boundary of $\pm P$ in $\C$ does not intersect with $K \setminus \{0\}$, since any nonnegative (resp. nonpositive) 
	and nontrivial solution of corresponding equation is strictly positive (resp. negative) in $\Omega$ and $\partial u/\partial \nu<0$ (resp. $>0$) on $\partial\Omega$ under the assumption $(A1)$ of the theorem, due to the strong maximum principle and boundary point lemma (see \cite[Theorem~5.3.1 and Theorem~5.5.1]{PS}). 
	
	Take $\lambda > \lambda_0$ and let $V_\lambda$ be a locally Lipschitz continuous operator given by Lemma~\ref{pgvf}. 
	Consider the following initial value problem in 
	$C^{1}_0(\overline{\Omega})$: 
	$$
	\left\{ 
	\begin{aligned}
	\dfrac{d\eta}{dt}(t) &= -\eta(t)+V_\lambda(\eta(t)),\\
	\eta(0) &= u.
	\end{aligned}
	\right. 
	$$
	Denote by $\eta(t,u) \in C^{1}_0(\overline{\Omega})$ its unique solution on the right maximal interval $[0,\tau(u))$.
	According to the assertion (ii) of Lemma~\ref{pgvf}, $\eta(t,u)$ is the  \textit{invariant descending flow} with respect to the positive cone $P$ and the negative cone $-P$, 
	namely, $\eta(t,u)\in \pm {\rm int}\, P$ for all $0<t<\tau(u)$ 
	provided $u\in \pm P\setminus K$ (see \cite[Lemma~3.2]{LS}). 
	Define the sets 
	\begin{equation*}
		Q_\pm:=\{u \in \C \setminus K:~ 
		\eta(t,u)\in \pm\,{\rm int}\,P
		\ \text{ for some }\ t\in[0,\tau(u))
		\,\}\cup (\pm\,{\rm int}\,P). 
	\end{equation*}
	It is known that $Q_\pm$ are open subsets of $\C$ invariant for the descending flow $\eta$, and $\partial Q_\pm$ are closed subsets of $\C$ invariant for $\eta$, see \cite[Lemma~2.3]{LS}. 
	
	Choose a constant $c$ satisfying 
	$\max\limits_{s \in [0,1]} J(\gamma(s))<c<0$, 
	where $\gamma$ is the continuous path given by the assumption $(A2)$ of the theorem. 
	Since $\gamma(0)\in Q_+$, $\gamma(1)\in Q_-$, and $Q_\pm$ are open in $\C$, 
	there exist  $0<s_+ \leq s_-<1$ such that 
	$\gamma(s_+)\in \partial Q_+$ and $\gamma(s_-)\in \partial Q_-$. 
	Put $u_1:=\gamma(0)$, $u_2:=\gamma(1)$, and $u_3:=\gamma(s_+)$. 
	Due to the assertion (i) of Lemma~\ref{pgvf}, we know that
	$$
	\frac{d}{dt} J(\eta(t,u_i))=-\left< J'(\eta(t,u_i)), 
	\eta(t,u_i)-V_\lambda(\eta(t,u_i))\right> \leq 0, \quad i=1,2,3,
	$$ 
	which implies that 	$-\infty< \inf_{\W} J \leq J(\eta(t,u_i)) \leq c<0$ for every $t\in[0,\tau(u_i))$. Hence, the coercivity of $J$ 
	guarantees the existence of $R>0$ such that for all $t\in[0,\tau(u_i))$ we have
	\begin{equation}\label{thm-3-1}
		\|\eta(t,u_i)\| \leq R
		\quad  \text{and} \quad 
		\|B_\lambda(\eta(t,u_i))\| \leq R.
	\end{equation} 
	Therefore, if $\tau(u_i)<\infty$ for $i=1,2,3$, then for every $0<t_1<t_2<\tau(u_i)<\infty$ we have 
	\begin{align*}
		\|\eta(t_1,u_i)-\eta(t_2,u_i)\| &\le 
		\int_{t_1}^{t_2}\|\eta(s,u_i)-V_\lambda(\eta(s,u_i))\|\,ds \\
		& \le 2 \int_{t_1}^{t_2}\|\eta(s,u_i)-B_\lambda(\eta(s,u_i))\|\,ds 
		\le 4R (t_2-t_1)
	\end{align*}
	by the assertion (i) of Lemma~\ref{pgvf} and \eqref{thm-3-1}. 
	Thus, $\eta(t,u_i)$ converges to some $w_i$ in $\W$ 
	as $t \to \tau(u_i)-0$ whenever $\tau(u_i)<\infty$. 
	On account of Lemma~\ref{pgvf} and \cite[Lemma~18 (ii)]{MT2}, it is not hard to prove that $w_i\in K$ and 
	$\eta(t,u_i)$ converges to $w_i$ in $\C$ as $t \to \tau(u_i)-0$. 
	Recalling now that $Q_\pm$ and $\partial Q_\pm$ are invariant, we see that $J(w_i) \leq J(u_i) \leq c<0$, $i=1,2,3$, and $w_1 \in {\rm int}\,P$, $w_2 \in -{\rm int}\,P$,  $w_3 \in \partial Q_+$. 
	Since $\partial Q_+\cap (\pm P\setminus\{0\})=\emptyset$ 
	(note that $\pm P\setminus\{0\} \subset Q_\pm$), our conclusion is proved provided $\tau(u_i)<\infty$ for $i=1,2,3$. 
	
	Assume that $\tau(u_i)=\infty$ for some $i \in \{1,2,3\}$. 
	In this case, we can prove the existence of a sequence $\{t_n\}_{n \in \mathbb{N}} \subset \mathbb{R}^+$ such that 
	\begin{equation}\label{claim}
		t_n \to +\infty 
		\quad \text{and} \quad J'(\eta(t_n,u_i)) \to 0
		\quad  \text{in} \quad (\W)^* 
		\quad \text{as} \quad 
		n \to +\infty. 
	\end{equation}
	Note that 
	there exists a sequence $\{t_n\}_{n \in \mathbb{N}} \subset \mathbb{R}^+$ 
	such that $t_n \to +\infty$ and $\frac{d}{dt} J(\eta(t_n,u_i)) \to 0$ 
	as $n \to +\infty$, since $-\infty < \inf_{\W} J \leq J(\eta(t,u_i)) \le c$ for all $t \geq 0$ and $J(\eta(t,u_i))$ is nondecreasing in $t$. 
	Let us show that this sequence satisfies \eqref{claim}. If $1 < q < p \leq 2$, then Lemma~\ref{B:estimate} (iv),  Lemma~\ref{pgvf} (i), and \eqref{thm-3-1} imply
	\begin{align*} 
		-\frac{d}{dt} J(\eta(t,u_i)) 
		& \ge \frac{d_1}{2}\dfrac{\|\eta(t,u_i)-B_\lambda(\eta(t,u_i))\|^2}
		{(\|\eta(t,u_i)\|+\|B_\lambda(\eta(t,u_i))\|)^{2-p}
			+(\|\eta(t,u_i)\|+\|B_\lambda(\eta(t,u_i))\|)^{2-q}}
		\\[2mm]
		&\ge \frac{d_1}{2}\dfrac{\|\eta(t,u_i)-B_\lambda(\eta(t,u_i))\|^2}
		{(2R)^{2-p}+(2R)^{2-q}}
		\\[2mm] 
		&\ge \dfrac{d_1}
		{2d_4^{2/(q-1)}(1+(2R)^{p-q})^{2/(q-1)}\{(2R)^{2-p}+(2R)^{2-q}\}}
		\|J'(\eta(t,u_i))\|_{(\W)^*}^{2/(q-1)}
	\end{align*} 
	for all $t>0$. Hence $\|J'(\eta(t_n,u_i))\|_{(\W)^*} \to 0$ 
	as $n \to +\infty$. 
	The cases $2 \leq q < p$ and $1 < q \leq 2 \leq p$ can be handled in a similar way using the estimates of Lemma~\ref{B:estimate} and  Lemma~\ref{pgvf} (i). 
	
	Combining now \eqref{claim} with \eqref{thm-3-1}, we conclude that 
	$\{\eta(t_n,u_i)\}_{n \in \mathbb{N}}$ is a bounded Palais--Smale sequence to $J$. 
	At the same time, it is not hard to show that $J$ satisfies the Palais--Smale condition because the coercivity of $J$ implies the boundedness of any Palais--Smale sequence (see Lemma~\ref{PS}). 	
	Thus, there exists $w_i\in \W\cap K$ such that 
	$\lim\limits_{n \to +\infty}\eta(t_n,u_i)=w_i$ in $\W$, up to an appropriate subsequence. 
	Furthermore, arguing as in the proof of \cite[Lemma~18 (iii)]{MT2}, using Lemma~\ref{pgvf} (iii)-(vi) 
	and \eqref{thm-3-1}, 
	we see that $\{\eta(t,u_i):~ t \geq 0\}$ is bounded in 
	$C^{1,\nu}_0(\overline{\Omega})$ for some $\nu \in (0,1)$. 
	Thus, the compactness of 
	$C^{1,\nu}_0(\overline{\Omega})\hookrightarrow \C$ and 
	$\lim\limits_{n \to +\infty}\eta(t_n,u_i)=w_i$ in $\W$ imply that $\lim\limits_{n \to +\infty}\eta(t_n,u_i)=w_i$ in $\C$.
	Therefore, $w_1 \in {\rm int}\,P$, 	$w_2 \in -{\rm int}\,P$ and $w_3 \in \C\setminus (P\cup -P)$. 
\end{proof*}


\end{document}